\documentclass{article}


\usepackage[final]{neurips_2021}




\usepackage[utf8]{inputenc} 
\usepackage[T1]{fontenc}    
\usepackage{url}            
\usepackage{booktabs}       
\usepackage{amsfonts}       
\usepackage{nicefrac}       
\usepackage{microtype}      
\usepackage{xcolor}         
\usepackage{subfigure}

\usepackage[inline]{enumitem}
\usepackage{amssymb,amsmath,amsthm, mathtools}
\usepackage{cases}
\usepackage{algorithm}
\usepackage{algorithmic}
\usepackage{graphicx}
\usepackage{dblfloatfix}
\usepackage{float}
\theoremstyle{plain}
\usepackage{makecell}
\usepackage{bm}
\usepackage{setspace}
\usepackage{thmtools}
\usepackage{thm-restate}
\usepackage{cases}
\usepackage{empheq}

\definecolor{dark-gray}{gray}{0.3}
\definecolor{dkgray}{rgb}{.4,.4,.4}
\definecolor{dkblue}{rgb}{0,0,.5}
\definecolor{medblue}{rgb}{0,0,.75}
\definecolor{rust}{rgb}{0.5,0.1,0.1}
\definecolor{darkblue}{rgb}{0,0.08,0.45}

\usepackage[colorlinks]{hyperref}
\hypersetup{urlcolor=rust}
\hypersetup{citecolor=darkblue}
\hypersetup{linkcolor=blue}


\newcommand{\xopt}{x^*}
\newcommand{\yopt}{y^*}
\newcommand{\X}{\mathcalorigin{X}}
\newcommand{\Y}{\mathcalorigin{Y}}
\newcommand{\G}{\mathcalorigin{G}}

\newcommand{\R}{\mathbb{R}}

\newcommand{\norm}[1]{\left\lVert \, #1 \, \right\rVert}
\newcommand{\normsqr}[1]{\norm{#1}^2}

\DeclareMathOperator{\prox}{prox}

\DeclareMathOperator*{\argmin}{arg\,min}


\numberwithin{theorem}{section}

\numberwithin{lemma}{section}

\newtheorem{remark}{Remark}
\numberwithin{remark}{section}

\numberwithin{corollary}{section}

\numberwithin{definition}{section}

\newtheorem{assumption}{Assumption}
\numberwithin{assumption}{section}

\declaretheorem[name=Theorem,numberwithin=section]{thm-rest}
\declaretheorem[name=Lemma,numberwithin=section]{lem-rest}
\declaretheorem[name=Corollary,numberwithin=section]{cor-rest}
\declaretheorem[name=Definition,numberwithin=section]{def-rest}
\declaretheorem[name=Proposition,numberwithin=section]{prop-rest}
\declaretheorem[name=Assumption,numberwithin=section]{ass-rest}

\allowdisplaybreaks
\def\nn{{ \nonumber }}
\DeclareMathAlphabet{\mathcalorigin}{OMS}{cmsy}{m}{n}

\usepackage[colorinlistoftodos,prependcaption,backgroundcolor=black!5!white,bordercolor=red]{todonotes}

\title{A first-order primal-dual method with adaptivity to local smoothness}

%

\author{%
  Maria-Luiza Vladarean$^\star$~~~~~~~~~~~Yura Malitsky$^\dagger$~~~~~~~~~~~Volkan Cevher$^\star$  \\
  \\
\texttt{\{maria-luiza.vladarean, volkan.cevher\}}@epfl.ch \\
\texttt{yurii.malitskyi@liu.se} \\
\\
$^\star$ LIONS, \'Ecole Polytechnique F\'ed\'erale de Lausanne, Switzerland\\
$^\dagger$ Link\"oping University, Sweden\\
}
\begin{document}

\maketitle

\begin{abstract}
We consider the problem of finding a saddle point for the convex-concave objective $\min_x \max_y f(x) + \langle Ax, y\rangle - g^*(y)$, where $f$ is a convex function with locally Lipschitz gradient and $g$ is convex and possibly non-smooth. We propose an adaptive version of the Condat-V\~{u} algorithm, which alternates between primal gradient steps and dual proximal steps. The method achieves stepsize adaptivity through a simple rule involving $\|A\|$ and the norm of recently computed gradients of $f$. Under standard assumptions, we prove an $\mathcal{O}(k^{-1})$ ergodic convergence rate. Furthermore, when $f$ is also locally strongly convex and $A$ has full row rank we show that our method converges with a linear rate. Numerical experiments are provided for illustrating the practical performance of the algorithm.
\end{abstract}

\section{Introduction}
\label{sec:intro}

In this paper we study a particular instance of the composite minimization problem
\begin{equation}
    \min_{x \in \X} f(x) + g(Ax) \label{eq:orig-prob-form},
\end{equation}
where $f$ and $g$ are convex, proper and lower-semicontinuous (l.s.c.), and $A$ is a linear operator.

Problems of the form~\eqref{eq:orig-prob-form} have been studied in the literature under various assumptions on $f$ and $g$. For the particular instances where $g \circ A$ is proximal-friendly and $f$ is $L$-smooth, the objective is suitable for applying forward-backward splitting algorithms like the Proximal Gradient algorithm and its accelerated counterpart~\citep{nesterov2013gradient, beck2009fast}. In general, however, the  proximal operator of $g \circ A$ is  not easily computable and in such cases a popular approach is  to decouple $A$ and $g$ by reformulating problem~\eqref{eq:orig-prob-form} as a convex-concave saddle-point problem: 
\begin{equation}
\label{eq:prob-initial}
\min_{x \in \X} \max_{y \in \Y} \langle Ax, y\rangle + f(x) - g^*(y),
\end{equation}
where $g^*$ denotes the Fenchel conjugate of $g$. Objective~\eqref{eq:prob-initial} is typically addressed by primal-dual splitting algorithms which, under strong duality, can recover the solution to the original problem~\eqref{eq:orig-prob-form}. In the particular case when $f$ and $g$ are proximal-friendly and possibly non-smooth, a very popular method is the Primal-Dual Hybrid Gradient proposed in~\citep{chambolle2011first}, which was further extended to handle an additional $L$-smooth component with the Condat-V\~{u} algorithm~\citep{condat2013primal,vu2013splitting}. Convergence rates for the latter are studied in~\citep{chambolle2016ergodic}. 

Together, these classes of algorithms cover a broad range of problems in diverse fields such as signal processing, machine learning, inverse problems, telecommunications and many others. As a result, a great amount of research effort has gone into addressing practical concerns such as robustness to inexact oracles, acceleration and automation of stepsize selection. For a comprehensive list of examples and theoretical details we refer the reader to review papers  \citep{combettes2011proximal, parikh2014proximal, komodakis2015playing, chambolle2016introduction}. In this work, we focus on the line of investigation studying stepsize regime automation for primal-dual algorithms targeting problem~\eqref{eq:prob-initial}.

In their basic form, primal-dual methods require as input stepsize parameters belonging to a designated interval of stability, which depends on problem specific constants like the global smoothness parameter $L$ and $\norm{A}$. Dependence on such constants is undesirable because they may be costly to compute and oftentimes one can only access upper-bound estimates, thus leading to overly-conservative stepsizes and slower convergence. Moreover, the need to know $L$ for setting the stepsizes prevents these methods from being applied to functions which are not globally smooth.

Consequently, recent efforts have gone towards devising methods with adaptive stepsizes~\citep{goldstein2013adaptive, goldstein2015adaptive, malitsky2018pdlinesearch, pedregosa2018adaptive}. These approaches resort to linesearch for finding good stepsizes at every iteration, and exhibit improved practical performance. It thus appears that better convergence comes at the cost of an indeterminate number of extra steps spent in subprocedures aimed at finding appropriate stepsizes. 

In this work, we study problem~\eqref{eq:prob-initial} under the assumption that $\nabla f$ is locally Lipschitz continuous and $g$ is proximal-friendly. To illustrate the motivation of our framework, we take a prototypical example in image processing:
\begin{equation*}
\min_{x} \frac{1}{2}\normsqr{Kx - b} + \lambda \norm{Dx}_{2, 1}, \;\; K \in \R^{m \times d}, D \in \R^{2d \times d},
\end{equation*}
where $x$ is an image, $K$ is a problem-specific measurement operator, $b$ is the vector of (possibly noisy) observations and $D$ is the discrete gradient operator and the regularization term represents the isotropic TV norm. In order to apply any of the aforementioned primal-dual algorithms, one needs to first choose how to decouple the linear operators. There are three options: decoupling with respect to $K$ leaves us with having to compute the proximal operator of the TV norm for the primal step, which is an iterative procedure \citep{chambolle2004algorithm}. Decoupling $D$ implies performing gradient steps on $f$, since in general its proximal operator is not efficient. Finally, decoupling with respect to both implies increasing the dimensionality of the dual variable to $m+2d$, which is problematic for large $d$ and $m$. The sensible choice is the second one (i.e., decoupling $D$), and the question we seek to answer with this work is:
\begin{quote}
\emph{Does there exist a method for solving~\eqref{eq:prob-initial} that adapts to the local problem geometry without resorting to linesearch?}
\end{quote}
Our contribution is to propose a  first-order primal-dual scheme that answers this question in the affirmative and is accompanied by theoretical convergence guarantees. Using standard analysis techniques we show an ergodic convergence of $\mathcal{O}(k^{-1})$ when $\nabla f$ is \emph{locally} Lipschitz and $g$ is proximal-friendly, and a linear convergence rate for the case when $f$ is in addition \emph{locally} strongly convex and $A$ has full row rank. We provide numerical experiments for sparse logistic regression and image inpainting, as well as use our method as a heuristic for TV-regularized nonconvex phase retrieval.

The rest of the paper is structured as follows: Section~\ref{sec:related-work} provides details about related work; Section~\ref{sec:prelim} introduces notation, along with technical preliminaries and assumptions to be used in our analysis; Section~\ref{sec:algss-and-conv} reports the main theoretical results alongside partial proofs; finally, partial numerical results are provided in Section~\ref{sec:exp} with  the rest being deferred to the appendix due to lack of space.

\section{Related Work}
\label{sec:related-work}
\paragraph{Adaptive Gradient Descent (GD) methods.} Arguably the most widespread of optimization methods, GD presents similar shortcomings for setting the stepsize as those described in the previous section. In particular, much research effort has  gone in devising variants  of the algorithm that remove the  need to estimate the global smoothness constant $L$. In a recent work, \citet{malitsky2020adaptive} propose an extremely simple and effective alternative for setting the stepsize $\tau_k$ adaptively at every iteration, as follows:
\begin{equation}
    \tau_k = \min \left\{ \tau_{k-1}\sqrt{1 + \frac{\tau_{k-1}}{\tau_{k-2}}}, \frac{\norm{x_k - x_{k-1}}}{2\norm{\nabla f(x_k) - \nabla f(x_{k-1})}}\right\}. \label{eq:malitsky-stepsize}
  \end{equation}
Adaptivity essentially comes `for free' in~\eqref{eq:malitsky-stepsize}, as it involves solely quantities which have already been computed. Moreover, the method requires only the weaker assumption of local smoothness, thus extending the reach of provably-convergent GD to a wider class of differentiable functions while maintaining the standard $\mathcal{O}(k^{-1})$ convergence rate.

In this work we show that the above technique can be extended to the analysis of primal-dual methods, where it gives rise to an algorithm whose stepsizes adapt to the local geometry of the objective's (locally) smooth component $f$.

\paragraph{Adaptive monotone variational inequality (VI) methods.} \cite{malitsky2019golden} proposes an algorithm for solving monotone VIs with a stepsize that adapts to local smoothness similarly to~\eqref{eq:malitsky-stepsize}. This method solves the very general formulation of finding $u^* \text{ such that } \langle F(u^*), u - u^*\rangle + h(u) - h(u^*) \geq 0, \, \forall u$ for a given monotone operator $F$ which is locally Lipschitz continuous. Our template~\eqref{eq:prob-initial} can be recovered from theirs by setting $u = (x, y)$, with 
    \begin{align*}
        F(u) = F(x, y) = \begin{bmatrix}
           \nabla f(x) + A^Ty \\
           -Ax
         \end{bmatrix},
    \end{align*}
    and $h(u) = g^*(y)$. The advantages of this approach are the relaxed requirement of local Lipschitz continuity for $F$ and the fact that knowledge of $\lVert A \rVert$ is not required. However, since the VI framework is very general and does not take advantage of the problem structure (e.g. the fact that $\langle Ax,y\rangle$ is a bilinear term), the method comes with worse convergence bounds than algorithms specifically designed to solve~\eqref{eq:prob-initial}. In addition, the algorithm requires as input an upper bound on the stepsizes, despite them being set in accordance to the estimated local smoothness. 
    
\paragraph{First order primal dual algorithms and adaptive versions.} 
A popular method for solving~\eqref{eq:prob-initial} when $f$ is $L$-smooth is the Condat-V\~{u} algorithm (CVA)~\citep{condat2013primal, vu2013splitting}. The method's convergence is subject to a global stepsize validity condition given by $\left( \frac{1}{\tau} - L \right)\frac{1}{\sigma} \geq \normsqr{A}$, where $\tau$ and $\sigma$ are the primal and dual stepsizes, respectively. 

Another approach to solving problem~\eqref{eq:prob-initial} is via the Primal–Dual Fixed-Point algorithm based on the Proximity Operator (PDFP$^2$O) or the Proximal Alternating Predictor–Corrector (PAPC) methods~\citep{loris2011generalization, chen2013primal, drori2015simple}. This approach comes with less restrictive stepsize conditions than CVA owing to a different iteration style, but which nevertheless depend on the global smoothness constant $L$ and $\norm{A}$ and have to be carefully chosen. 

In order to alleviate the burden of choosing the stepsize parameters in CVA, \citet{malitsky2018pdlinesearch} propose a linesearch procedure involving only dual variable updates and which, for certain problems such as regularized least squares, does not require any additional matrix-vector
multiplications. A characteristic of this algorithm is that it maintains a constant ratio between primal and dual stepsizes through a hyperparameter $\beta$ --- a setup which we also use in this work.

\section{Preliminaries}
\label{sec:prelim}
Consider problem \eqref{eq:prob-initial} and let $\X, \Y$ be finite dimensional real vector spaces equipped with the standard inner product  $\langle\cdot, \cdot\rangle$ and the associated Euclidean norm $\norm{\cdot} = \sqrt{\langle\cdot, \cdot\rangle}$. We denote by $g^*$ the Fenchel conjugate of $g$ in \eqref{eq:orig-prob-form} defined as $g^*(y) = \sup_{x} \{ \langle x, y\rangle - g(x)\}$. In order to not overload the $^*$ notation, we use $A^T$ to denote the adjoint operator of $A$. 

One can easily see that ~\eqref{eq:prob-initial} is a primal-dual formulation of the following primal and dual optimization problems, of which the former is the same as ~\eqref{eq:orig-prob-form}:
\begin{equation*}
\min_{x\in \X} f(x) + g(Ax), \qquad \max_{y\in \Y} - (f^*(-A^Ty) + g^*(y)).
\end{equation*}
A saddle-point $(\xopt, \yopt) \in \X \times \Y$ of problem ~\eqref{eq:prob-initial} satisfies the following optimality conditions:
\begin{equation}
- A^T \yopt =  \nabla f(\xopt), \qquad A\xopt  \in \partial g^*(\yopt). \label{eq:xy-optimality}
\end{equation}

For $(x^\prime, y^\prime) \in \X \times \Y$ we define the following quantities:
\begin{align*}
&P_{x^\prime, y^\prime}(x) := f(x) - f(x^\prime) + \langle x - x^\prime, A^T y^\prime \rangle,\\
&D_{x^\prime, y^\prime}(y) := g^*(y) - g^*(y^\prime) - \langle A x^\prime,y - y^\prime \rangle, \\
&\G_{x^\prime, y^\prime}(x, y) := P_{x^\prime, y^\prime}(x) + D_{x^\prime, y^\prime}(y).
\end{align*}
These functions are convex for fixed $(x^\prime, y^\prime)$ and whenever $(x^\prime, y^\prime) = (\xopt, \yopt)$, it holds that $P_{\xopt, \yopt}(x) \geq 0$, $D_{\xopt, \yopt}(x) \geq 0$ and $\G_{\xopt, \yopt}(x, y) \geq 0$, with the latter quantity representing the primal-dual gap. We also define the gap restricted to a bounded subset $B_1 \times B_2 \subset \X \times \Y$ as:
\begin{equation*}
\G_{B_1 \times B_2}(x, y) := \sup\limits_{(x^\prime, y^\prime) \in B_1 \times B_2} P_{x^\prime, y^\prime}(x) + D_{x^\prime, y^\prime}(y),
\end{equation*}
and note that it is non-negative whenever $B_1 \times B_2$ contains a saddle-point.

Given a function $f: \X \to \R$ and $L > 0$, we say that $f$ is $L$-smooth if its gradient $\nabla f$ is Lipschitz continuous: $\|\nabla f(x) - \nabla f(y)\| \leq L \|{x - y}\|,  \forall x, y$. Furthermore, $f$ is locally smooth if $\nabla f$ is Lipschitz continuous on any compact subset $\mathcal{C}$: $\forall \mathcal{C} \subset \X, \, \exists L_{\mathcal{C}} > 0 \text{ such that } \|\nabla f(x) - \nabla f(y)\| \leq  L_{\mathcal{C}} \|{x - y}\|,  \forall x, y \in \mathcal{C}$. 

We also say that $f$ is $\mu$-strongly convex if $f(y) \geq f(x) + \langle \nabla f(x), y - x\rangle + \frac{\mu}{2}\normsqr{x-y}, \forall x, y$. Similarly, $f$ is locally strongly convex if it is strongly convex on any compact subset $\mathcal{C}$: $\forall \mathcal{C} \subset \X, \, \exists \mu_{\mathcal{C}} > 0 \text{ such that } f(y) \geq f(x) + \langle \nabla f(x), y - x\rangle + \frac{\mu_{\mathcal{C}}}{2}\normsqr{x-y}, \forall x, y \in \mathcal{C}$.
 
We define the proximal operator of a convex function $g: \X \to \R \cup \{\infty\}$ as $\prox_{g}(x)=\argmin_{z}\left\{g(z) + \frac{1}{2}\normsqr{x-z}\right\}$, and say that $g$ is `proximal-friendly' if $\prox_{ g}(x) $ has a closed form solution or can be efficiently computed to high accuracy. 

Finally, the following two blanket assumptions will hold throughout the paper:
\begin{assumption}
\label{ass:regularity}
Function $f$ is convex and locally smooth, while $g$ convex, l.s.c., and proximal-friendly. 
\end{assumption} 

\begin{assumption}
\label{ass:constr-qual}
A saddle-point exists for problem ~\eqref{eq:prob-initial} and thus strong duality holds.
\end{assumption}
We note that Assumption~\ref{ass:constr-qual} is standard in the literature (see e.g., \citep{chambolle2011first}). Assumption~\ref{ass:regularity}, on the other hand, is weaker than the usual global $L$-smoothness premise and thus enlarges the category of admissible functions $f$ with instances such as $x \mapsto\exp(x)$. To illustrate, consider the aforementioned function defined on the reals: the global smoothness assumption clearly does not hold, however for any fixed interval $[a, b] \subset \R$ the smoothness constant can be chosen as $\exp(b)$.

For showing linear convergence of  our method, we will add the following  assumption:
\begin{assumption}
\label{ass:local-strong-convexity}
Function $f$ is locally strongly convex and operator $A$ has full row-rank.
\end{assumption}


\section{Algorithm and convergence}
\label{sec:algss-and-conv}
The primal-dual method proposed for solving problem~\eqref{eq:prob-initial} under assumptions ~\ref{ass:regularity} and \ref{ass:constr-qual}, is provided in Algorithm~\ref{alg:ad_prim_dual_3} under the abbreviation APDA, which we use from here onwards.
APDA follows the same structure as the basic CVA~\citep{chambolle2016ergodic} for the given assumptions. Notice that if we restrict Assumption~\ref{ass:regularity} to $L$-smooth functions $f$, we can in fact recover CVA by setting $\theta_k = \theta = 1$  and $\tau_k = \tau$, $\sigma_k = \sigma$ fixed such that $\left( \frac{1}{\tau} - L \right)\frac{1}{\sigma} \geq \normsqr{A}$.
\begin{algorithm}[H]
\setstretch{1.3}
\begin{algorithmic}

\STATE \textbf{Input:} $x_0 \in \X, y_0 \in \Y, \tau_{\text{init}} > 0, \tau_0 = \infty, \theta_0 = 1$, $\beta > 0$, $c \in(0, 1)$

\STATE $x_1 = x_0  - \tau_{\text{init}}( \nabla f(x_0) + A^Ty_{0})$
\FOR{$k = 1, 2, \dots $}
\STATE Set $\tau_k =  \min \left\{ \frac{1}{2\sqrt{L_k^2 + (\beta/(1-c))\normsqr{A}}}, \tau_{k-1}\sqrt{1+\theta_{k-1}} 
\right\}$, $\sigma_k = \beta\tau_k$, $\theta_k = \frac{\tau_k}{\tau_{k-1}}$
\STATE $\tilde{x}_{k} = x_{k} + \theta_k (x_{k} - x_{k-1})$
\STATE $y_{k+1} = \prox_{\sigma_kg^*}(y_{k} + \sigma_kA \tilde{x}_{k})$
\STATE $x_{k+1} = x_k  - \tau_k( \nabla f(x_k) + A^Ty_{k+1}) $
\ENDFOR
\end{algorithmic}
\caption{Adaptive Primal Dual Algorithm (APDA)}
\label{alg:ad_prim_dual_3}
\end{algorithm}

\subsection{High level ideas}
\label{subsec:aboutPDHG}

We can rephrase the global stepsize condition of CVA by introducing a free parameter $\beta > 0 $ which represents the ratio between the fixed dual and  primal stepsizes: $\beta = \frac{\sigma}{\tau}$. With this change of variables, the stepsize validity condition becomes $\tau \in  \left(0, \frac{2}{L + \sqrt{L^2 + 4\beta\normsqr{A}}} \right)$. 

Our algorithm disposes of CVA's global condition and relies instead on a very similar but \emph{local} criterion given by $\tau_k \in \left(0, \frac{1}{L_k + \sqrt{L_k^2 + 2\beta\normsqr{A}}} \right)$, where $L_k :=  \frac{\norm{\nabla f(x_k) - \nabla f(x_{k-1})}}{\norm{x_k - x_{k-1}} }$ provides an estimate  of the local smoothness constant and $\beta = \frac{\sigma_k}{\tau_k}$. In particular, this requirement is satisfied by the first part of the expression defining $\tau_k$ in APDA:
\begin{equation}
\tau_k = \min \left\{ \frac{1}{2\sqrt{L_k^2 + (\beta/(1-c))\normsqr{A}}}, \tau_{k-1}\sqrt{1+\theta_{k-1}} 
\right\} \label{eq:tau-stepsize-apda}
\end{equation}
where $c\in(0, 1)$. Intuitively, this rule demands that $\tau_k$ does not overstep a constant related to the local curvature, thus allowing for larger stepsizes in flatter regions and correspondingly smaller ones otherwise. 

By itself, the first term of~\eqref{eq:tau-stepsize-apda} does not ensure convergence, since overly-aggressive and possibly destabilizing stepsizes might occur in near-linear regions. This issue is addressed by the second part of the expression~\eqref{eq:tau-stepsize-apda}  which, informally, prevents the stepsize from increasing `too fast' in consecutive iterations. Specifically, the increase factor is at most $\sqrt{1 + \theta_{k-1}}$, where $\theta_k = \frac{\tau_{k-1}}{\tau_{k-2}}$.

Under these two local stepsize conditions we are able to show APDA's convergence using the weaker assumption of local smoothness of $f$, thus conveniently removing the need of estimating a global smoothness constant $L$. 

\begin{remark}
While $\tau_k$ does not adapt to $\norm{A}$, for many practical problems this fact is not a big hindrance. Function $f$ typically represents the data fidelity term, whose smoothness constant $L$ (should it exist) can far exceed $\norm{A}$ -- the linear operator enforcing structured regularization on $x$. A specific example are TV-regularized imaging problems, where $A$ is the discrete gradient operator whose norm is bounded by $\sqrt{8}$ \citep{chambolle2004algorithm}, while the data fidelity term may involve a very large number of measurements and a larger norm, consequently.
\end{remark}

\begin{remark}
APDA takes an additional primal step prior to the for-loop, which is controlled by $\tau_{\text{init}}$ given as input. This is needed for estimating $L_1$ in the first iteration. In practice we set $\tau_{\text{init}} = \texttt{1e-9}$, a sufficiently small value to ensure that $x_1$ does not depart too far from $x_0$ and yield a good estimate of $L_1$. Furthermore, the setting of $\tau_0 = \infty$ simply ensures that in the first step, $\tau_1 = \frac{1}{2\sqrt{L_1^2 + (\beta/(1-c))\normsqr{A}}}$ and has no impact on further steps. Finally, in our experiments we set $c =\texttt{1e-15}$ -- this is a parameter introduced for theoretical purposes as explained in the  following section.
\end{remark}

\subsection{Analysis -- the base case}  

In short, the main steps of our analysis are: first, we establish the inequality that characterizes the dynamics of APDA given in Lemma~\ref{lem:recurrence-adaptive2} below. Based on it, we are able to prove the boundedness of sequences $\{x_k\}$ and $\{y_k\}$ in Theorem~\ref{thm:conv-base-case}. In turn, sequence boundedness alongside the local smoothness property of $f$ allows us to conclude that there exists a constant $L > 0$ such that $f$ is $L$-smooth on the compact set $\overline{\text{Conv}}(\{\xopt, x_0, x_1, \ldots \})$ -- the closed convex hull generated by $\{\xopt, x_0, x_1, \ldots \}$. Finally, we leverage this information to show that $(x_k, y_k)$ converges to a saddle point of \eqref{eq:prob-initial} and derive the associated ergodic convergence rates presented in Theorem~\ref{thm:conv-base-case}.

\begin{restatable}{lem-rest}{LemEnergy}
\label{lem:recurrence-adaptive2}
Consider APDA along with Assumptions~\ref{ass:regularity} and \ref{ass:constr-qual} and $(x, y) \in \X \times \Y$. Then, for all $k$ and $ \eta_k \in \left(\frac{\beta\tau_k\norm{A}}{1-c},\;  \frac{1 - 2\tau_kL_k}{2\tau_k\norm{A}} \right)$,
\begin{align*}
\normsqr{x_{k+1} - x} + \frac{1}{\beta} \normsqr{ y_{k+1} - y} + \left(1 -\eta_k\tau_k\norm{A} -\tau_k L_k \right)\normsqr{x_{k+1} - x_k} \\
&\hspace{-90mm}+ \frac{\eta_k- \tau_k\beta\norm{A}}{\beta\eta_k}\normsqr{y_{k+1} - y_k}  + 2\tau_k(1+\theta_k)  P_{x, y}(x_{k}) + 2\tau_kD_{x, y}(y_{k+1})\\[2mm]
&\hspace{-80mm}\leq \normsqr{x_{k} - x} +  \frac{1}{\beta}\normsqr{y_k - y} + \tau_kL_k\normsqr{x_k - x_{k-1}} +2\tau_k\theta_k P_{x, y}(x_{k-1}). 
\end{align*}
Moreover, it holds that:
\begin{enumerate}[label={\arabic*)}]
    \item $\tau_kL_k < \frac{1}{2} < 1 -\eta_k\tau_k\norm{A} -\tau_k L_k,$
    \item $\frac{1}{\beta}  - \frac{\tau_k\norm{A}}{\eta_k} > \frac{c}{\beta} > 0.$
\end{enumerate}
\end{restatable}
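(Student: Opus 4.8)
My plan is to obtain the recurrence by deriving two elementary one-step estimates — one for the dual proximal update and one for the primal gradient update — summing them, and then carefully processing the single bilinear coupling term that survives. For the dual step, I use that $y_{k+1}$ minimizes the $\tfrac{1}{\sigma_k}$-strongly convex map $z \mapsto g^*(z) + \tfrac{1}{2\sigma_k}\normsqr{z - y_k - \sigma_k A\tilde{x}_k}$. The associated prox inequality, after expanding the squared terms, substituting $\sigma_k = \beta\tau_k$, dividing by $\beta$, and recognizing the definition of $D_{x,y}$, should give
\[
2\tau_k D_{x,y}(y_{k+1}) + \tfrac1\beta\normsqr{y_{k+1}-y} + \tfrac1\beta\normsqr{y_{k+1}-y_k} \le \tfrac1\beta\normsqr{y_k-y} + 2\tau_k\dotprod{A(\tilde{x}_k-x)}{y_{k+1}-y}.
\]
For the primal step, I expand $\normsqr{x_{k+1}-x}$ via $x_{k+1}-x_k = -\tau_k(\nabla f(x_k)+A^Ty_{k+1})$, invoke convexity of $f$ to lower-bound $\dotprod{\nabla f(x_k)}{x_k-x}$ by $f(x_k)-f(x)$, and collect the bilinear part into $P_{x,y}(x_k)$, yielding
\[
\normsqr{x_{k+1}-x} + 2\tau_k P_{x,y}(x_k) \le \normsqr{x_k-x} + \normsqr{x_{k+1}-x_k} - 2\tau_k\dotprod{A(x_k-x)}{y_{k+1}-y}.
\]

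Adding the two estimates, the two bilinear terms merge, using $\tilde{x}_k - x_k = \theta_k(x_k-x_{k-1})$, into the single coupling term $C := 2\tau_k\theta_k\dotprod{A(x_k-x_{k-1})}{y_{k+1}-y}$. Processing $C$ is the heart of the argument, and I expect it to split into three contributions. First, isolating the component along the fixed $y$ and using the identity $\dotprod{A(x_k-x_{k-1})}{y} = P_{x,y}(x_k) - P_{x,y}(x_{k-1}) - \bigl(f(x_k)-f(x_{k-1})\bigr)$ produces the shifted primal-gap terms: moving the resulting $-2\tau_k\theta_k\bigl(P_{x,y}(x_k)-P_{x,y}(x_{k-1})\bigr)$ to the left turns the coefficient of $P_{x,y}(x_k)$ into $2\tau_k(1+\theta_k)$ and leaves $2\tau_k\theta_k P_{x,y}(x_{k-1})$ on the right, exactly as stated. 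Second, the leftover curvature term of the form $f(x_k)-f(x_{k-1}) - \dotprod{\nabla f(\cdot)}{x_k-x_{k-1}}$ is controlled via convexity together with Cauchy--Schwarz and the definition $L_k = \norm{\nabla f(x_k)-\nabla f(x_{k-1})}/\norm{x_k-x_{k-1}}$, which is the mechanism that generates the $\tau_k L_k\normsqr{\cdot}$ terms in the current and previous increments. Third, the remaining cross term against $y_{k+1}-y_k$ is bounded by Young's inequality with the free parameter $\eta_k$, i.e. $2\tau_k\dotprod{A u}{v} \le \tau_k\norm{A}\bigl(\eta_k\normsqr{u} + \tfrac1{\eta_k}\normsqr{v}\bigr)$, which produces precisely the coefficients $\eta_k\tau_k\norm{A}$ and $\tau_k\norm{A}/\eta_k$ appearing in the coefficient of $\normsqr{x_{k+1}-x_k}$ and $\normsqr{y_{k+1}-y_k}$, respectively.

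Once the main inequality is assembled, the two auxiliary claims follow directly. For claim 1), the first branch of the stepsize rule gives $\tau_k^2\bigl(L_k^2 + \tfrac{\beta}{1-c}\normsqr{A}\bigr) \le \tfrac14$, hence $\tau_k L_k < \tfrac12$ since the $\normsqr{A}$ contribution is strictly positive; and the upper endpoint $\eta_k < \tfrac{1-2\tau_k L_k}{2\tau_k\norm{A}}$ yields $\eta_k\tau_k\norm{A} + \tau_k L_k < \tfrac12$, i.e. $1 - \eta_k\tau_k\norm{A} - \tau_k L_k > \tfrac12$. For claim 2), the lower endpoint $\eta_k > \tfrac{\beta\tau_k\norm{A}}{1-c}$ is equivalent to $\tfrac{\tau_k\norm{A}}{\eta_k} < \tfrac{1-c}{\beta}$, which gives $\tfrac1\beta - \tfrac{\tau_k\norm{A}}{\eta_k} > \tfrac{c}{\beta} > 0$; I would also verify en route that the stated interval for $\eta_k$ is nonempty, again from the first branch of the $\tau_k$ rule.

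The main obstacle I anticipate is the exact bookkeeping of the bilinear coupling $C$. The delicate point is that the extrapolation feeds the \emph{past} increment $x_k-x_{k-1}$ into $C$, whereas the Young step must instead produce the \emph{current} increment $x_{k+1}-x_k$ carrying the $\eta_k\tau_k\norm{A}$ and $\tau_kL_k$ coefficients on the left-hand side. Reconciling these two indices — so that a genuinely positive coefficient $1 - \eta_k\tau_k\norm{A} - \tau_k L_k$ survives on $\normsqr{x_{k+1}-x_k}$ rather than the naive $+1$ left over from the primal expansion — is where the predictor role of $\tilde{x}_k$ and the stepsize growth cap $\tau_k \le \tau_{k-1}\sqrt{1+\theta_{k-1}}$ (equivalently $\theta_k^2 \le 1+\theta_{k-1}$) must be used to complete squares and align the increment coefficients across consecutive iterations. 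Getting this alignment right, without leaving a stray bilinear or mismatched $\theta_k$ factor, is the only step I expect to require genuine care rather than routine manipulation.
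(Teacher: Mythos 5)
Your overall architecture coincides with the paper's: the dual prox estimate you write is exactly the paper's (prox characterization plus cosine rule, $\sigma_k=\beta\tau_k$, divide by $\beta$), the primal expansion with convexity of $f$ is identical, the merge of the two bilinear terms into $C=2\tau_k\theta_k\dotprod{A(x_k-x_{k-1})}{y_{k+1}-y}$ via $\tilde{x}_k-x_k=\theta_k(x_k-x_{k-1})$ is the same cancellation the paper performs, and your endgame for claims 1) and 2) — including checking nonemptiness of the $\eta_k$-interval through the quadratic $\frac{2\beta\normsqr{A}}{1-c}\tau_k^2+2L_k\tau_k-1<0$ and the first branch of the stepsize rule — matches the paper (which verifies $\tau_k\leq\frac{1}{2\sqrt{L_k^2+(\beta/(1-c))\normsqr{A}}}<\tau_{k,2}$ by a strict Jensen inequality). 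However, the central step of the proof is the one you leave unresolved, and the mechanism you gesture at for it would not work.

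Concretely, your three-way split of $C$ cannot produce the stated coefficients: Young's inequality applied \emph{inside} $C$ pairs $\normsqr{y_{k+1}-y_k}$ with the \emph{past} increment, yielding a term $\eta_k\tau_k\norm{A}\,\theta_k^2\normsqr{x_k-x_{k-1}}$ on the right, whereas the lemma demands $\eta_k\tau_k\norm{A}$ on the \emph{current} increment $\normsqr{x_{k+1}-x_k}$ and only $\tau_kL_k$ on the past one — and it leaves the stray $+\normsqr{x_{k+1}-x_k}$ from the primal expansion untouched. The missing idea is the paper's double use of the primal update rule (the Malitsky--Mishchenko expansion, eq.~\eqref{eq:adapt2-step11}): write $\normsqr{x_{k+1}-x_k}=2\tau_k\dotprod{\nabla f(x_k)+A^Ty_{k+1}}{x_k-x_{k+1}}-\normsqr{x_{k+1}-x_k}$, insert $\pm(\nabla f(x_{k-1})+A^Ty_k)$, bound $\dotprod{\nabla f(x_k)-\nabla f(x_{k-1})}{x_k-x_{k+1}}$ by Cauchy--Schwarz, the definition of $L_k$, and Young, and bound $\dotprod{A^T(y_{k+1}-y_k)}{x_k-x_{k+1}}$ by $\eta_k$-Young — both paired against the current increment, which is exactly what flips its sign and places $\eta_k\tau_k\norm{A}+\tau_kL_k$ there with only $\tau_kL_k\normsqr{x_k-x_{k-1}}$ surviving on the right — then replace $\nabla f(x_{k-1})+A^Ty_k$ by $(x_{k-1}-x_k)/\tau_{k-1}$ via the \emph{previous} update, so that $\theta_k=\tau_k/\tau_{k-1}$ appears and convexity yields $2\tau_k\theta_k(f(x_{k-1})-f(x_k))+2\tau_k\theta_k\dotprod{A(x_{k-1}-x_k)}{y_{k+1}}$, which is precisely the leftover of your decomposition of $C$. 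Finally, contrary to your closing guess, the growth cap $\tau_k\leq\tau_{k-1}\sqrt{1+\theta_{k-1}}$ plays no role in this lemma: the recurrence holds for any $\tau_k$ in the valid interval, and the cap is invoked only later, in the proof of Theorem~\ref{thm:conv-base-case}, to guarantee $\tau_{i+1}\theta_{i+1}\leq\tau_i(1+\theta_i)$ when unrolling and $\theta_k\leq 2$; no completion of squares using it would align the increment indices here.
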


\begin{proof}[Proof sketch.] The full proof is deferred to the appendix. We use algebraic manipulations, APDA's update rules, the Cauchy-Schwarz and Young inequalities and properties of the $\prox$ operator to get the recurrence:
\begin{align}
\normsqr{x_{k+1} - x} + \frac{1}{\beta} \normsqr{y_{k+1} - y} + \left(1 -\tau_k\norm{A}\eta_k - \tau_k L_k \right)\normsqr{x_{k+1} - x_k} \nn \\
&\hspace{-100mm}+ \left(\frac{1}{\beta}  - \frac{\tau_k\norm{A}}{\eta_k} \right)\normsqr{y_{k+1} - y_k}  + 2\tau_k(1+\theta_k)  P_{x, y}(x_{k}) + 2\tau_kD_{x, y}(y_{k+1})\nn \\[2mm]
&\hspace{-90mm}\leq \normsqr{x_{k} - x} +  \frac{1}{\beta}\normsqr{y_k - y} + \tau_kL_k\normsqr{x_k - x_{k-1}} +2\tau_k\theta_k P_{x, y}(x_{k-1}) \label{eq:basic-rec},
\end{align}
where $\eta_k > 0$ is a free iteration-dependent constant involved in Young's inequality. 

In order to obtain anything worthwhile we would like to set $\eta_k$ such that, when unrolling~\eqref{eq:basic-rec} over the iterations, the terms containing $\normsqr{x_{k+1} - x_k}$ and $\normsqr{y_{k+1} - y_k}$ accumulate on the LHS with positive coefficients. More precisely, we ask that:
\begin{align}
\label{eq:etak-inequality}
\begin{cases}
\frac{1}{\beta}  - \frac{\tau_k\norm{A}}{\eta_k} > \frac{c}{\beta}, \\[2mm]
1 -\tau_k\norm{A}\eta_k - \tau_k L_k  > \frac{1}{2},
\end{cases}
\end{align}
where $c \in (0, 1)$. We note that the RHS of the first inequality could have been chosen as $0$, however, we made it strictly positive due to technical reasons related to controlling the sequence $\normsqr{y_{k+1} - y_k}$. In practice, we choose $c$ to be as small as possible. 

A similar remark holds for the second inequality, where it would have been sufficient to set its RHS to $\tau_{k+1}L_{k+1}$. Since this would considerably complicate the analysis, we make the observation that $\tau_kL_k < \frac{1}{2}, \, \forall k$ and use this simpler uniform upper-bound instead.

The inequalities~\eqref{eq:etak-inequality} are equivalent to asking that $\eta_k \in \left( \frac{\tau_k\beta\norm{A}}{1-c}, \;\frac{1 -2\tau_kL_k }{2\tau_k\norm{A}}\right)$ and what is left to show is that this is a valid interval i.e., that the left endpoint is strictly smaller than its right counterpart. This condition amounts to solving a quadratic inequality in $\tau_k$, whose solutions lie in the interval $\left(0, \frac{1}{L_k + \sqrt{L_k^2 + 2(\beta/(1-c))\normsqr{A}}} \right)$. The proof is concluded by showing that our choice of $\tau_k$ indeed satisfies this constraint. 
\end{proof}

We are now ready to state the main convergence result in Theorem~\ref{thm:conv-base-case} below, whose full proof is given in the appendix. 
\begin{restatable}{thm-rest}{ThmBaseCase}
\label{thm:conv-base-case}

Consider APDA along with Assumptions~\ref{ass:regularity} and \ref{ass:constr-qual}, and let $(\xopt, \yopt) \in \X \times \Y$ be a saddle point of problem ~\eqref{eq:prob-initial}. Then, for all $k$
\begin{enumerate}[label={\arabic*)}]
    \item \textbf{Boundedness.} The sequence $\{(x_k, y_k)\}$ is bounded. Specifically, for all k,
    \begin{equation*}
        \normsqr{x_k - \xopt} + \normsqr{y_k - \yopt} \leq M,
    \end{equation*}
    where $M := \normsqr{x_{1} - \xopt} +  \frac{1}{\beta}\normsqr{y_1 - \yopt} + \frac{1}{2}\normsqr{x_1 - x_{0}} < \infty$.
    \item \textbf{Convergence to a saddle point.} The sequence $\{(x_k, y_k)\}$ converges to a saddle point of~\eqref{eq:prob-initial}.
    \item \textbf{Ergodic convergence.} Let $\displaystyle S_k := \sum\limits_{i=1}^{k}\tau_i$, $\;\displaystyle X_k := \frac{1}{S_k}\Bigg(\tau_k(1+\theta_k)x_k + \sum\limits_{i=1}^{k-1}\left( \tau_i(1+\theta_i)  - \tau_{i+1}\theta_{i+1}\right)x_i\Bigg)$ and $\displaystyle Y_k := \frac{1}{S_k} \sum\limits_{i=1}^{k} \tau_i y_{i+1}$. Then, for any bounded $B_1 \times B_2 \in \X \times \Y$ and for all $k$,
        \begin{equation*}
            \G_{B_1 \times B_2}(X_k, Y_k) \leq \frac{M(B_1, B_2) \sqrt{L^2 + (\beta/(1-c))\normsqr{A}}}{k},
        \end{equation*}
        where $L$  is the Lipschitz constant of $\nabla f$ over the compact set $\overline{\text{Conv}}(\{x^*, x_0, x_1, \ldots \})$ and $M(B_1, B_2) = \sup_{(x, y) \in B_1 \times B_2} \normsqr{x_{1} - x} +  \frac{1}{\beta}\normsqr{y_1 - x} + \frac{1}{2}\normsqr{x_1 - x_{0}}$.
\end{enumerate}
\end{restatable}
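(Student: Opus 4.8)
The plan is to drive all three claims from a single Lyapunov (energy) sequence attached to the recurrence of Lemma~\ref{lem:recurrence-adaptive2}. Fixing a saddle point $(\xopt,\yopt)$ and setting $(x,y)=(\xopt,\yopt)$ in \eqref{eq:basic-rec} (so that $P_{\xopt,\yopt}(\cdot)\ge 0$ and $D_{\xopt,\yopt}(\cdot)\ge 0$), I would introduce
\begin{equation*}
\Phi_k := \normsqr{x_k-\xopt}+\tfrac{1}{\beta}\normsqr{y_k-\yopt}+\tau_kL_k\normsqr{x_k-x_{k-1}}+2\tau_k\theta_k\,P_{\xopt,\yopt}(x_{k-1}),
\end{equation*}
and rewrite the recurrence as $\Phi_{k+1}+R_k\le\Phi_k$ with $R_k\ge 0$. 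The nonnegativity of $R_k$ hinges on three facts: the growth rule $\tau_{k+1}\le\tau_k\sqrt{1+\theta_k}$ from \eqref{eq:tau-stepsize-apda} yields $\tau_{k+1}\theta_{k+1}=\tau_{k+1}^2/\tau_k\le\tau_k(1+\theta_k)$, keeping the coefficient of $P_{\xopt,\yopt}(x_k)$ nonnegative; property 1) of the lemma gives $\tau_{k+1}L_{k+1}<\tfrac12<1-\eta_k\tau_k\norm{A}-\tau_kL_k$, keeping the coefficient of $\normsqr{x_{k+1}-x_k}$ positive; and property 2) keeps the coefficient of $\normsqr{y_{k+1}-y_k}$ positive. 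Hence $\Phi_k$ is non-increasing, and since $\theta_1=0$ (because $\tau_0=\infty$) and $\tau_1L_1<\tfrac12$, we obtain $\normsqr{x_k-\xopt}+\tfrac1\beta\normsqr{y_k-\yopt}\le\Phi_k\le\Phi_1\le M$, which yields the boundedness claim 1).

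With boundedness in hand, the set $\{\xopt,x_0,x_1,\dots\}$ is bounded, so $\mathcal{C}:=\overline{\text{Conv}}(\{\xopt,x_0,x_1,\dots\})$ is compact and local smoothness (Assumption~\ref{ass:regularity}) furnishes a single $L$ with $\nabla f$ being $L$-Lipschitz on $\mathcal{C}$; in particular $L_k\le L$ for all $k$. Feeding this into \eqref{eq:tau-stepsize-apda} gives the two-sided bound $\tau_{\min}:=\big(2\sqrt{L^2+(\beta/(1-c))\normsqr{A}}\big)^{-1}\le\tau_k\le\big(2\sqrt{\beta/(1-c)}\,\norm{A}\big)^{-1}$, the lower bound by induction from $\tau_k\ge\min\{\tau_{\min},\tau_{k-1}\}$ and $\sqrt{1+\theta_{k-1}}\ge 1$; consequently $\theta_k=\tau_k/\tau_{k-1}$ is bounded as well. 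Since $\tau_{k+1}L_{k+1}\le L/\big(2\sqrt{L^2+(\beta/(1-c))\normsqr{A}}\big)<\tfrac12$ uniformly, the coefficients of $\normsqr{x_{k+1}-x_k}$ and $\normsqr{y_{k+1}-y_k}$ in $R_k$ are bounded below by positive constants, so summing $\Phi_{k+1}+R_k\le\Phi_k$ gives $\sum_k\normsqr{x_{k+1}-x_k}<\infty$ and $\sum_k\normsqr{y_{k+1}-y_k}<\infty$, whence $\norm{x_{k+1}-x_k}\to 0$ and $\norm{y_{k+1}-y_k}\to 0$.

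For the ergodic rate 3), I would instead keep $(x,y)$ arbitrary and sum \eqref{eq:basic-rec} over $i=1,\dots,k$. The squared-distance terms telescope and are dropped as nonnegative, the $\normsqr{x_{i+1}-x_i}$ terms net to nonnegative coefficients exactly as above, the $P_{x,y}$ terms telescope so that $P_{x,y}(x_i)$ carries precisely the weight $w_i$ appearing in $X_k$ (whose weights sum to $S_k$, using $\theta_1=0$), and the $D_{x,y}(y_{i+1})$ terms accumulate with weights $\tau_i$ as in $Y_k$. Discarding the nonnegative leftovers and using $\tau_1L_1<\tfrac12$ yields $\sum_i w_i\,P_{x,y}(x_i)+\sum_i\tau_i\,D_{x,y}(y_{i+1})\le\tfrac12 M(x,y)$, where $M(x,y):=\normsqr{x_1-x}+\tfrac1\beta\normsqr{y_1-y}+\tfrac12\normsqr{x_1-x_0}$. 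Convexity of $P_{x,y}$ and $D_{x,y}$ (Jensen) then gives $\G_{x,y}(X_k,Y_k)\le\frac{M(x,y)}{2S_k}$, and $S_k\ge k\tau_{\min}$ together with $1/(2\tau_{\min})=\sqrt{L^2+(\beta/(1-c))\normsqr{A}}$ collapses this to the stated bound after taking the supremum over $(x,y)\in B_1\times B_2$.

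Finally, for convergence to a saddle point 2), I would run an Opial-type argument: the monotonicity of $\Phi_k$ holds for every saddle point, giving Fejér monotonicity in the $\tfrac1\beta$-weighted norm; boundedness extracts a subsequence $(x_{k_j},y_{k_j})\to(\bar x,\bar y)$; and using $\norm{x_{k+1}-x_k}\to 0$, $\norm{y_{k+1}-y_k}\to 0$, $\norm{\tilde x_k-x_k}=\theta_k\norm{x_k-x_{k-1}}\to 0$, the stepsizes bounded away from $0$, continuity of $\nabla f$ on $\mathcal{C}$, and closedness of $\partial g^*$, I pass to the limit in the step relations $(x_k-x_{k+1})/\tau_k=\nabla f(x_k)+A^Ty_{k+1}$ and $(y_k-y_{k+1})/\sigma_k+A\tilde x_k\in\partial g^*(y_{k+1})$ to show $(\bar x,\bar y)$ solves \eqref{eq:xy-optimality}. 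Taking $(x,y)=(\bar x,\bar y)$ in the $\Phi$-recurrence, its limit exists and vanishes along $k_j$, forcing the whole sequence to converge to $(\bar x,\bar y)$. I expect the main obstacle to be the transfer from local smoothness to a single global Lipschitz constant $L$ on $\mathcal{C}$ — which underlies the uniform stepsize bounds used in \emph{both} the summability step and the $S_k\ge k\tau_{\min}$ estimate — together with the careful telescoping bookkeeping that identifies the net $P_{x,y}$-weights with those defining $X_k$ and certifies their nonnegativity from the growth rule.
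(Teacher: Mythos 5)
Your proposal is correct and follows essentially the same route as the paper: the same recurrence from Lemma~\ref{lem:recurrence-adaptive2} telescoped with $\tau_{k+1}\theta_{k+1}\le\tau_k(1+\theta_k)$ and $\tau_kL_k<\tfrac12$ (your $\Phi_k$ is just the unrolled form), the same chain boundedness $\Rightarrow$ compact hull $\Rightarrow$ uniform $L$ and $\tau_k\ge\tau_{\min}$ $\Rightarrow$ summability of successive differences, and the same Jensen-plus-$S_k\ge k\tau_{\min}$ argument for the restricted gap. The only cosmetic deviations are in point 2), where you pass to the limit in the update inclusions via closedness of $\partial g^*$ while the paper verifies $P_{\hat{x},\hat{y}}\ge 0$ and $D_{\hat{x},\hat{y}}\ge 0$ directly from convexity and l.s.c.\ (bounding $\theta_k\le 2$ by a nested-radical argument rather than your two-sided $\tau_k$ bound), and where you spell out the final Opial/Fej\'er step that the paper leaves implicit.
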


The boundedness result of Theorem~\ref{thm:conv-base-case} point 1) implies that the closed set $\mathcal{C} = \overline{\text{Conv}}(\{\xopt, x_0, x_1, \ldots \})$ is also bounded and hence compact. The local smoothness assumption on $f$ then ensures that there exists $L>0$ such that $f$ is $L$-smooth over $\mathcal{C}$. Note that such an $L$ exists for any $x_0, \, y_0$ since the boundedness result itself holds for any initial conditions (though  the value  of such $L$ cannot be  generally known, as it is path-dependent). Using this fact, we can show a uniform lower-bound on the primal stepsize: $\displaystyle \tau_k \geq \frac{1}{2} \left( L^2 + (\beta/(1-c)) \normsqr{A}\right)^{-1/2}> 0, \, \forall k$, which is instrumental in deriving the subsequent convergence results, as well as Theorem~\ref{thm:strong-conv}. We emphasize that the appearance of constant $L$ in the provided rates is a consequence of iterate boundedness, whose proof does not require its knowledge. Finally, we note that our rate is comparable to that of CVA in terms of constants.  

\subsection{Analysis under the additional Assumption~\ref{ass:local-strong-convexity} } 

We now study APDA under the additional assumption of locally strongly convex $f$ and full row rank $A$. Before proving the result of Theorem~\ref{thm:strong-conv}, a few remarks are in order. First, the boundedness result of Theorem~\ref{thm:conv-base-case} point 1) also holds for constant $c  = 0$, since this constant was required only for proving convergence to a saddle point in point 2) of the theorem. Second, taking a smaller stepsize than the originally defined $\tau_k$ will not change the validity of Lemma~\ref{lem:recurrence-adaptive2} or the boundedness result of Theorem~\ref{thm:conv-base-case}, as it remains within the required interval mentioned in section~\ref{subsec:aboutPDHG}. 

Consequently, for studying APDA under the additional Assumption~\ref{ass:local-strong-convexity} we can simplify the stepsize expression by taking $c = 0$, because now we are able to show iterate convergence directly by using the strong convexity and full row-rank assumptions. Specifically, we consider the stepsize:
\begin{equation}
\label{eq:new-tau}
\tau_k = \min \left\{\frac{1}{2\sqrt{4L_k^2 + \beta\normsqr{A}}}, \tau_{k-1}\sqrt{1+\theta_{k-1}/2}\right\},
\end{equation}
which is smaller than the one originally considered and, due to the aforementioned remarks it ensures that APDA produces a bounded sequence. It follows that, under the local smoothness and local strong convexity assumptions, there exist constants $L$ and $\mu$ such that $f$ is $L$-smooth and $\mu$-strongly convex over $\overline{\text{Conv}}(\{\xopt, x_0, x_1, \ldots \})$. 

The existence of these constants along with $A$ being full row rank, in turn, allows us to derive a strengthened version of the inequality in Lemma~\ref{lem:recurrence-adaptive2} for $(x, y) = (\xopt, \yopt)$: 
\begin{align*}
\normsqr{x_{k+1} - \xopt} + \left(\frac{1}{\beta} + q_1\right)\normsqr{y_{k+1} - \yopt} + \left(\frac{1}{2} + q_2\right)\normsqr{x_k - x_{k+1}} + q_3 \normsqr{y_{k+1} - y_k} & \\
&\hspace{-70mm} + 2\tau_k(1+\theta_k)  P_{\xopt, \yopt}(x_{k}) + 2\tau_kD_{\xopt, \yopt}(y_{k+1}) \nn \\[2mm]
&\hspace{-130mm}\leq\left( 1 - q_4\right) \normsqr{x_{k} - \xopt} +  \frac{1}{\beta}\normsqr{y_k - \yopt} + \left( \frac{1}{2} - q_5\right)\normsqr{x_k - x_{k-1}} +2\tau_k\theta_k P_{\xopt, \yopt}(x_{k-1}),
\end{align*}
where $q_1, q_2, q_3, q_4, q_5> 0$ are constants given in the appendix. This new inequality represents in fact a contraction, which guarantees the linear convergence rate stated in Theorem~\ref{thm:strong-conv}.

\begin{restatable}{thm-rest}{ThmStrcnvx}
\label{thm:strong-conv}
Consider APDA along with Assumptions~\ref{ass:regularity}, \ref{ass:constr-qual} and \ref{ass:local-strong-convexity}. Let $(\xopt, \yopt) \in \X \times \Y$ be a saddle point of problem ~\eqref{eq:prob-initial}. Furthermore, let $\tau_k$ be defined by~\eqref{eq:new-tau} and let $\displaystyle s := \sqrt{4L^2 + \beta \normsqr{A}}$ and $\displaystyle t := \sqrt{4\mu^2 + \beta \normsqr{A}}$, where $\mu$, $L$ are the strong convexity and smoothness constants of $f$ over the compact set $\overline{\text{Conv}}(\{\xopt, x_0, x_1, \ldots \})$.\\[0.5mm] 

Then, for all $k$:
\begin{equation*}
\normsqr{x_k - \xopt} + \frac{1}{\beta}\normsqr{y_k - \yopt} \leq \left( 1 - \min\left\{ p,q,r\right\}\right)^k M,
\end{equation*}
where the rate constants are given by:
\begin{equation*}
p = \frac{1}{2}, \;\;\;\; q = \frac{\mu}{4s}, \;\;\; \; r = \frac{\beta \sigma^2_{\mathrm{min}}(A)\mu}{\beta \sigma^2_{\mathrm{min}}(A)\mu + 8s^2t + 4L^2s},
\end{equation*}
and $M = \normsqr{x_2 - \xopt} + \left(\frac{1}{\beta} + T \right)\normsqr{ y_2- \yopt}+\frac{1}{2}\normsqr{x_2- x_1} + 2\tau_1P_{\xopt, \yopt}(x_{1})$, $\displaystyle T = \frac{\sigma^2_{\mathrm{min}}(A)\mu}{8s^2t + 4L^2s}$, with $\sigma_{\mathrm{min}}(A)$ representing the smallest singular value of $A$. 
\end{restatable}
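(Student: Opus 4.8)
The plan is to upgrade the energy recurrence of Lemma~\ref{lem:recurrence-adaptive2} into a genuine one-step contraction of a Lyapunov functional and then unroll it geometrically. First I would specialize that recurrence to $(x,y)=(\xopt,\yopt)$ with $c=0$ and the smaller stepsize~\eqref{eq:new-tau}. Because \eqref{eq:new-tau} carries $4L_k^2$ (instead of $L_k^2$) under the root, it forces $\tau_kL_k\le\tfrac14$ and yields the uniform lower bound $\tau_k\ge 1/(2s)$ with $s=\sqrt{4L^2+\beta\normsqr{A}}$, using $L_k\le L$ on the compact hull $\overline{\text{Conv}}(\{\xopt,x_0,x_1,\ldots\})$. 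Choosing the free multiplier $\eta_k$ strictly inside its admissible interval then leaves strictly positive slack in the coefficients of $\normsqr{x_{k+1}-x_k}$ and $\normsqr{y_{k+1}-y_k}$; these slacks are the constants $q_2,q_3>0$ appearing in the strengthened inequality stated before the theorem. What remains is to manufacture the two genuinely new terms $q_1$ (extra dual distance on the left) and $q_4$ (contraction of the primal distance on the right), which is where Assumption~\ref{ass:local-strong-convexity} enters.

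The primal contraction comes from local strong convexity. Using the optimality condition $-A^T\yopt=\nabla f(\xopt)$, one has $P_{\xopt,\yopt}(x_k)=f(x_k)-f(\xopt)-\dotprod{\nabla f(\xopt)}{x_k-\xopt}\ge\tfrac{\mu}{2}\normsqr{x_k-\xopt}$, so a portion of the left-hand term $2\tau_k(1+\theta_k)P_{\xopt,\yopt}(x_k)$ converts into a contraction $q_4\normsqr{x_k-\xopt}$ with $q_4\gtrsim\mu\tau_k\gtrsim\mu/(2s)$, which is the origin of the candidate rate $q=\mu/(4s)$. The stepsize rule $\tau_{k+1}^2\le\tau_k^2(1+\theta_k/2)$ simultaneously guarantees $\tau_{k+1}\theta_{k+1}=\tau_{k+1}^2/\tau_k\le(1+\theta_k/2)\tau_k$, so the primal-gap terms still telescope after this diversion. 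The dual augmentation is the delicate part, since the dual variable inherits no curvature directly; here I would invoke full row rank of $A$. From the primal update $A^Ty_{k+1}=\tfrac{1}{\tau_k}(x_k-x_{k+1})-\nabla f(x_k)$ together with $A^T\yopt=-\nabla f(\xopt)$, the bound $\sigma_{\mathrm{min}}(A)\norm{y_{k+1}-\yopt}\le\norm{A^T(y_{k+1}-\yopt)}$, and $L$-smoothness, I obtain $\sigma^2_{\mathrm{min}}(A)\normsqr{y_{k+1}-\yopt}\lesssim\normsqr{x_{k+1}-x_k}/\tau_k^2+L^2\normsqr{x_k-\xopt}$. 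Spending part of the $\normsqr{x_{k+1}-x_k}$ slack and part of the freshly created $\normsqr{x_k-\xopt}$ contraction then creates the term $q_1\normsqr{y_{k+1}-\yopt}$; this trade-off is precisely what fixes the constant $T$ and the dual rate $r$, whose denominator $\beta\sigma^2_{\mathrm{min}}(A)\mu+8s^2t+4L^2s$ records the relevant stepsize and smoothness bounds, with $t=\sqrt{4\mu^2+\beta\normsqr{A}}$.

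With the strengthened inequality in hand and all of $q_1,\dots,q_5>0$ explicit, I would define the Lyapunov functional $\Phi_k:=\normsqr{x_k-\xopt}+(\tfrac1\beta+T)\normsqr{y_k-\yopt}+\tfrac12\normsqr{x_k-x_{k-1}}+2\tau_{k-1}(1+\theta_{k-1})P_{\xopt,\yopt}(x_{k-1})$ and verify, block by block, that the inequality reads $\Phi_{k+1}\le(1-\rho)\Phi_k$ with $\rho=\min\{p,q,r\}$: the $\normsqr{x_k-\xopt}$ block contracts at rate $q$, the dual block at rate $r$, and the momentum block at rate $p=\tfrac12$, which is afforded because \eqref{eq:new-tau} forces $\tau_kL_k\le\tfrac14$, so the coefficient of $\normsqr{x_k-x_{k-1}}$ surviving on the right is at most half the $\tfrac12$ it carries in $\Phi_k$. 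Unrolling from the base index, where $\Phi_2=M$ by construction (note $\theta_1=0$ collapses $2\tau_1(1+\theta_1)P_{\xopt,\yopt}(x_1)$ to $2\tau_1P_{\xopt,\yopt}(x_1)$), and discarding the non-negative auxiliary terms, then gives $\normsqr{x_k-\xopt}+\tfrac1\beta\normsqr{y_k-\yopt}\le\Phi_k\le(1-\rho)^{k}M$, which is the claimed bound.

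The main obstacle I anticipate is the shared-budget bookkeeping. The single primal-gap quantity $P_{\xopt,\yopt}(x_k)$ and the single dissipation $\normsqr{x_{k+1}-x_k}$ must each be apportioned among competing demands: the former between the telescoping gap term retained in $\Phi_{k+1}$ and the strong-convexity contraction $q_4$, and the latter between the $\tfrac12\normsqr{x_{k+1}-x_k}$ required by $\Phi_{k+1}$ and the full-row-rank bound that creates $q_1$. At the same time the strong-convexity gain must also cover the $L^2\normsqr{x_k-\xopt}$ cost incurred in that same dual bound, all while every coefficient stays strictly positive and uniform in $k$ via $\tau_k\ge 1/(2s)$. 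Making these allocations close consistently is exactly what forces the precise forms of $T$, $q$ and $r$, and is the only place where genuine care rather than routine manipulation is required.
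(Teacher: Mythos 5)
Your global architecture matches the paper's (strengthen Lemma~\ref{lem:recurrence-adaptive2} at $(\xopt,\yopt)$ with $c=0$ and the stepsize~\eqref{eq:new-tau}, verify a block-by-block contraction, unroll from $E_2=M$), and several ingredients are right: $\tau_k L_k \le \tfrac14$, the uniform bounds $\tau_k \in [1/(2s), 1/(2t)]$, the momentum rate $p=\tfrac12$, and the base-case collapse via $\theta_1=0$. The genuine gap is your mechanism for the primal contraction. You propose to fund $q_4\normsqr{x_k-\xopt}$ by diverting a portion $\delta_k$ of the left-hand gap term through $P_{\xopt,\yopt}(x_k)\ge\tfrac{\mu}{2}\normsqr{x_k-\xopt}$, with $\delta_k\approx\tau_k$ so that $q_4\approx\mu\tau_k/2$. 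But the diverted portion must be repaid by the telescoping: the next step's right-hand side carries $2\tau_{k+1}\theta_{k+1}P_{\xopt,\yopt}(x_k)$ with $2\tau_{k+1}\theta_{k+1}=2\tau_{k+1}^2/\tau_k\le 2\tau_k(1+\theta_k/2)$, so the entire per-step surplus in the gap coefficient is $2\tau_k(1+\theta_k)-2\tau_k(1+\theta_k/2)=\tau_k\theta_k$, and this same surplus must also fund the $(1-\rho)$ factor on the gap block itself. Since $\theta_k$ is only bounded below by $t/s$, which is arbitrarily small when $\mu\ll L$, a diversion of size $\delta_k\sim\tau_k$ breaks the telescoping whenever $\theta_k<1$: the required inequality reduces to $2+\theta_k\le(1-\rho)(1+2\theta_k)$, which is false there. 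The most your route can extract is $q_4\lesssim\mu\tau_k\theta_k$, i.e.\ roughly $q\cdot(t/s)$ --- a strictly worse, $\theta_k$-dependent rate, not the claimed $q=\mu/(4s)$. Your dual augmentation compounds this, since it charges a further $L^2\normsqr{x_k-\xopt}$ cost against that already-insufficient contraction.

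The device you are missing is how the paper manufactures both new terms at once without raiding the gap telescoping: it bounds the \emph{same} inner product $-2\tau_k\dotprod{\nabla f(x_k)}{x_k-\xopt}$ twice --- once by strong convexity, once by the co-coercivity-type inequality --- and averages, obtaining simultaneously $-\tfrac{\mu\tau_k}{2}\normsqr{x_k-\xopt}$ (whence $q=\mu/(4s)$ via $\tau_k\ge 1/(2s)$; the averaging is exactly why it is $\mu/(4s)$ and not $\mu/(2s)$) and a fresh negative term $-\tfrac{\tau_k}{2L}\normsqr{\nabla f(x_k)-\nabla f(\xopt)}$. The latter, rather than your upper bound on $\normsqr{y_{k+1}-\yopt}$, is the engine of the dual block: writing $\nabla f(\xopt)-\nabla f(x_k)=A^T(y_{k+1}-\yopt)-(x_k-x_{k+1})/\tau_k$ from the update rule and~\eqref{eq:xy-optimality}, then lower-bounding with Young's inequality (parameter $\xi=2\tau_k^2L_kL$) and $\normsqr{A^Tv}\ge\sigma^2_{\mathrm{min}}(A)\normsqr{v}$, produces $T\normsqr{y_{k+1}-\yopt}$ on the left at the sole cost of an extra $\tau_kL_k\normsqr{x_{k+1}-x_k}$, which the modified stepsize absorbs ($1-\eta_k\tau_k\norm{A}-2\tau_kL_k>\tfrac12$ holds precisely because~\eqref{eq:new-tau} carries $4L_k^2$ under the root). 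With this, the gap terms need only contract at the rate $\theta_{k-1}/(2(1+\theta_{k-1}))$, which is dominated by $q$ in the minimum, and the primal and dual contractions carry no hidden charges. As written, your budget does not close at the stated constants $q$, $r$, $T$, so the proposal does not prove the theorem; repaired along your lines it would at best give linear convergence with a strictly degraded factor.
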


A few remarks are in order: first, as a sanity check, we observe that when $A =0$ we recover the contraction factor of \citep{malitsky2020adaptive} which is equal  to  $q$.  

Second, we  make some notes on how our rate compares with existing ones. To our knowledge, there are no explicit results regarding the linear convergence of CVA under assumptions similar to  ours (linear rates are usually shown for the $3$-component objective without assumptions on $A$ --- see e.g., \citep{chambolle2016ergodic}). However, in the case of $L$-smooth and $\mu$-strongly-convex $f$ and full row-rank $A$, \citet{chen2013primal} show the linear convergence of PDFP$^2$O with rate:
\begin{equation*}
\|x_k - x^*\|^2 \leq \left(\|x_1 - x_0\|^2 + \frac{1}{\sigma_{\mathrm{max}(A)}}\|y_1 - y_0\|^2\right) \left( 1 - \min\left\{\frac{\sigma^2_{\mathrm{min}(A)}}{\sigma^2_{\mathrm{max}(A)}}, \frac{\mu}{L}\right\}\right)^{k-1},
\end{equation*}

The rate presented in Theorem~\ref{thm:strong-conv} has a comparatively worse contraction factor. The reason is that our iteration is set up in the style of CVA, where we essentially have a single stepsize to compute using the rephrasing from Section~\ref{subsec:aboutPDHG}. Therefore, $\tau_k$ needs to obey the problem structure with respect to both $L$ and $\lVert A \rVert$, resulting in the `mixed' term appearing in the denominator.

Keeping the above in mind, the interested reader may find in the appendix that constants $q$ and $r$ come from a product between $\tau_k$ and other condition number-related quantities, which is tightly liked to the structure  of  the main inequality used in the paper. This makes the nice separation of condition numbers achieved in PDFP$^2$O's rate not possible in our case and, it seems, the analysis necessary to achieve this kind of adaptivity comes at the cost of worse constants (the same remark holds for \citep{malitsky2020adaptive}).

PDFP$^2$O, on the other hand, achieves a clean bound by having a different iteration style than CVA, as well as a fundamentally different kind of analysis where the iteration is expressed in fixed-point form to show convergence. In this context the stability conditions on the stepsizes are also relaxed --- specifically, $0 < \lambda \leq 1/ \sigma^2_{\mathrm{max}}(A)$ and $0 < \gamma < 2L$ in \citep{chen2013primal}. A drawback of this approach, however, is that the algorithm has no rate guarantees when $f$ is only smooth and not strongly convex and only asymptotic convergence is shown. Also, PDFP$^2$O requires $3$ matrix-vector multiplications per iteration whereas we only require $2$.

\section{Experiments}
\label{sec:exp}

We now present some numerical experiments conducted for APDA\footnote{See \url{https://github.com/mvladarean/adaptive_pda}.}. Additional problems and results are included the appendix. The experiments were implemented in Python 3.9 and executed on a MacBook Pro with 32 GB RAM and a 2,9 GHz 6-Core Intel Core i9 processor.

The baseline we compare against in this section as well as the appendix is CVA, for which we use Algorithm 1 in~\citep{chambolle2016ergodic} (using $g \equiv 0$). In the particular case of sparse logistic regression we also compare against FISTA~\citep{beck2009fast}. For obtaining $x^*$ we ran one of the algorithms for a large number of iterations.

\subsection{Sparse binary logistic regression}
\begin{figure*}[ht!]
\centering
\begin{minipage}[t]{\textwidth}
\setlength{\lineskip}{0pt}
\subfigure[]{
  \includegraphics[width=.33\linewidth]{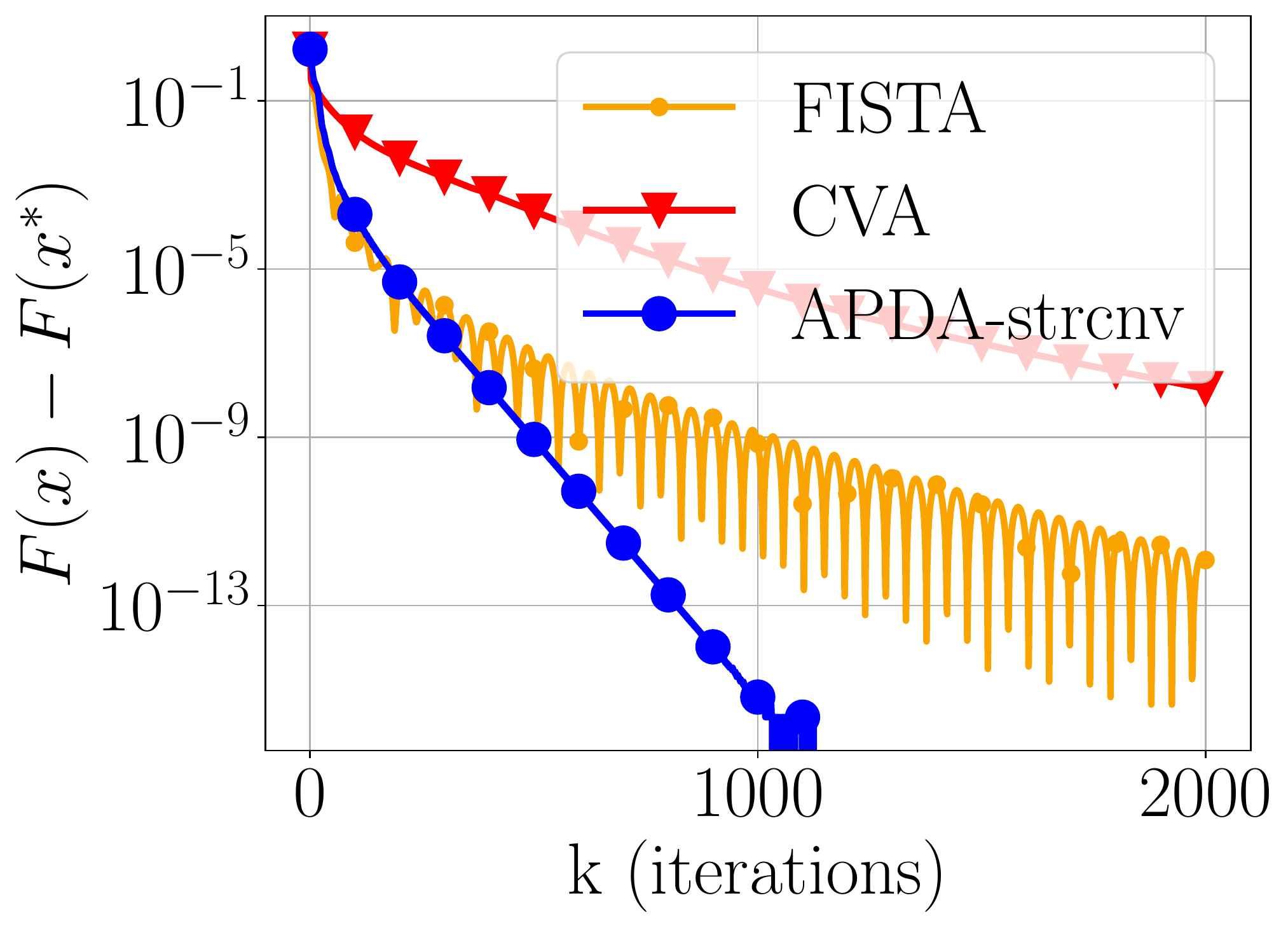}\par
  \includegraphics[width=.33\linewidth]{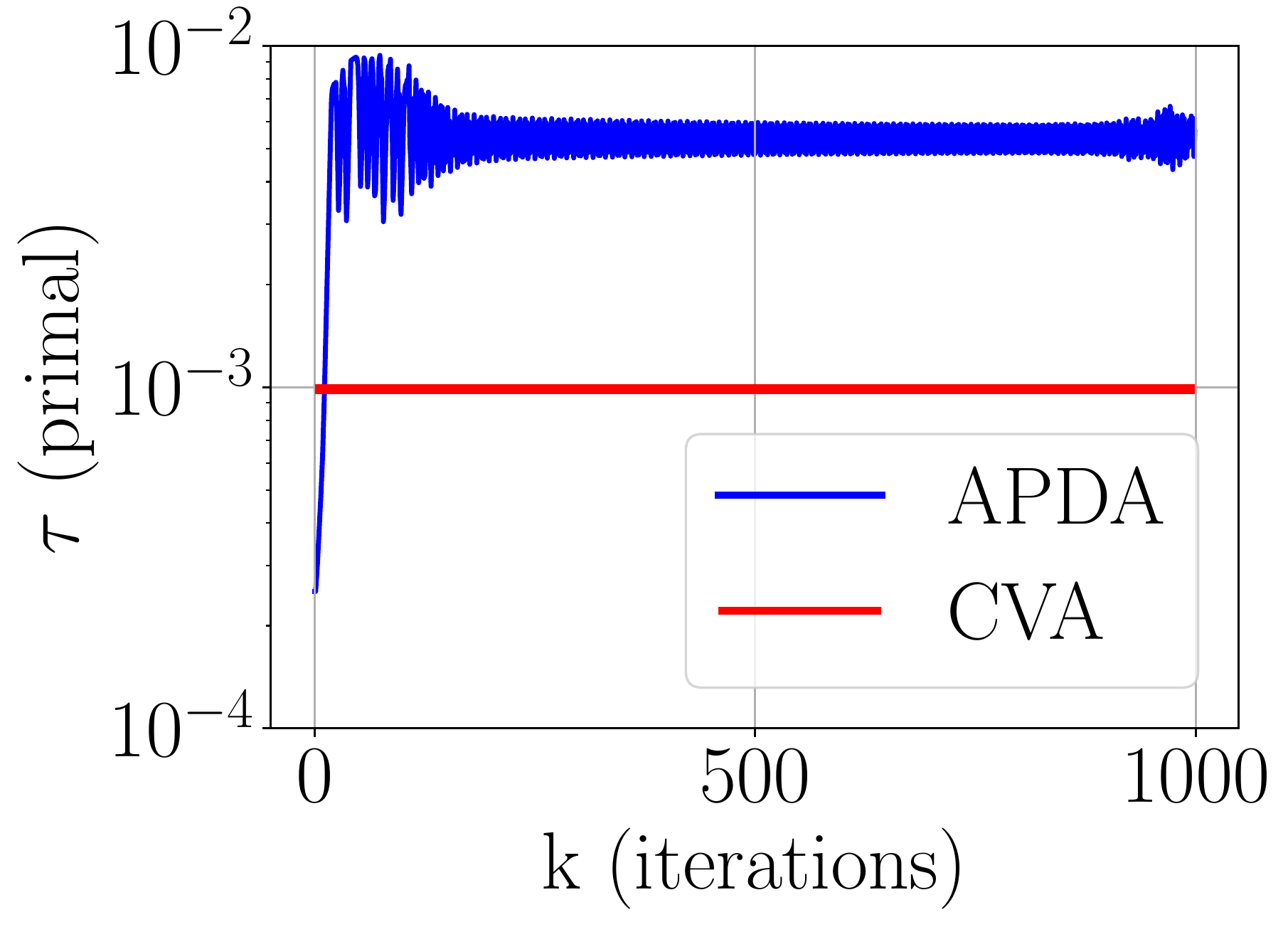}\par
  \includegraphics[width=.33\linewidth]{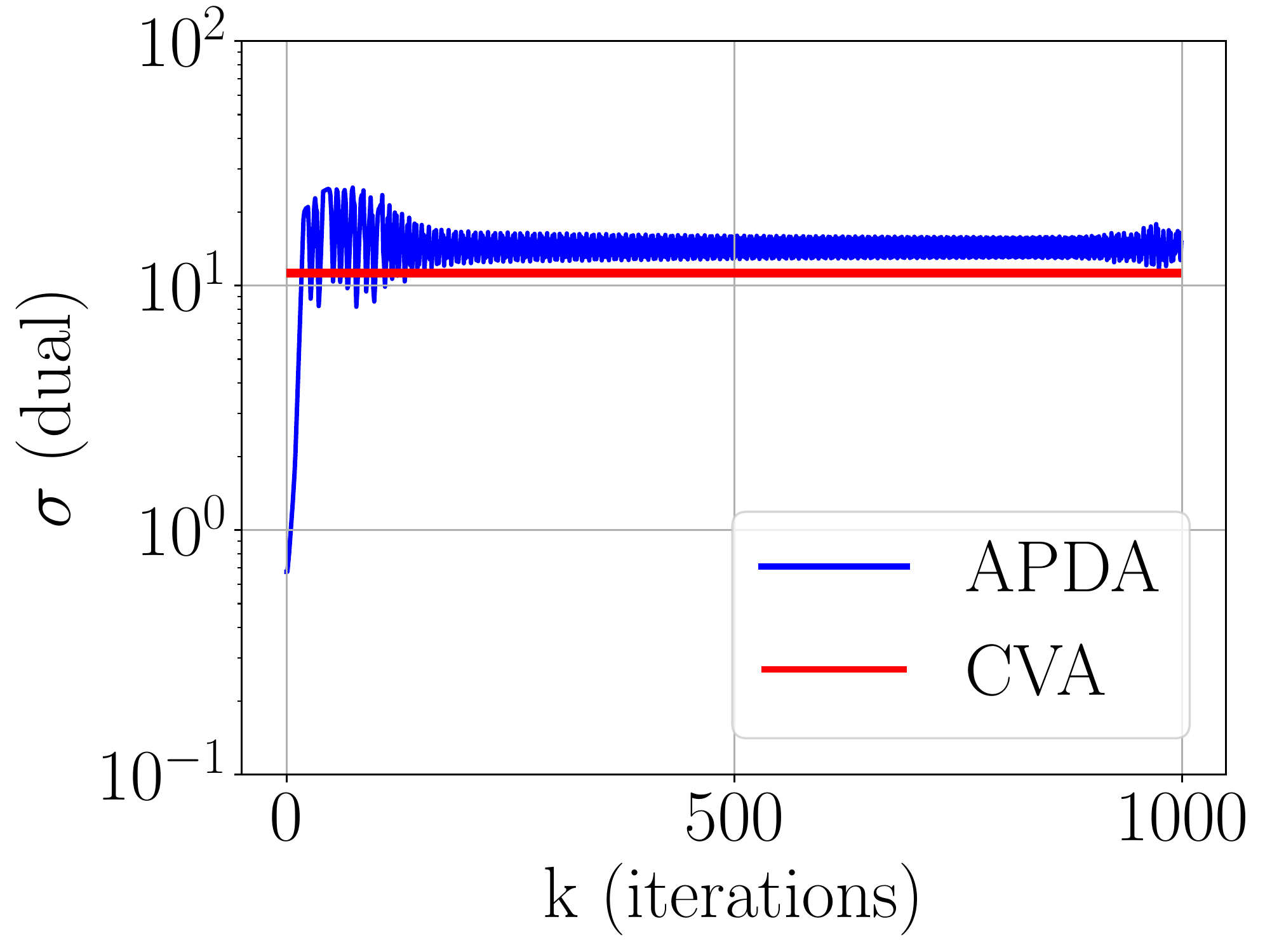}\par
  }\par
  \subfigure[]{
  \includegraphics[width=.33\linewidth]{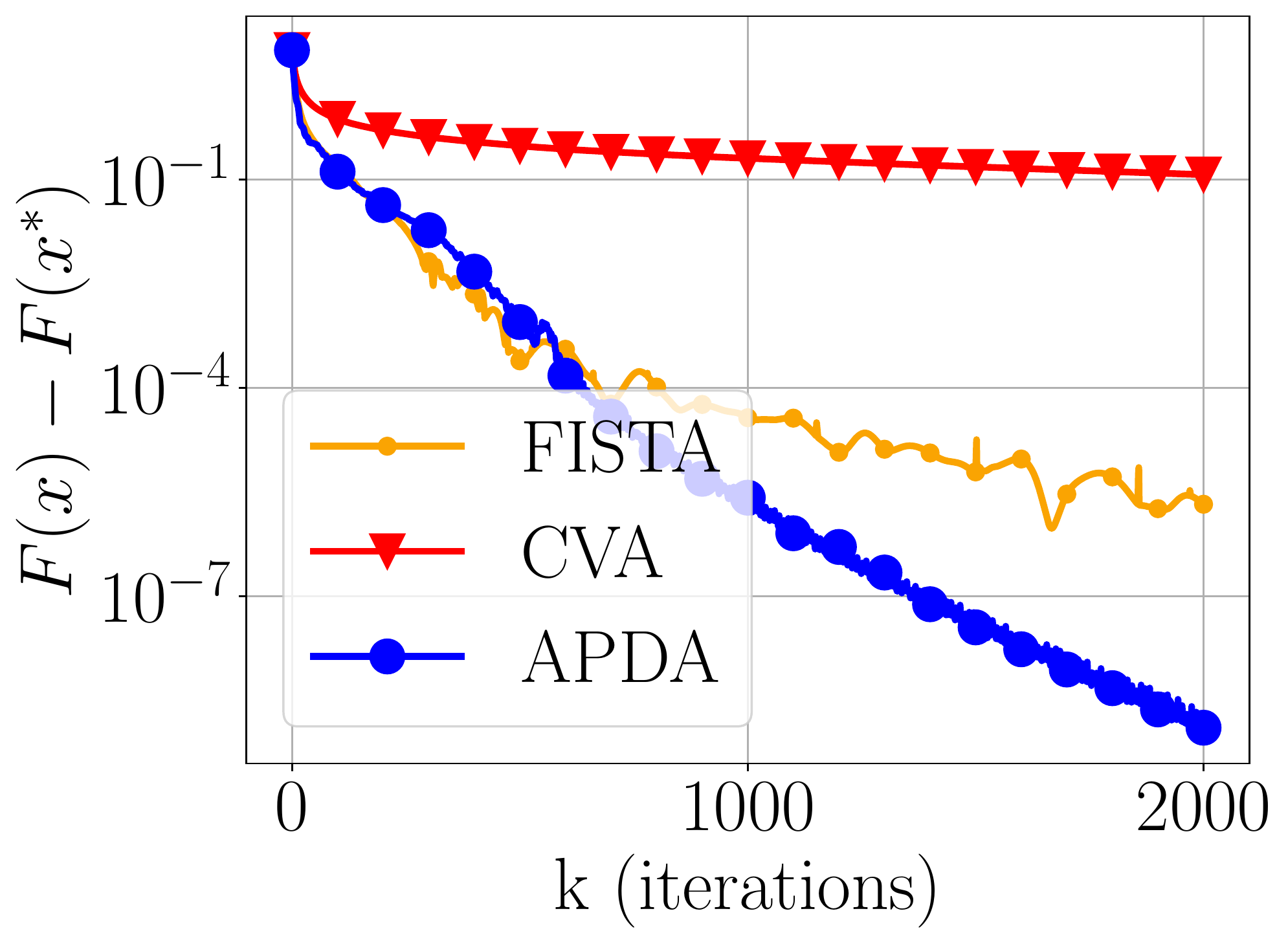}\par
  \includegraphics[width=.33\linewidth]{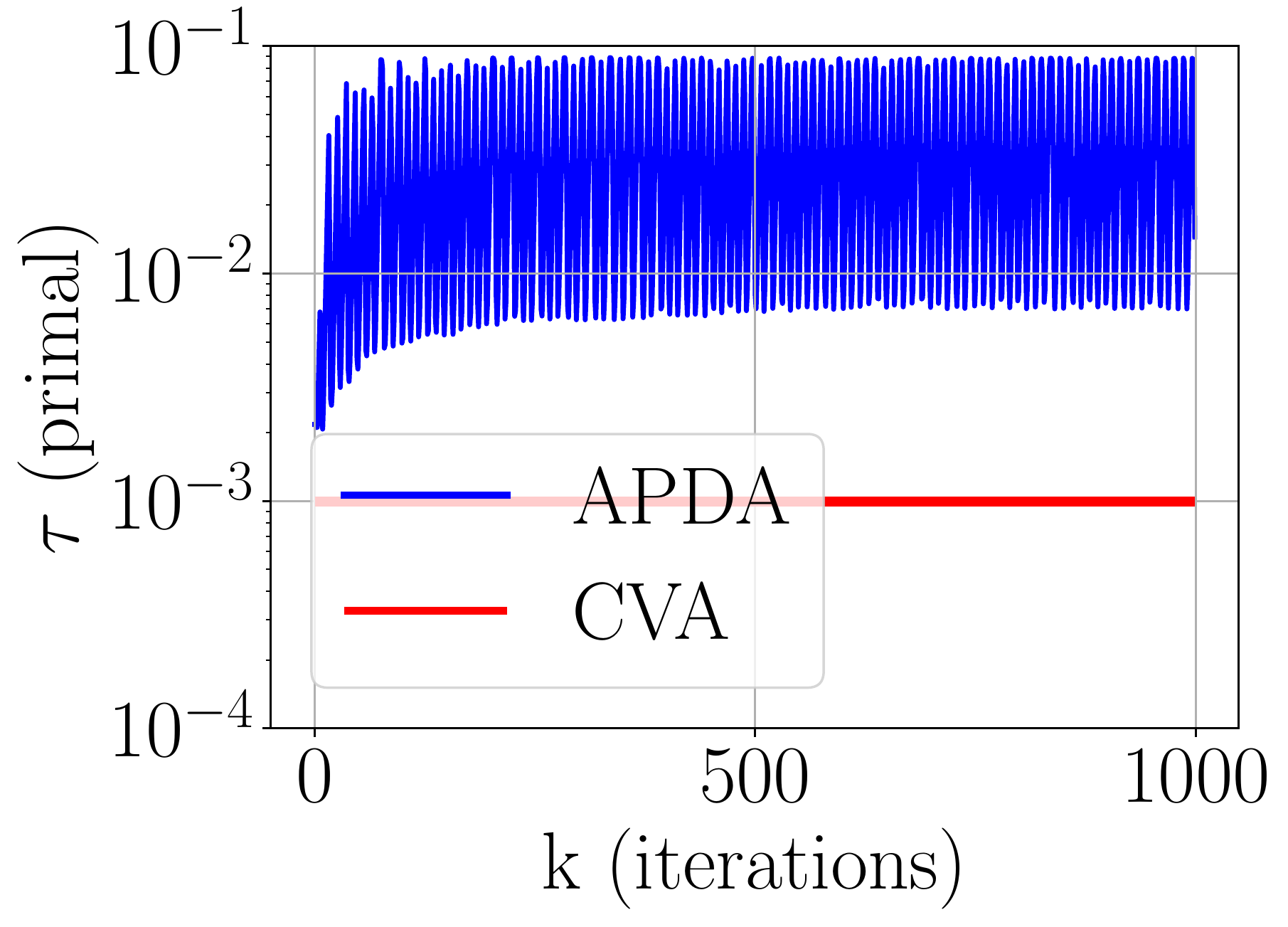}\par
  \includegraphics[width=.33\linewidth]{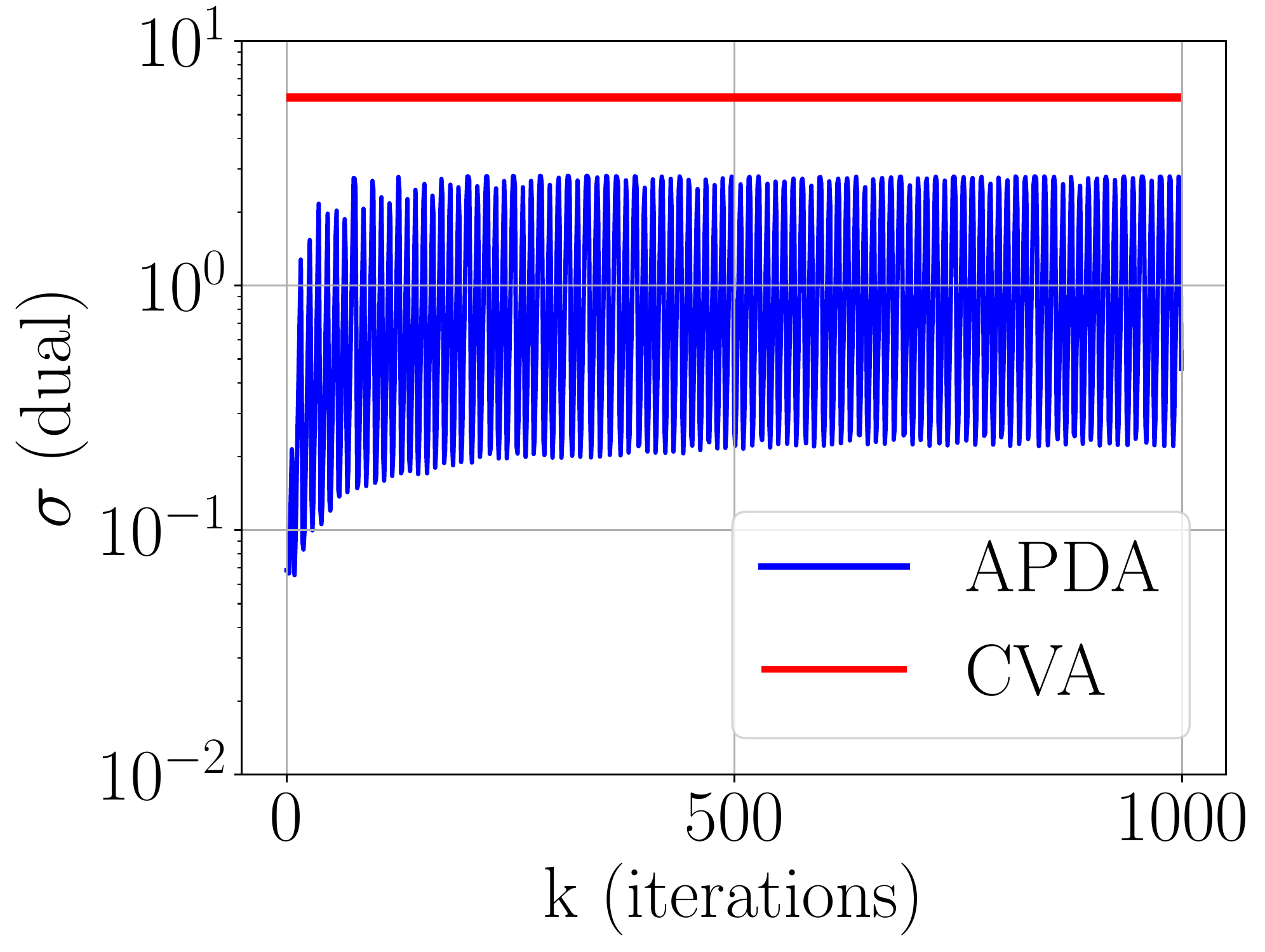}\par
  }\par
  \subfigure[]{
  \includegraphics[width=.33\linewidth]{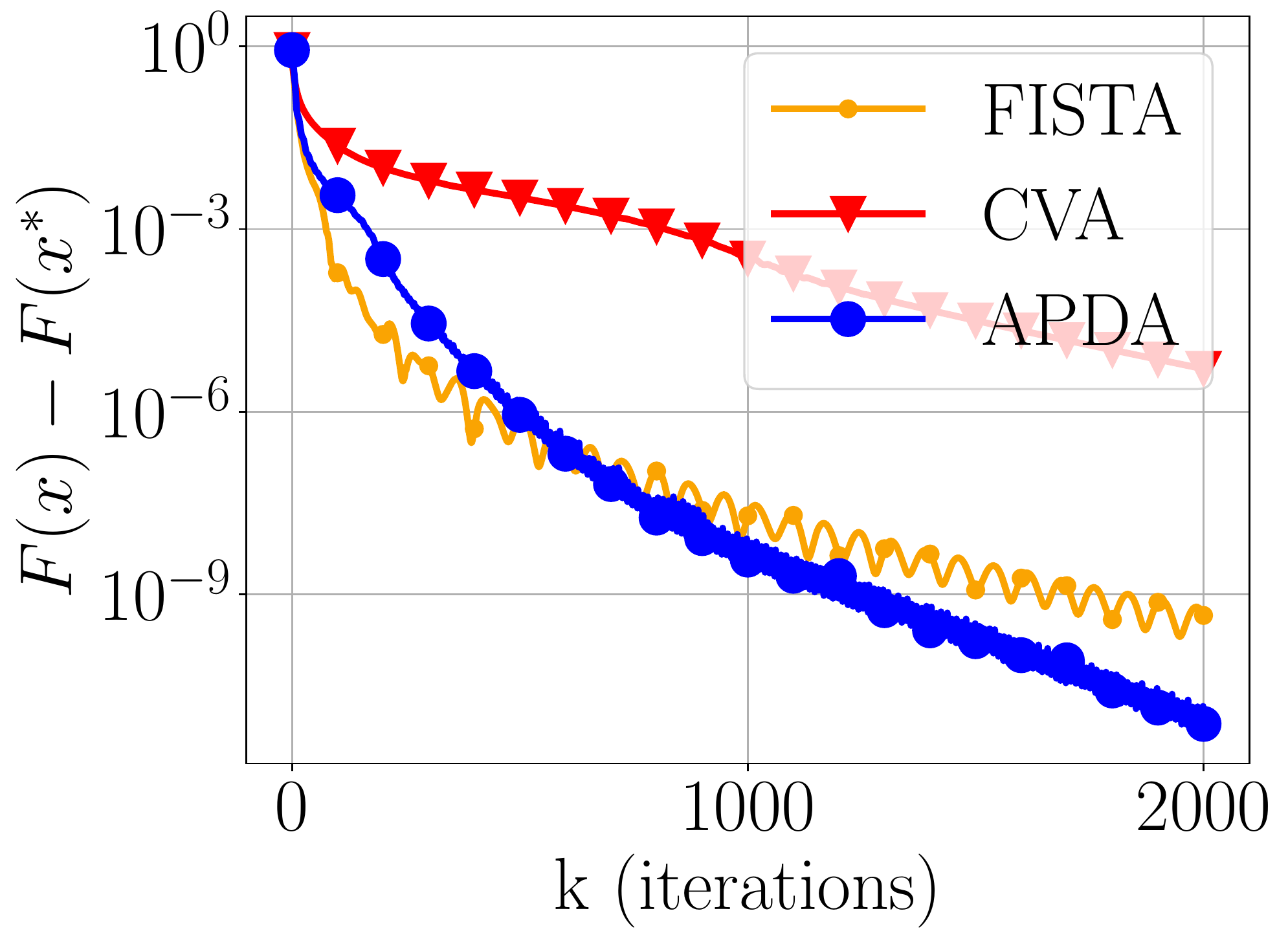}\par
  \includegraphics[width=.33\linewidth]{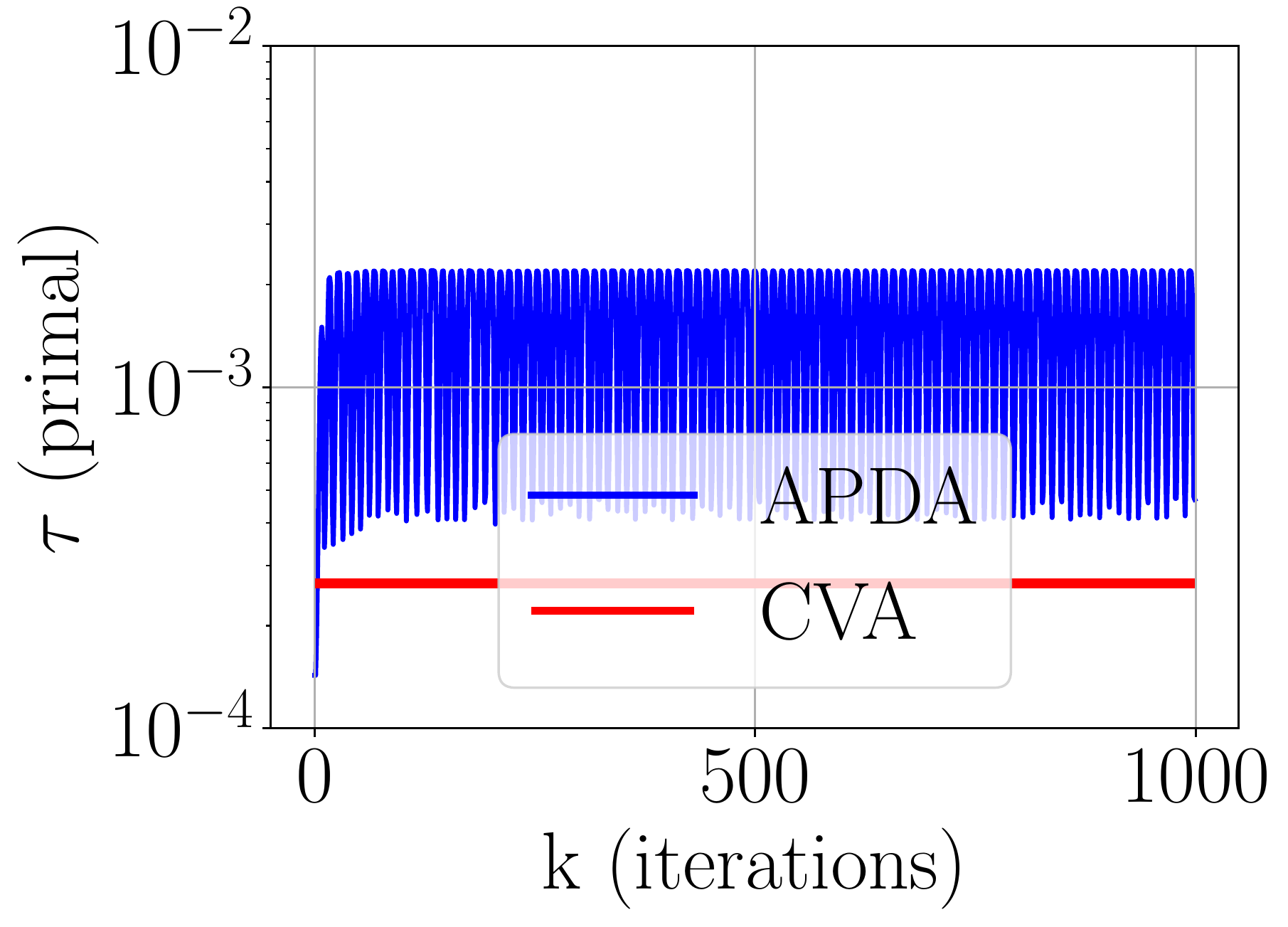}\par
  \includegraphics[width=.33\linewidth]{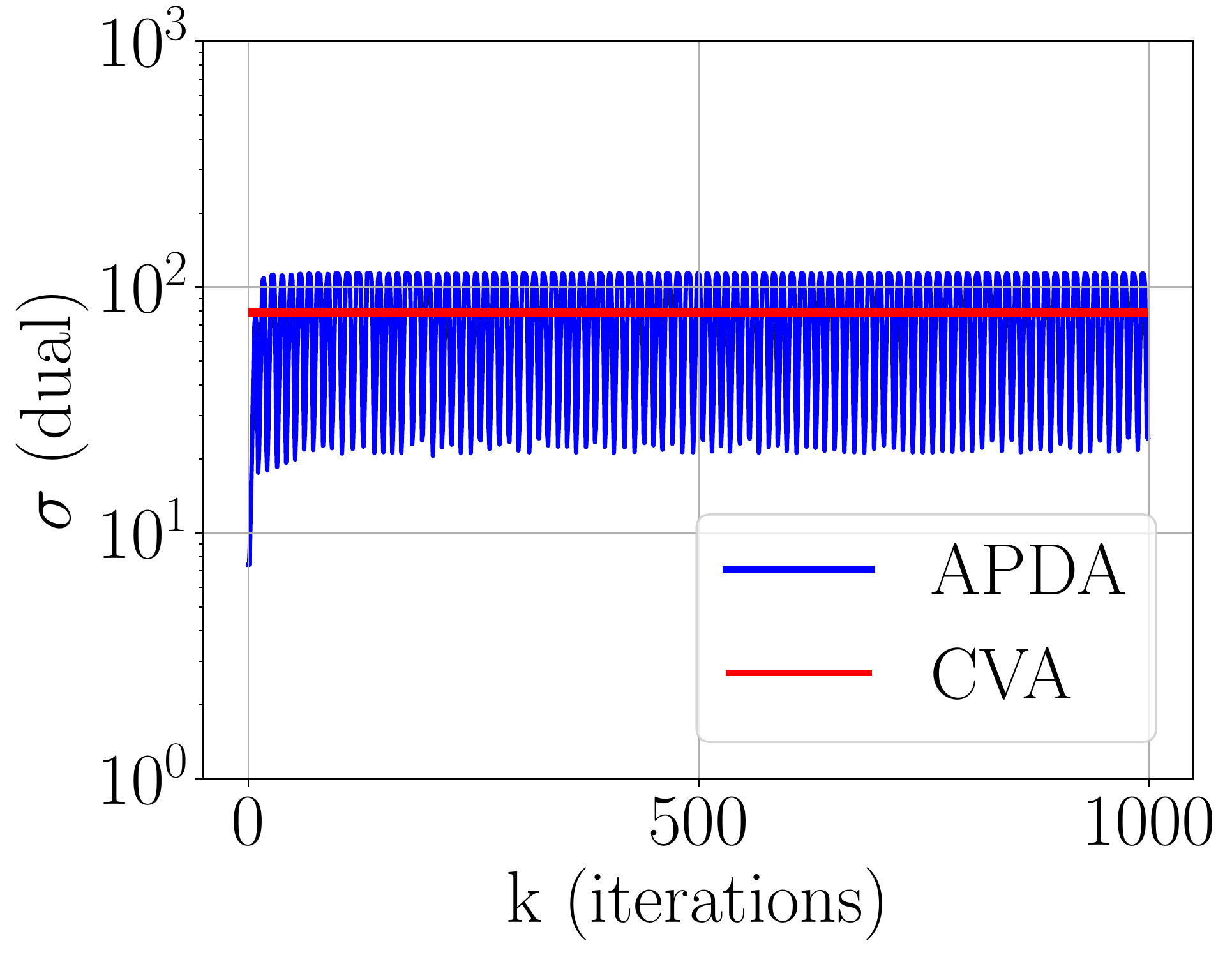}\par
  }\par
  \subfigure[]{
  \includegraphics[width=.33\linewidth]{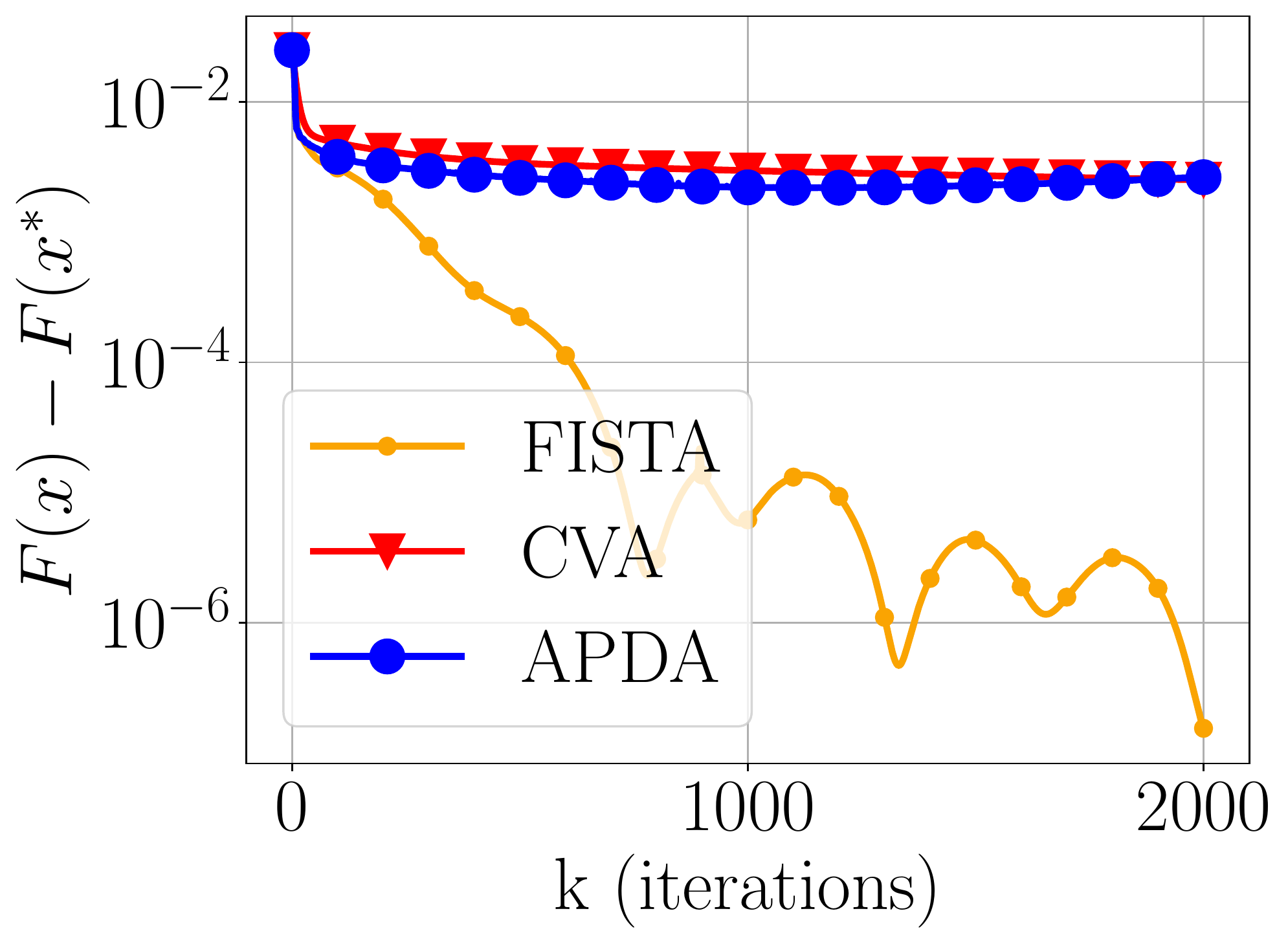}\par
  \includegraphics[width=.33\linewidth]{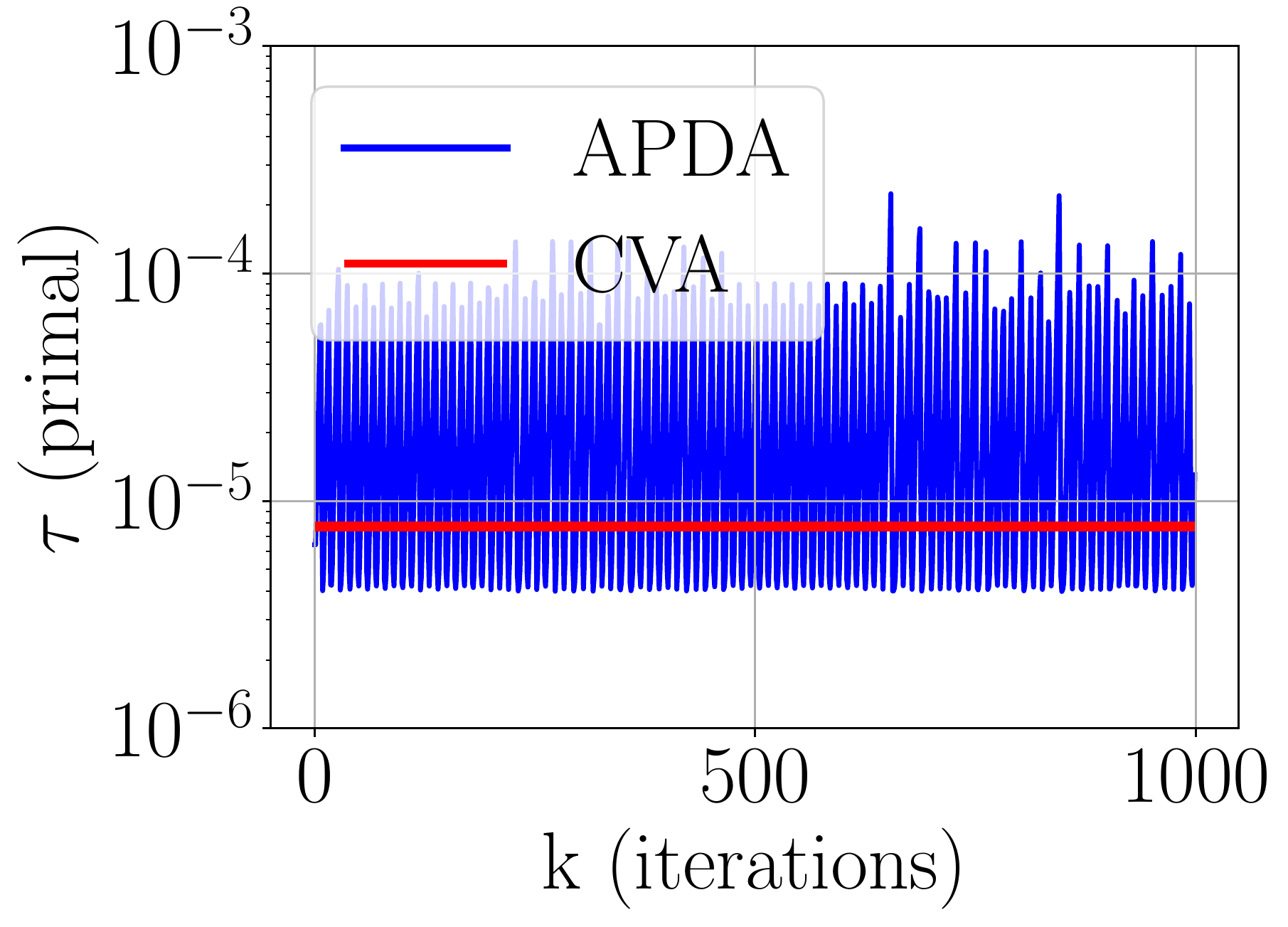}\par
  \includegraphics[width=.33\linewidth]{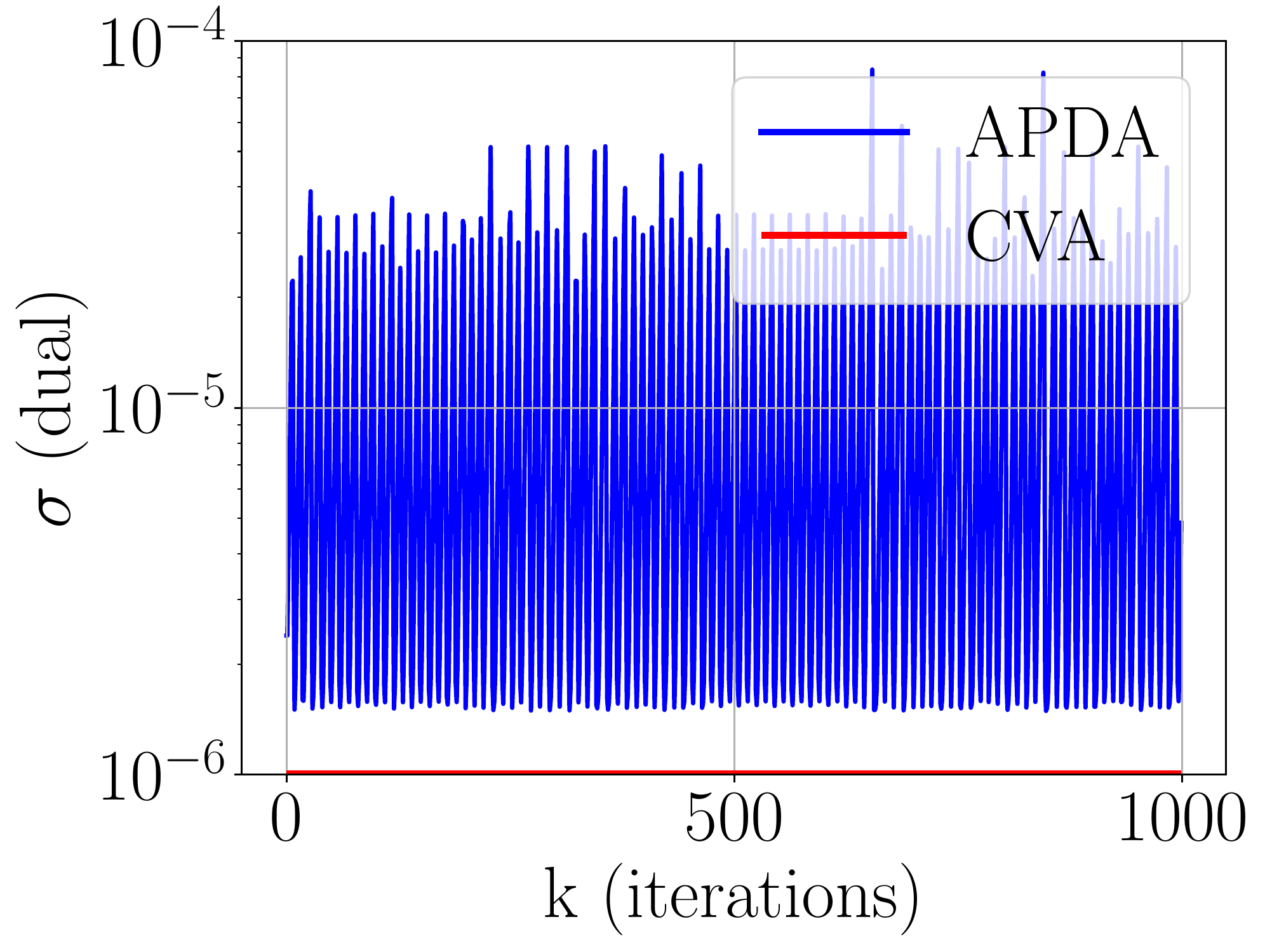}\par
  }
\caption{The first column shows algorithm convergence. The second column shows a comparison of primal stepsizes between APDA and CVA. The third column shows a comparison of dual stepsizes between APDA and CVA. Each subfigure represents a different dataset: (a) \texttt{ijcnn}; (b)  \texttt{mushrooms}; (c) \texttt{a9a}; (d)\texttt{covtype}.} \label{fig:logistic}
\end{minipage}
\end{figure*}

We consider the problem of sparse binary Logistic Regression on $4$ LIBSVM datasets~\citep{CC01a} and show that adaptivity provides faster convergence in $3$ of these cases. The objective we consider is:
\begin{align}
\label{eq:logistic}
\min_{x \in \R^d} F(x) := \underbrace{\sum_{i = 1}^m \log(1 + \exp(-b_i \langle q_i, x\rangle))}_{f} + \underbrace{\lambda \norm{x}_1}_{g},
\end{align}
where $(q_i, b_i) \in \R^d \times \{-1, 1\}$ and $\lambda$ is the regularization parameter. APDA and CVA can be applied to this problem by setting $A = I$ in formulation~\eqref{eq:prob-initial}. Primal-dual algorithms are not the typical choice for solving~\eqref{eq:logistic}, which is usually addressed by methods such as Proximal Gradient or FISTA~\citep{beck2009fast}. However, we note that the computational costs of APDA and FISTA are comparable since the matrix-vector multiplication cost of the former is removed due to a $A = I$.

We choose $\lambda = 0.005\norm{Q^Tb}_{\infty}$, where $Q^T = \left[q_1^T, \ldots  q_m^T\right]^T$. For APDA we perform a parameter sweep over $\beta \in  [\texttt{1e-3}, \texttt{1e6}]$ for each dataset  and settle for: $\beta = \texttt{2.68e3}$ for \texttt{ijcnn};  $\beta = \texttt{5.18e4}$ for \texttt{a9a}; $\beta = \texttt{3.16e1}$ for \texttt{mushrooms}; $\beta = \texttt{3.73e-1}$ for \texttt{covtype}.

For CVA we sweep $p \in [\texttt{1e-3}, \texttt{1e6}]$ and set $\tau = \frac{1}{\norm{A}/p + L}$ and $\sigma = \frac{1}{p\norm{A}}$ --- by construction, these stepsizes satisfy the validity condition and are as large as possible since the condition is satisfied with equality. We do an additional tuning procedure where we choose constants $\tau \in [\texttt{1e-10}, \texttt{1e2}]$ and $\xi \in [\texttt{1e-5}, \texttt{1e2}]$  and set $\sigma = \tau \xi$, which are subject to verifying the stepsize validity condition of CVA. Finally we select the best stepsizes across the two tuning phases to be (truncated to 3 decimals): $\tau =  \texttt{9.869e-4}$, $\sigma = \texttt{1.125e1}$ for \texttt{ijcnn}; $\tau = \texttt{2.655e-4}$, $\sigma = \texttt{7.896e1}$ for \texttt{a9a}; $\tau = \texttt{9.936e-4}$, $\sigma = \texttt{5.878e0}$ for \texttt{mushrooms}; $\tau = \texttt{7.728e-06}$, $\sigma = \texttt{1e-06}$ for \texttt{covtype}.

Note that the Hessian of $f$ is given by $\nabla^2f(x) = Q^T D(x)Q $, where $D(x)$ is a diagonal matrix such that $D_{i,i} (x)= \sigma_i(x)(1 - \sigma_i(x))$, where $\sigma_i(x) = \frac{1}{1 + \exp(-b_i\langle q_i, x\rangle)} \in (0, 1)$. Clearly, over any compact set in $\mathcal{C}\subset \X$ there exist $D_{\mathrm{min}} := \min_{i,x \in \mathcal{C}} D_{i,i}(x) \in (0, 1)$ such that $D_{\mathrm{min}}Q^TQ\preceq Q^T D(x)Q$. As a result, a sufficient condition for local strong convexity is that the minimum eigenvalue of $Q^TQ$ be greater than $0$.

The convergence results along with stepsize comparison plots are presented in Figure~\ref{fig:logistic}. For dataset \texttt{ijcnn} we run APDA with the modified $\tau_k$ used in Theorem~\ref{thm:strong-conv}, since $\lambda_{\mathrm{min}}(Q^TQ) = 75.13$ and $A$ has full rank. In the latter case, the legend identifier is \texttt{APDA-strcnv}. For the remaining datasets we use only the basic setting for $\tau_k$, as $\lambda_{\mathrm{min}}(Q^TQ) \leq \texttt{1e-13}$. 

While APDA outperforms FISTA and CVA on \texttt{ijcnn}, \texttt{a9a} and \texttt{mushrooms}, it shows a relatively poor performance on \texttt{covtype}. We hypothesize that this is related to the condition number of $Q^TQ$, which is almost three orders of magnitude larger in the latter case: $\texttt{9.2e22}$ versus $\texttt{5.3e1}$, $\texttt{2e20}$ and $\texttt{2e17}$ for \texttt{ijcnn}, \texttt{mushrooms} and \texttt{a9a}, respectively. A similar behavior is seen in Figure~1.(c) of \citep{malitsky2020adaptive}. 

Finally, the adaptive property of APDA's stepsizes is visible in the stepsize comparison plots where they  are shown to oscillate within at least one order of magnitude throughout the optimization process.


\section{Limitations of APDA}
\label{sec:lim}
The experiments presented in this paper (Section~\ref{sec:exp} and Appendix~\ref{sec:add-exp}) have the common trait of not imposing hard constraints on the primal variables. As a consequence, we are able to take plain gradient steps in the primal domain. However, for instances such as Poisson linear inverse problems \citep{bertero2009image}, the iterates $x_k$ need to reside in $\R^n_+$ because the primal objective contains $\log$ functions. APDA cannot handle such cases, as any constraints imposed on the primal variables will only be satisfied asymptotically. We consider such scenarios as a future research direction. 

\newpage
\section*{Acknowledgements}
The first author is grateful to Ya-Ping Hsieh for his feedback on the manuscript and for helpful research discussions throughout the development of this work. The authors also sincerely thank the anonymous reviewers for their time and their thoughtful, constructive feedback which helped improve and clarify this manuscript.

This project has received funding from the European Research Council (ERC) under the European Union's Horizon 2020 research and innovation programme (grant agreement no. 725594 - time-data), the Wallenberg Al, Autonomous Systems and Software Program (WASP) funded by the Knut and Alice Wallenberg Foundation, with the project number 305286. The work was also sponsored by the Department of the Navy, Office of Naval Research (ONR)  under a grant number N62909-17-1-2111; by the Army Research Office and was accomplished under Grant Number W911NF-19-1-0404; by the Hasler Foundation Program: Cyber Human Systems (project number 16066). This work was also supported by the Swiss National Science Foundation (SNSF) under grant number 200021\_178865/1.

\bibliographystyle{plainnat}
\bibliography{references}

\newpage
\appendix
\section*{Appendix}

\section{Additional experiments}
\label{sec:add-exp}
\subsection{Nonconvex phase retrieval}
\label{subsec:nonconv-phaseret}
\begin{figure*}[h!]
\centering
\begin{minipage}[t]{\linewidth}
\setlength{\lineskip}{0pt}
\subfigure[]{
\includegraphics[width=.32\linewidth, height=3.3cm]{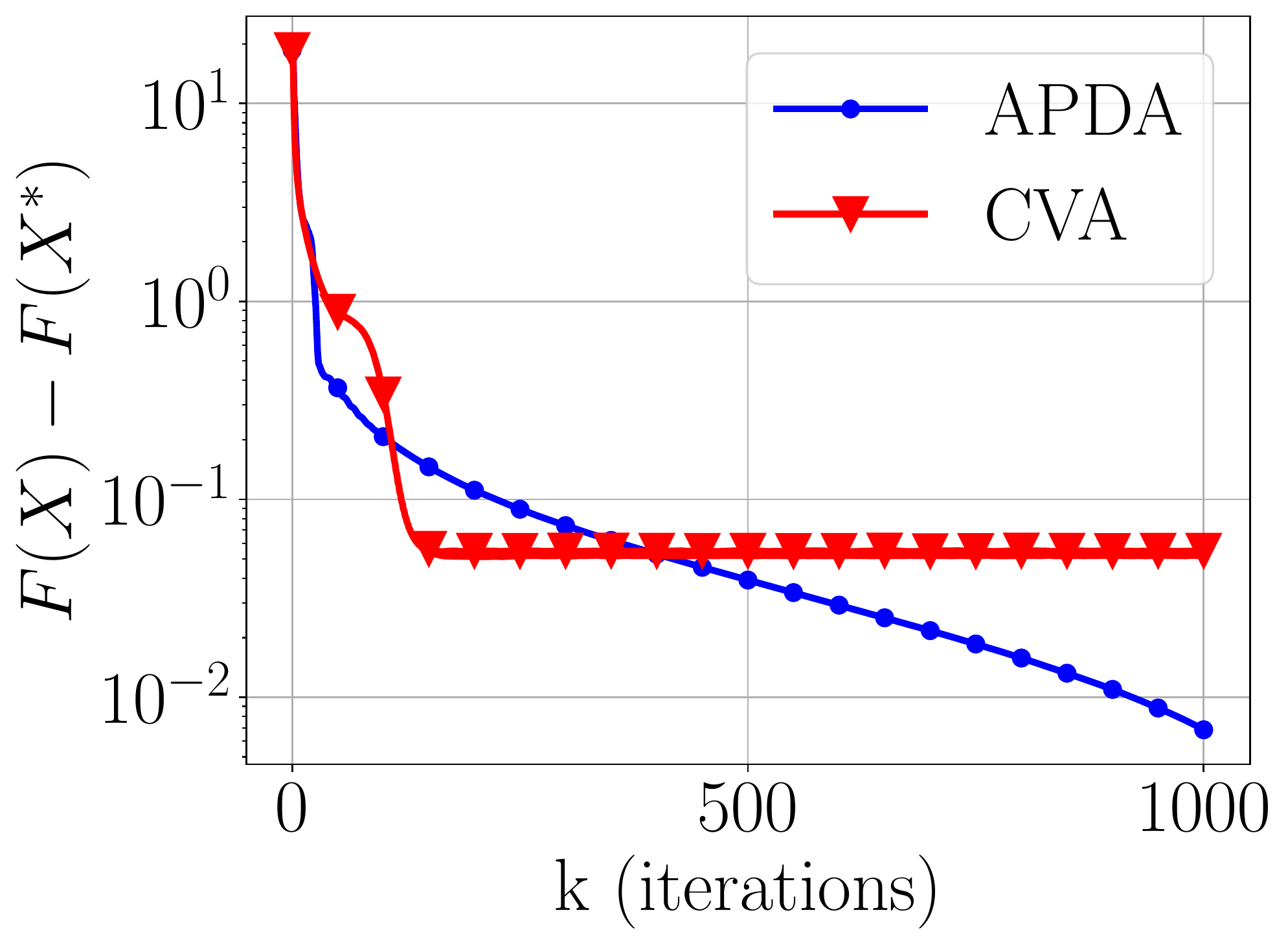}}
\subfigure[]{
\includegraphics[width=.32\linewidth, height=3.3cm]{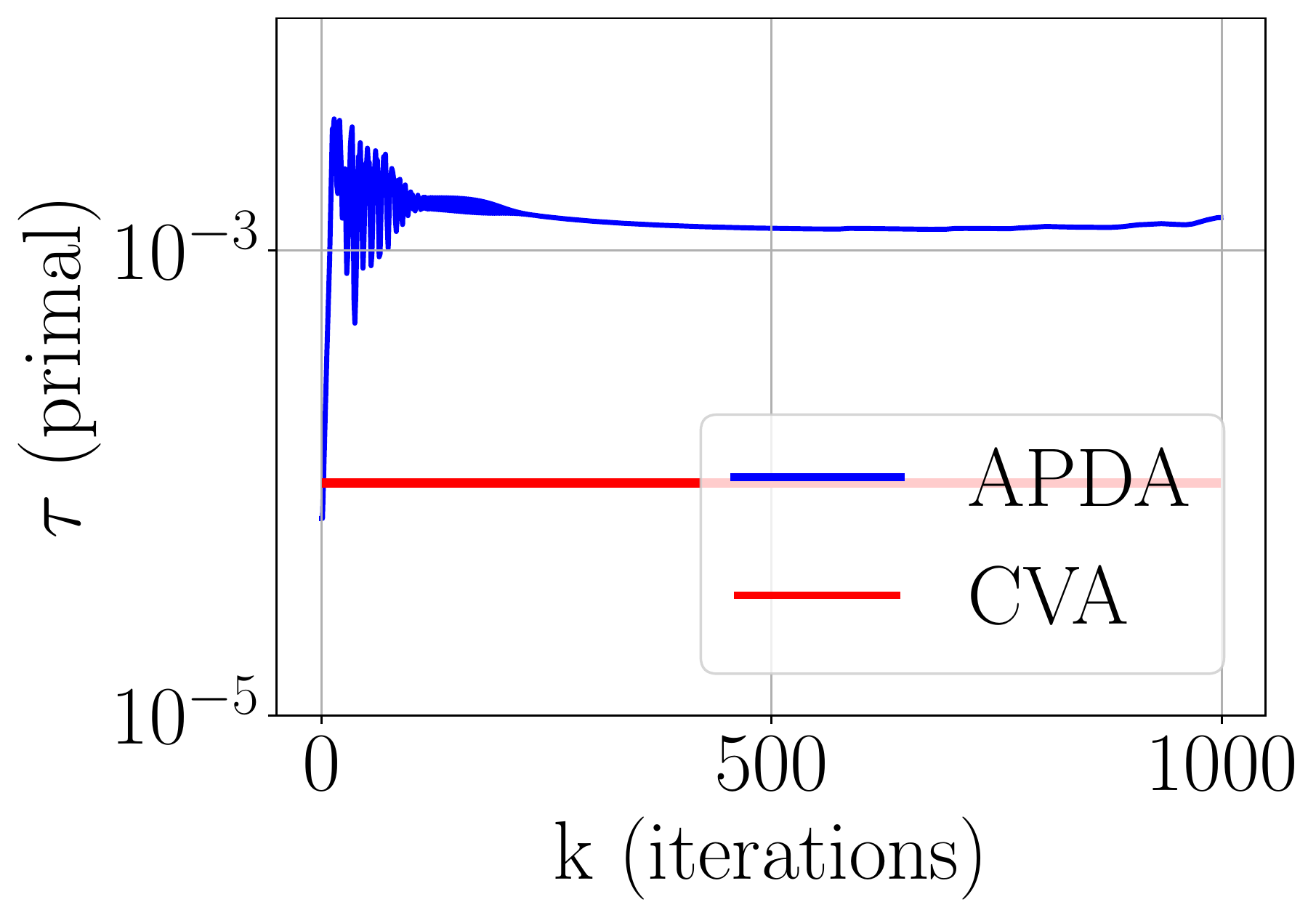}}
\subfigure[]{
\includegraphics[width=.32\linewidth, height=3.3cm]{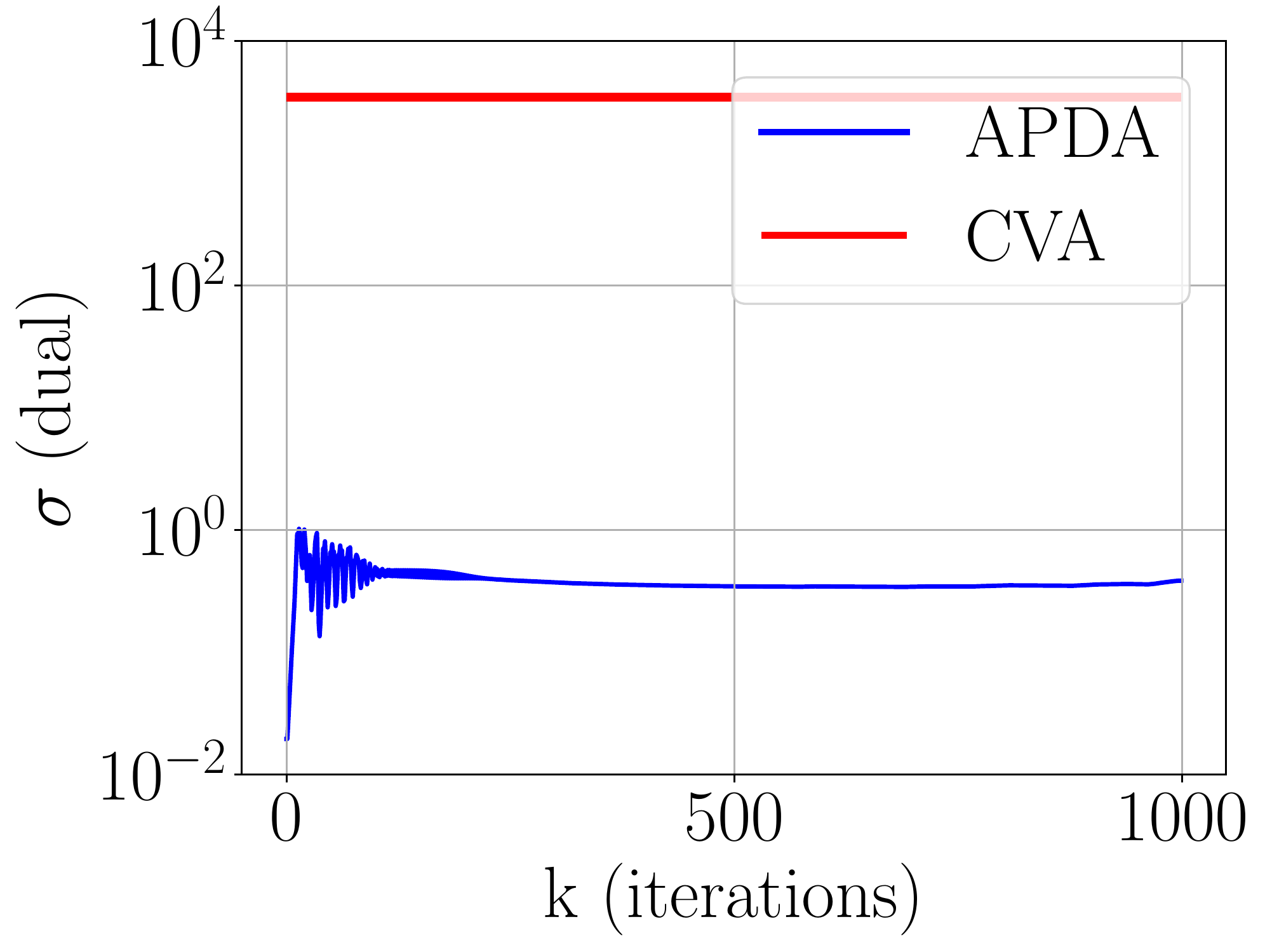}}\par
\centering
\subfigure[]{
\includegraphics[width=.24\linewidth]{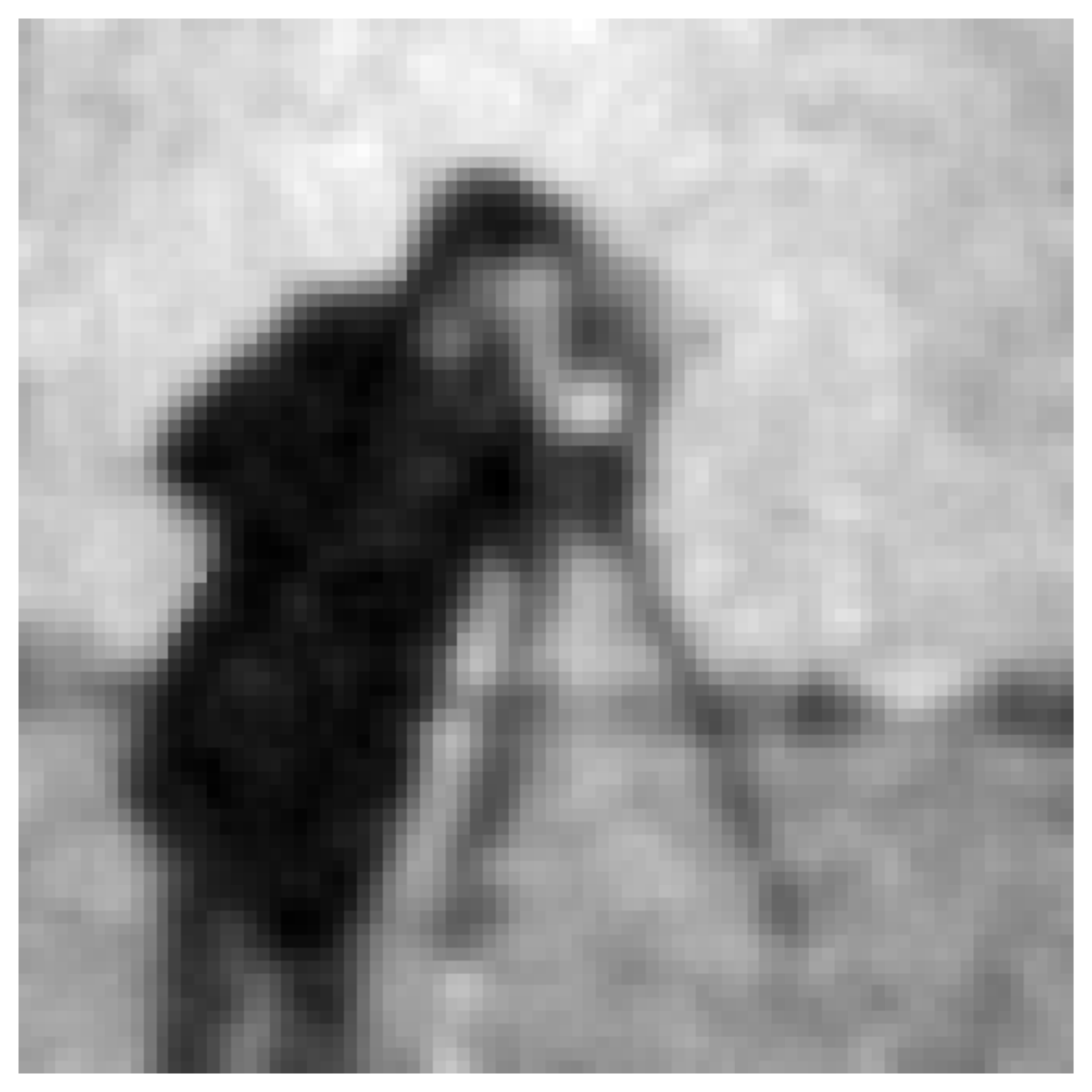}}
\subfigure[]{
\includegraphics[width=.24\linewidth]{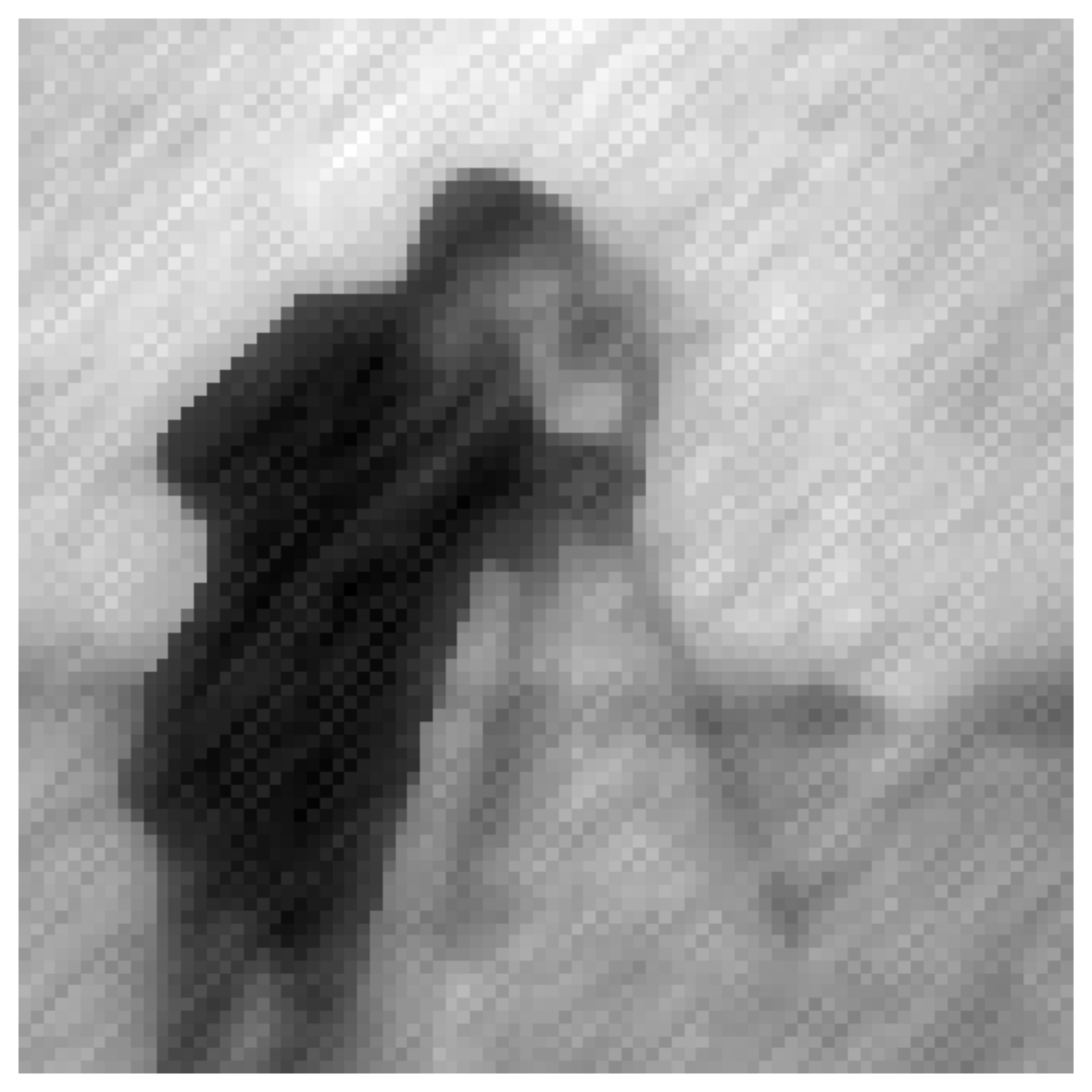}}
\caption{ (a) Convergence rate. 
(b) Primal stepsize comparison. (c) Primal stepsize comparison. (d) APDA reconstruction, PSNR = $21.34$, SSIM = $0.76$. (e) CVA reconstruction, PSNR = $20.56$, SSIM = $0.70$. }\label{fig:phaseret}
\end{minipage}
\end{figure*}
In this section we provide the results for applying our algorithm, heuristically, on the nonconvex least squares formulation of the phase retrieval (PR) problem. The phase-retrieval problem has attracted intense interest recently, due to its application is domains such as optical imaging~\citep{walther1963question}, astronomy~\citep{fienup1987phase} and many others. Here, we consider the real counterpart of the original complex PR formulation for square images, where given $\{(A_i, b_i) \in \R^{n\times n} \times \R\}$ we want to recover $X\in \R^{n\times n}$ up to its sign, such that $b_i = \mathrm{Tr}(A_i^T x)^2$. To this end, we consider the following optimization objective:
\begin{equation}
\label{eq:phase_ret}
\min_{X \in \R^{n \times n}} F(X) := \underbrace{\frac{1}{4m} \sum\limits_{i = 1}^m \left( b_i - \mathrm{Tr}(A_i^TX)^2 \right)^2}_{f(X)} + \underbrace{\lambda \|DX\|_{2, 1}}_{g(X) \equiv \|\cdot\|_{\text{TV}}}.
\end{equation}
We note a few things: first, objective~\eqref{eq:phase_ret} is nonconvex with $f$ being only locally smooth. Secondly,~\citet{sun2018geometric} have recently shown that given $m$ i.i.d Gaussian measurements, the global geometry of $F(X)$ is `benign' for $m > Cd\log(d)^3$, where $d$ is the problem dimension. By benign, the authors specifically mean `(1) there are no spurious local minimizers, and all global minimizers are equal to the target signal $x$ up to a global phase; and (2) the objective function has a negative directional curvature around each saddle point'. It  is posed  that in such cases iterative algorithms should, with high probability, find the minimizer without requiring special initialization as is needed for current state of the art solvers. 

For our experiments we use $84 \times 84$-sized images and choose a smaller number of measurements than suggested above: $m = d\log(d) \approx 27,155$. We generate $m$ sparse matrices $A_i \in R^{n \times n}$ with $30\%$ non-zero entries sampled i.i.d from the standard normal distribution, and corrupt a random subset containing $10\%$ of elements in $b_i$ by setting them to 0. We perform parameter sweep for $\lambda \in[\texttt{1e-4}, \texttt{1e4}]$, $\beta \in  [\texttt{1e-3}, \texttt{1e4}]$ and settle for $\lambda=\texttt{1e2}$ and $\beta = \texttt{2.78e2}$ . Without guidelines for setting $\tau$, $\sigma$ for CVA since $f$ is not $L$-smooth, we search for the best $\tau \in [\texttt{1e-4}, \texttt{1e4}]$ and $p \in [\texttt{1e-2}, \texttt{1e2}]$ such that $\sigma = \frac{1}{p\tau\norm{A}}$ and settle for $\tau = \texttt{1e-4}$, $p = \texttt{1.02}$. We note that CVA diverged for $32/40$ grid points, whereas our method converged for all instances. Finally, the initial points $x_0$ and $y_0$ are sampled from the standard normal distribution.

The results are depicted in Figure~\ref{fig:phaseret}, which contains the reconstructions and convergence plots. For each reconstruction we report the Peak Signal to Noise Ratio (PSNR) and the Structural Similarity Index Measure (SSIM). We tested several random seeds and obtained similar results. We also tried running CVA with the stepsize values used by APDA in its last iteration (notice how in Figure~\ref{fig:phaseret} (d) $\tau_k$ essentially stabilizes in a very narrow band just above $\texttt{1e-3}$ after the first $250$ iterations) --- however, CVA diverged in this setting as well.

\subsection{Image inpainting}
\label{subsec:exp-inpainting}
\begin{figure}[h!]
\centering
\begin{minipage}[t]{\textwidth}
\setlength{\lineskip}{0pt}
\subfigure[]{
  \includegraphics[width=.3\linewidth, height=3.5cm]{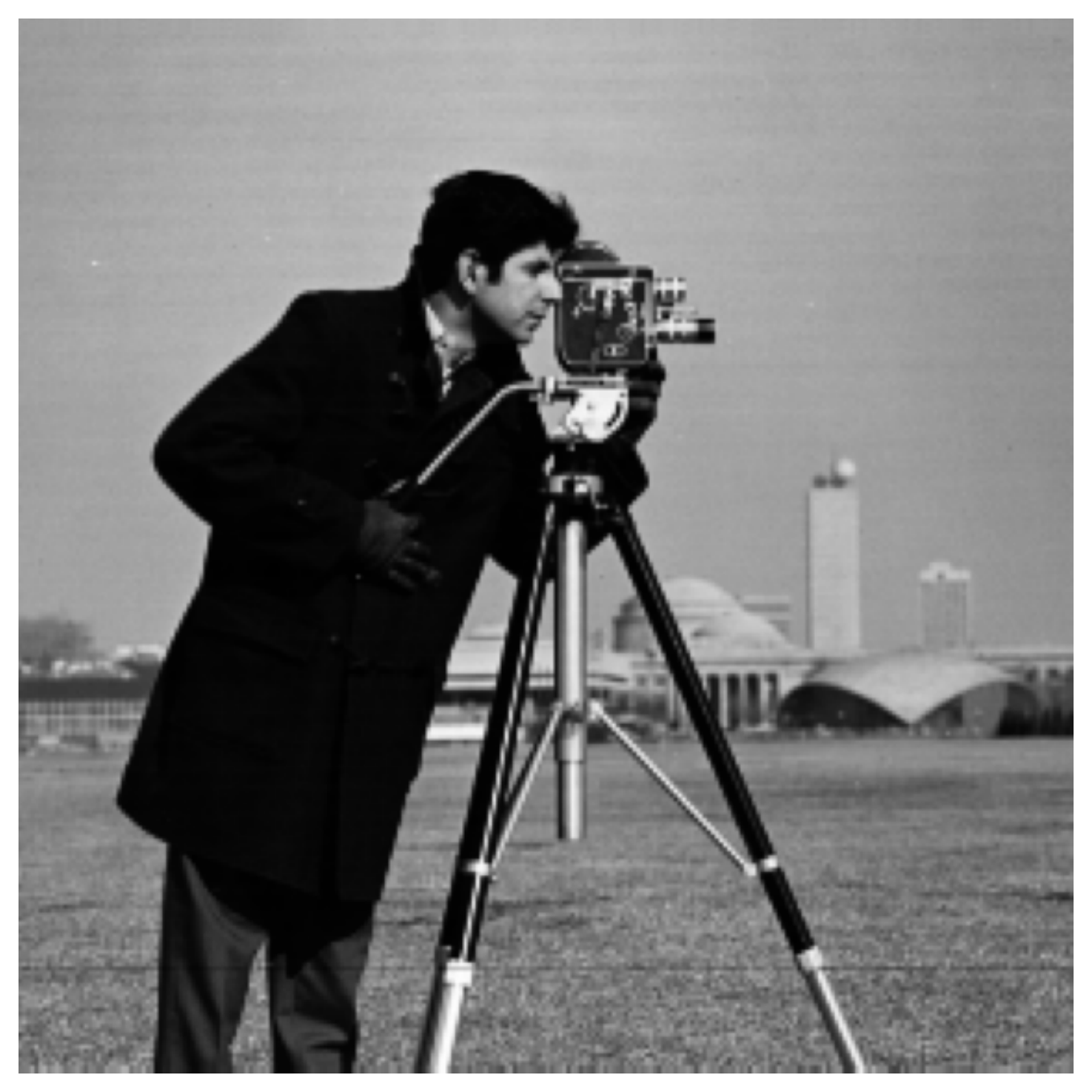}
  \label{lab:fig1a}}
\subfigure[]{  
  \includegraphics[width=.3\linewidth, height=3.5cm]{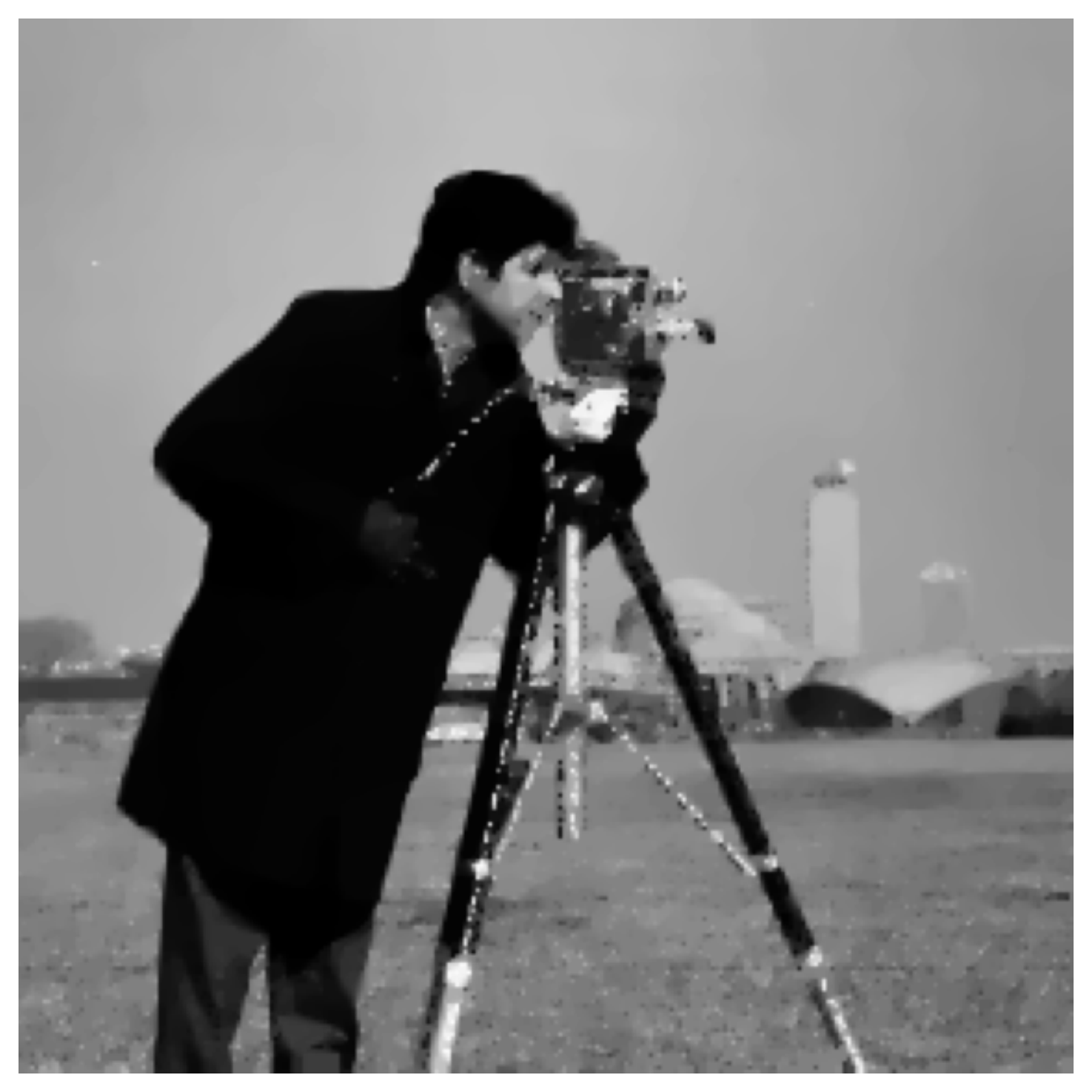}}
\subfigure[]{
  \includegraphics[width=.3\linewidth, height=3.5cm]{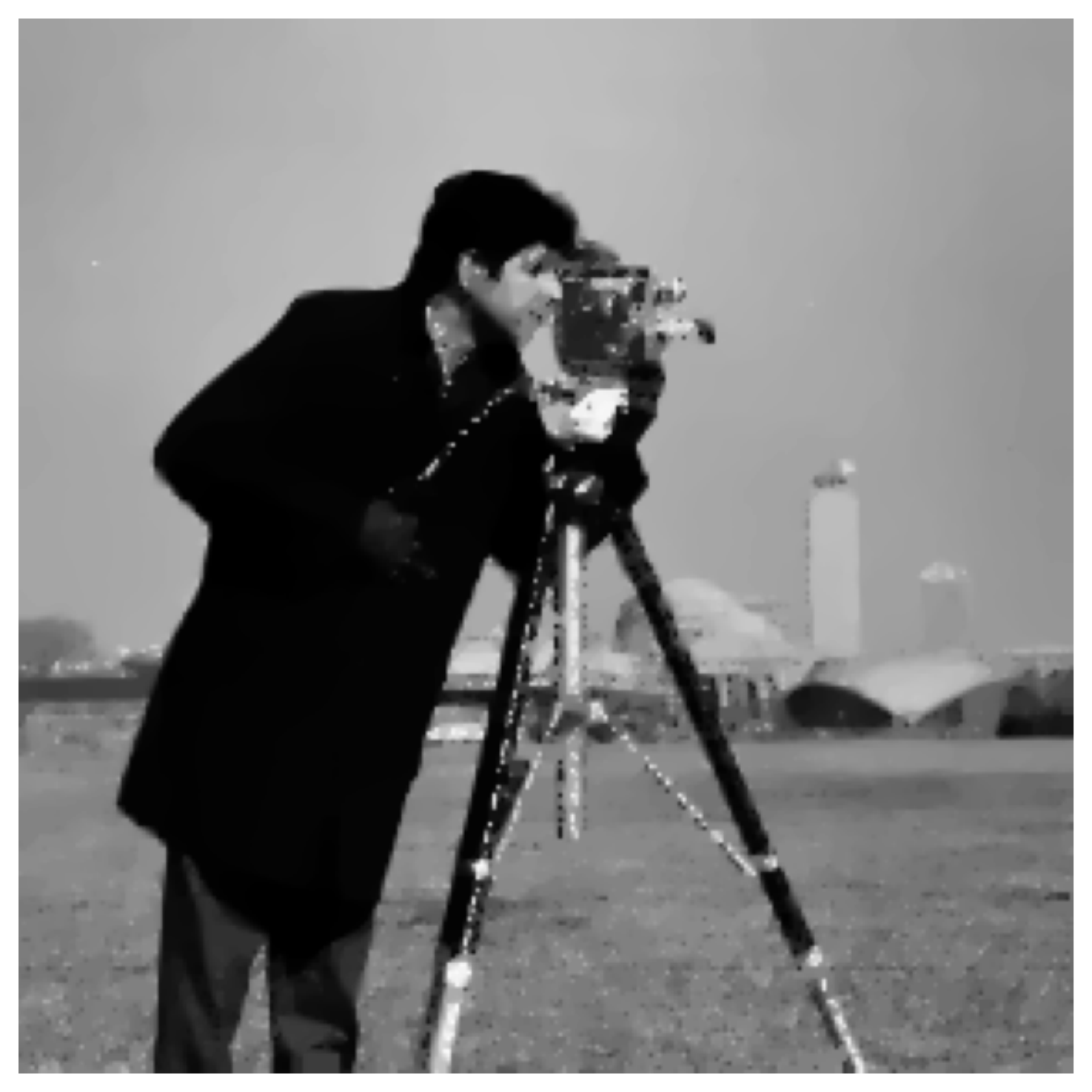}}\par
\subfigure[]{
  \includegraphics[width=.3\linewidth , height=3.5cm]{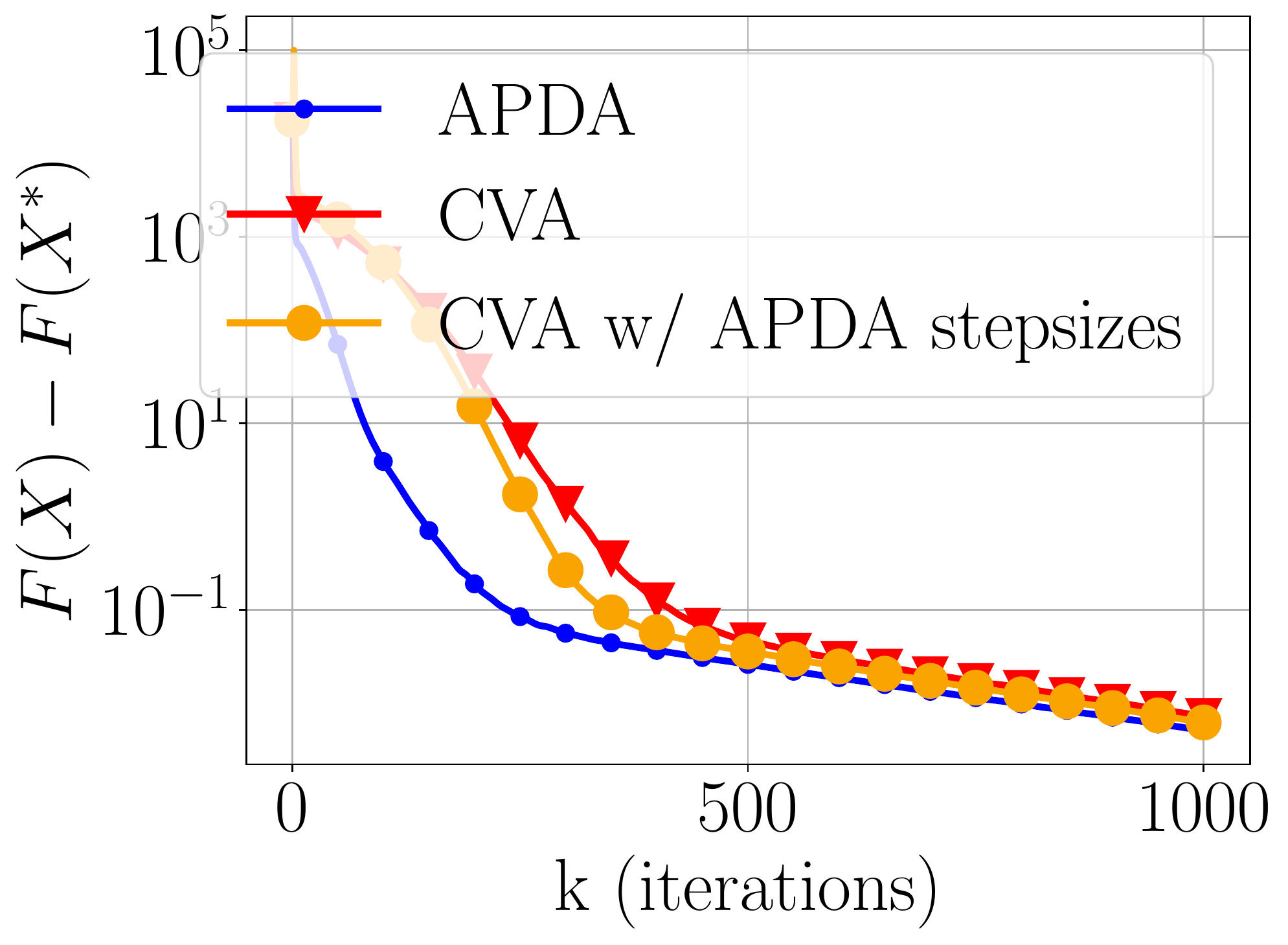}}
\subfigure[]{
    \includegraphics[width=.3\linewidth , height=3.5cm]{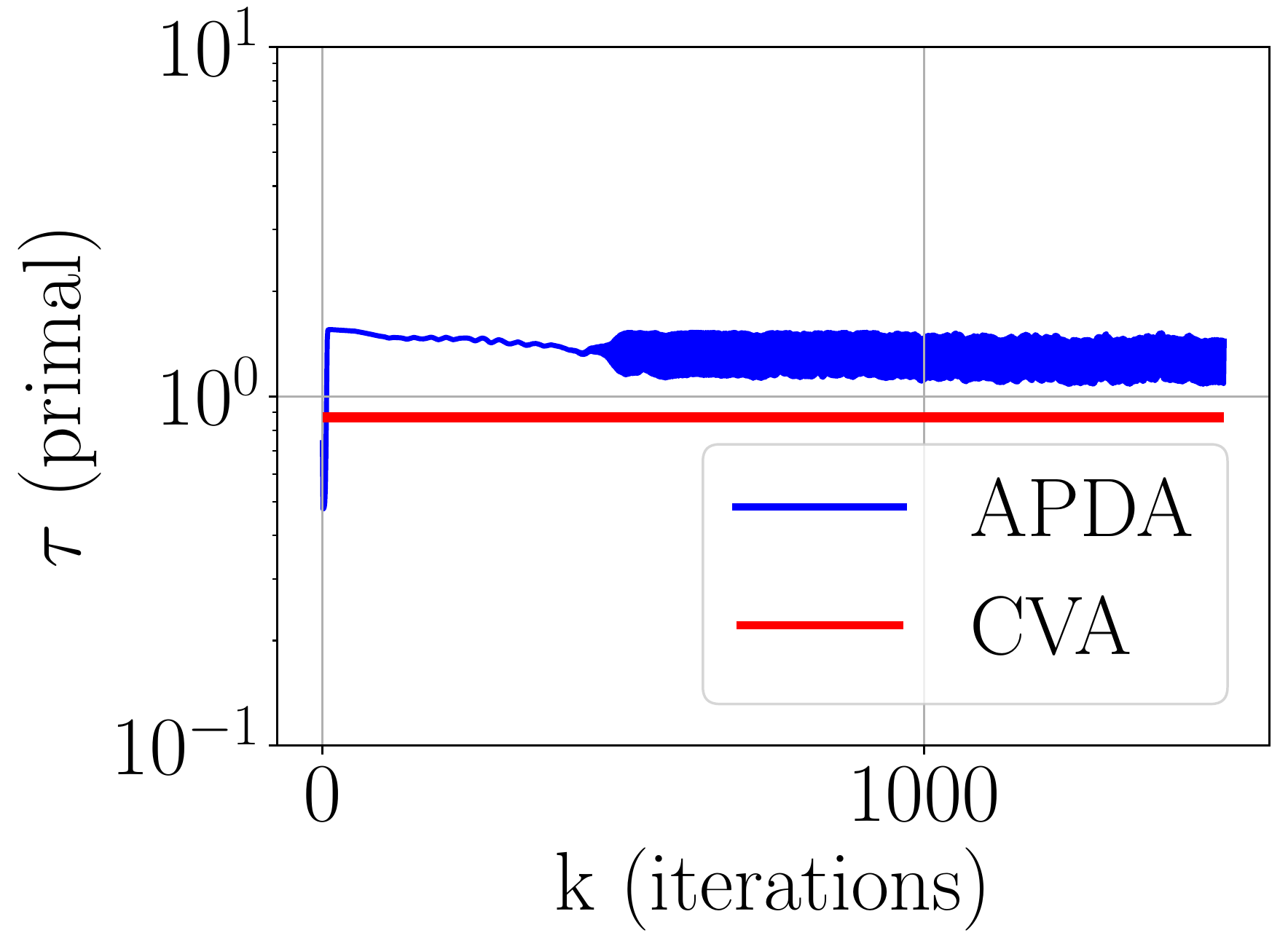}}
\subfigure[]{
    \includegraphics[width=.3\linewidth , height=3.5cm]{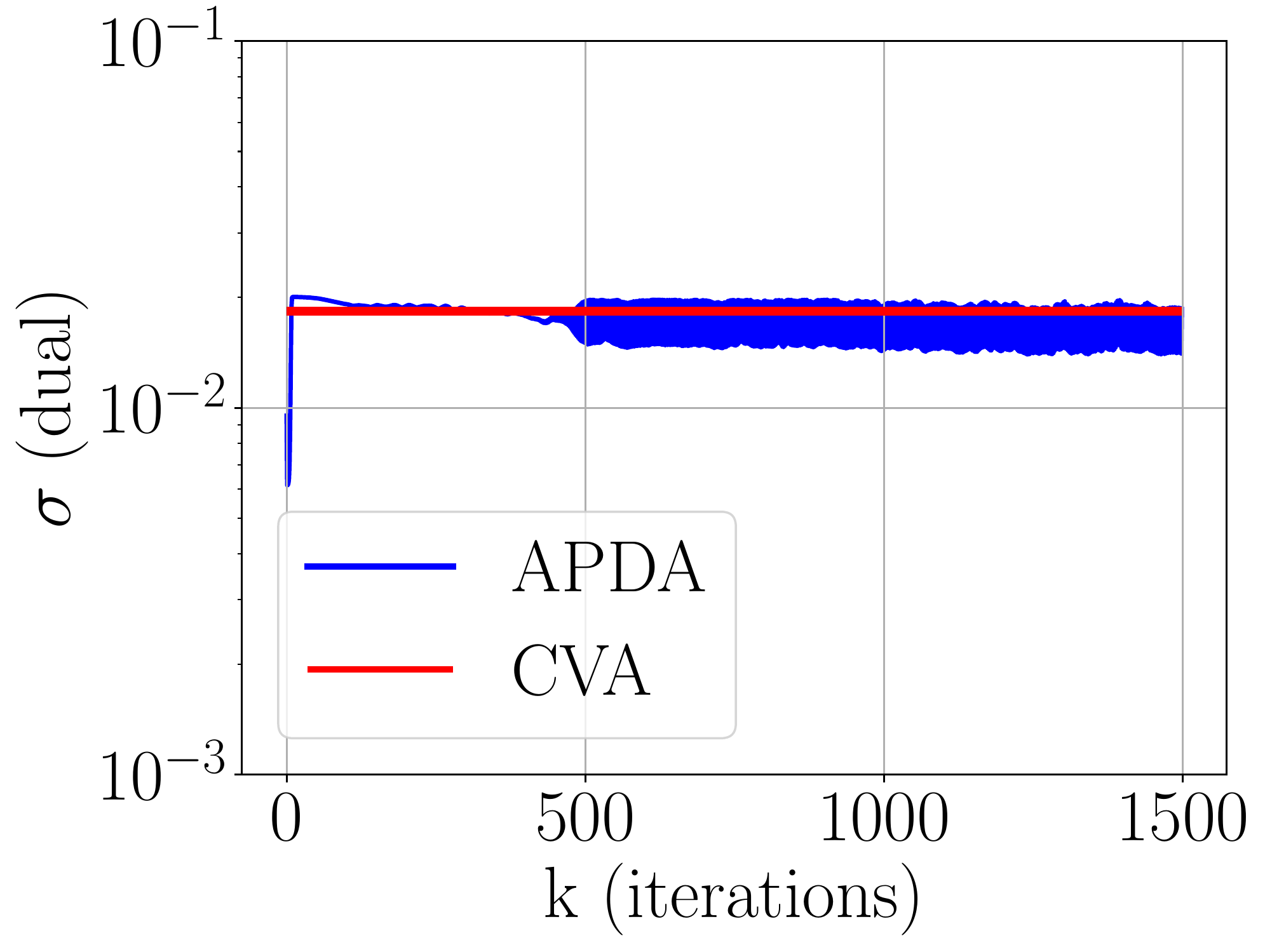}}
\caption{(a) Original image downloaded from \url{http://www.cs.tut.fi/~foi/GCF-BM3D/}. (b) APDA reconstruction, PSNR = $25.63$, SSIM = $0.91$. (c) CVA reconstruction, PSNR = $25.63$, SSIM = $0.91$. 
(d) Convergence rate. (e) Primal stepsize comparison. (f) Dual stepsize comparison.}\label{fig:conv1e4}
\end{minipage}
\end{figure}
\hspace{1.7mm}

Image inpainting consists in reconstructing the missing parts of a subsampled image $b = P_{\Omega} (X^{\natural})$, where $P_{\Omega}: \R^{m \times n} \to \R^{m \times n} $ is an operator that selects a subset of $q$ pixels from the original image $X^{\natural} \in \R^{m \times n}$, where $q \ll mn$. This problem can be formulated as a regularized optimization objective:
\begin{equation}
\label{eq:tv_inpainting}
\min_{X\in \R^{m \times n}} F(X) := \underbrace{\frac{1}{2}\|b - P_{\Omega}(X)\|_2^2}_{f(X)} + \underbrace{\lambda \|DX\|_{2, 1}}_{g(X) \equiv \|\cdot\|_{\text{TV}}},
\end{equation}
where $D: \R^{m \times n} \to \R^{m \times n \times 2}$ represents the discrete gradient operator, and $\|DX\|_{2, 1} = \sum\limits_{i,j = 1}^{m,n}\sqrt{(DX)_{i,j, 1}^2 + (DX)_{i,j, 2}^2}$. The regularization term represents the isotropic TV norm, which is known to help in recovering sharp images by preserving discontinuities and reducing noise \citep{chambolle2010introduction, condat2017discrete}.

For our experiments, we vectorize the images of size $256 \times 256$ and transform $D$ accordingly. We represent $P_{\Omega}$ as a matrix built by removing rows uniformly at random from $I$ and which removes $60\%$ of pixels from the original image (sampling ratio $0.4$). We perform parameter sweep for $\lambda \in[\texttt{1e-4}, \texttt{1e}]$, and settle for $\lambda = \texttt{1e-2}$. We also sweep $\beta \in  [\texttt{1e-5}, \texttt{1e}]$ and settle for $\beta = \texttt{1.291e-2}$. Finally, we perform a similar two-phase tuning for CVA as that described in Section~\ref{sec:exp} with  $p \in [\texttt{1e-5}, \texttt{1e3}]$ for the first phase and $\tau \in [\texttt{1e-5}, \texttt{1e2}]$, $\xi \in [\texttt{1e-5}, \texttt{1e1}]$ for the second phase. We settle  for stepsizes $\tau= \texttt{8.722e-1}$ and $\sigma = \texttt{1.831e-2}$.

Experiment results are presented in Figure~\ref{fig:conv1e4}, where we show the reconstructions, alongside the convergence plot and a comparison of the fixed stepsizes of CVA with those of APDA. The two algorithms are comparable both in terms of reconstruction quality and convergence speed, with APDA being marginally better for the latter criterion. The convergence plot also shows an instance of CVA whose stepsizes were set to the values of those used by APDA in the final iteration of these experiments. Finally, subfigures (e) and (f) show APDA's stepsizes oscillating within close range of CVA's.

\section{Missing proofs}

\subsection{Proof of Lemma~\ref{lem:recurrence-adaptive2}}
\LemEnergy*
\begin{proof}
Using the primal update rule, we have
\begin{align}
\normsqr{x_{k+1} - x} &= \normsqr{x_{k} - x} + \normsqr{x_{k+1} - x_k} -2\tau_k\langle \nabla f(x_k) + A^Ty_{k+1}, x_k - x\rangle.\label{eq:primal-update-rule-adapt2}
\end{align}

We address each term in the RHS separately. Using the convexity of $f$ we bound the last term of ~\eqref{eq:primal-update-rule-adapt2}:
\begin{align}
    -2\tau_k\langle \nabla f(x_k) + A^Ty_{k+1}, x_k - x\rangle &\leq 2\tau_k \left(f(x) - f(x_k) \right) + 2\tau_k\langle A(x - x_k), y_{k+1}\rangle. \label{eq:adapt2-step1-convexity}
\end{align}

For the second term of \eqref{eq:primal-update-rule-adapt2} we use an expansion similar to the analysis in~\citep{malitsky2020adaptive} along with the primal update rule:
\begin{align}
    \normsqr{x_{k+1} - x_k} &= 2\normsqr{x_{k+1} - x_k} - \normsqr{x_{k+1} - x_k}  \nn\\[3mm]
&\hspace{-5mm}=    2\tau_k \langle\nabla f(x_k) + A^Ty_{k+1}, x_k - x_{k+1} \rangle - \normsqr{x_{k+1} - x_k}  \nn\\[3mm]
    &\hspace{-5mm}= 2\tau_k \langle\nabla f(x_k) - \nabla f(x_{k-1}), x_k - x_{k+1} \rangle  + 2\tau_k\langle A^Ty_{k+1} - A^Ty_k, x_k - x_{k+1} \rangle \nn\\
                        &\hspace{20mm}  +2\tau_k\langle\nabla f(x_{k-1}) + A^Ty_k, x_k - x_{k+1}\rangle  - \normsqr{x_{k+1} - x_k}\label{eq:adapt2-step11}.
\end{align}

Notice that the first term in~\eqref{eq:adapt2-step11} gives us the opportunity to insert a dependence on the local Lipschitz constant $L_k$. Using Cauchy-Schwarz, the definition of $L_k$ and Young's inequality, we indeed take this opportunity and get:
\begin{align}
\langle\nabla f(x_k) - \nabla f(x_{k-1}), x_k - x_{k+1} \rangle &\leq L_k  \norm{x_k - x_{k-1}} \norm{x_{k+1} - x_k} \nn\\
        &\leq \frac{L_k}{2} \left( \normsqr{x_k - x_{k-1}} + \normsqr{x_{k+1} - x_k }\right). \label{eq:2step1-bound1}
\end{align}

Similarly, we bound the second term in ~\eqref{eq:adapt2-step11} and obtain:
\begin{align}
\langle A^Ty_{k+1} - A^Ty_k,  x_k- x_{k+1}\rangle &\leq \frac{\norm{A}\eta}{2}\normsqr{x_{k+1} - x_k} + \frac{\norm{A}}{2\eta}\normsqr{y_{k+1} - y_k}, \label{eq:2step1-bound2}
\end{align}
where $\eta > 0$ is a free parameter coming from Young's inequality.

Finally, for the third term in~\eqref{eq:adapt2-step11} we use the update rule and the convexity of $f$:
\begin{align}
\langle\nabla f(x_{k-1}) + A^Ty_k, x_k - x_{k+1}\rangle &= \langle\frac{1}{\tau_{k-1}}(x_{k-1} - x_k), \tau_k(\nabla f(x_{k}) + A^Ty_{k+1})\rangle \nn\\
            &\leq \theta_k\left( f(x_{k-1})- f(x_k)\right) +\theta_k\langle A(x_{k-1} - x_k), y_{k+1}\rangle.\label{eq:2step1-bound3}
\end{align}

Replacing~\eqref{eq:2step1-bound1},~\eqref{eq:2step1-bound2} and ~\eqref{eq:2step1-bound3} into~\eqref{eq:adapt2-step11}, we get
\begin{align}
    \normsqr{x_{k+1} - x_k} &\leq \tau_k L_k\normsqr{x_k - x_{k-1}} + \left(\tau_k\norm{A}\eta + \tau_k L_k - 1\right)\normsqr{x_{k+1} - x_k}  \nn \\
    &\hspace{15mm} + \frac{\tau_k\norm{A}}{\eta}\normsqr{y_{k+1} - y_k} +2\tau_k\theta_k\left( f(x_{k-1})- f(x_k)\right) \nn \\
        &\hspace{47mm}+2\tau_k\theta_k\langle A(x_{k-1} - x_k), y_{k+1}\rangle. \label{eq:2step1-fin}
\end{align}

Finally, replacing~\eqref{eq:2step1-fin}~and~\eqref{eq:adapt2-step1-convexity} back into \eqref{eq:primal-update-rule-adapt2} and using the fact that $\theta_k\langle A(x_{k-1} - x_k), y_{k+1}\rangle + \langle A(x  - x_k), y_{k+1}\rangle = - \langle A(\tilde{x}_{k} - x), y_{k+1}\rangle $, we obtain the inequality for the primal iterate sequence:
\begin{align}
    \normsqr{x_{k+1} - x} &\leq \normsqr{x_{k} - x} + \tau_k L_k\normsqr{x_k - x_{k-1}} + \left(\tau_k\norm{A}\eta + \tau_k L_k - 1\right)\normsqr{x_{k+1}- x_k}  \nn \\
    &\hspace{3mm} + \frac{\tau_k\norm{A}}{\eta}\normsqr{y_{k+1} - y_k}  +2\tau_k\theta_k\left( f(x_{k-1})- f(x_k)\right) + 2\tau_k\left( f(x) - f(x_k)\right) \nn\\
    &\hspace{72mm}  -2\tau_k\langle A(\tilde{x}_{k} - x), y_{k+1}\rangle. \label{eq:2primal-sequence-pre-gap}
\end{align}

We now seek a similar result for the dual sequence. For this, we use the following characterization of the proximal operator:
\begin{equation}
    u = \prox_{g^*}(x) \iff \langle u-x, z - u \rangle \geq g^*(u) - g^*(z) \; \forall z. \label{eq:prox-property}
\end{equation}

Thus, letting $u = y_{k+1}$, $x = y_k$ and $z = y$ in ~\eqref{eq:prox-property}, we obtain:
\begin{align}
    g^*(y) &\geq g^*(y_{k+1}) + \langle \frac{1}{\sigma_k}(y_k - y_{k+1}), y - y_{k+1}\rangle + \langle A\tilde{x}_k, y - y_{k+1}\rangle. \nn
\end{align}

Using the cosine rule for the second term, the fact that  $\sigma_k = \beta \tau_k$ and multiplying both sides by $2\tau_k > 0$, we  obtain:
\begin{align}
\frac{1}{\beta} \normsqr{y_{k+1} - y} &\leq \frac{1}{\beta}\normsqr{y_k - y} -\frac{1}{\beta} \normsqr{y_{k+1} - y_k} + 2\tau_k ( g^*(y) - g^*(y_{k+1}) )\nn\\
&\hspace{58mm} + 2\tau_k\langle A\tilde{x}_k,  y_{k+1} - y\rangle. \label{eq:adapt2-dual-iterate}
\end{align}

Summing \eqref{eq:adapt2-dual-iterate} with ~\eqref{eq:2primal-sequence-pre-gap} we obtain the following recurrence:
\begin{align}
\normsqr{x_{k+1} - x} + \frac{1}{\beta} \normsqr{y_{k+1}- y} & \nn\\
&\hspace{-30mm}\leq\normsqr{x_{k} - x} +  \frac{1}{\beta}\normsqr{y_k - y} + \tau_k L_k\normsqr{x_k - x_{k-1}} \nn\\
&\hspace{-25mm} + \left(\tau_k\norm{A}\eta + \tau_k L_k- 1\right)\normsqr{x_{k+1} - x_k}  + \left( \frac{\tau_k\norm{A}}{\eta} - \frac{1}{\beta}\right) \normsqr{y_{k+1} - y_k} \nn \\
    &\hspace{-15mm}  +2\tau_k\left( \theta_k\left( f(x_{k-1})- f(x_k) \right)+  f(x) - f(x_k) + g^*(y) - g^*(y_{k+1}\right)\nn\\
    &\hspace{35mm} +2\tau_k\langle Ax,  y_{k+1}\rangle -2\tau_k\langle A\tilde{x}_{k}, y\rangle. \label{eq: sequences-pre-final}
\end{align}

We further process the terms involving $f$and $g^*$ on the right-hand side in order to form the $P_{x, y}(\cdot)$ and $D_{x, y}(\cdot)$:
\begin{align*}
 f(x) - f(x_k)  &= - P_{x, y}(x_k)+ \langle A(x_k - x), y\rangle,\\[3mm]
\theta_k(f(x_{k-1}) - f(x_k)) &=\theta_k P_{x, y}(x_{k-1}) - \theta_kP_{x, y}(x_k) + \langle \theta_k A(x_k - x_{k-1}), y\rangle,\\[3mm]
g^*(y) - g^*(y_{k+1}) &= -D_{x, y}(y_{k+1}) - \langle Ax, y_{k+1} - y\rangle.
\end{align*}

Replacing the above expressions into ~\eqref{eq: sequences-pre-final} and noting that $\langle A(\tilde{x}_k -x), y\rangle - \langle Ax, y_{k+1} - y\rangle+\langle Ax,  y_{k+1}\rangle -\langle A\tilde{x}_{k}, y\rangle = 0$, we obtain:
\begin{align}
\normsqr{x_{k+1} - x} + \frac{1}{\beta} \normsqr{y_{k+1}-y} + \left(1 -\tau_k\norm{A}\eta - \tau_k L_k\right)\normsqr{ x_{k+1} - x_k } \nn\\
&\hspace{-100mm}+ \left(\frac{1}{\beta} - \frac{\tau_k\norm{A}}{\eta}\right)  \normsqr{y_{k+1} - y_k}  + 2\tau_k(1+\theta_k)  P_{x, y}(x_{k}) + 2\tau_kD_{x, y}(y_{k+1})\nn\\[2mm]
&\hspace{-90mm}\leq \normsqr{x_{k} - x} +  \frac{1}{\beta}\normsqr{y_k - y} + \tau_k L_k\normsqr{x_k - x_{k-1}} +2\tau_k\theta_k P_{x, y}(x_{k-1}).\label{eq:sequences-before-epsilon-delta}  
\end{align}

What is left to do for obtaining the stated result is to choose $\eta$, possibly depending on $k$, such that the corresponding terms are positive. First, note that $\tau_k L_k\normsqr{x_k - x_{k-1}} < \frac{1}{2}\normsqr{x_k - x_{k-1}}$ because $z \mapsto \frac{z}{2\sqrt{z^2 + a}}, \; a >0$ is an increasing function whose limit at $\infty$ is $\frac{1}{2}$ and  we have:
\begin{equation}
\tau_k L_k \leq \frac{L_k}{2\sqrt{L_k^2 + (\beta/(1-c))\normsqr{A}}} < \frac{1}{2}. \label{eq:tauk-lk-less-than-1-over-2}
\end{equation}

Next we need to choose $\eta = \eta_k$ (iteration-dependent) to satisfy:
\begin{align}
\begin{cases}
\frac{1}{\beta}  - \frac{\tau_k\norm{A}}{\eta_k} > 0 ,\\[2mm]
1 -\tau_k\norm{A}\eta_k - \tau_k L_k  > \frac{1}{2} .
\end{cases} \nn
\end{align}

However, for theoretical purposes related to controlling the sequence $\normsqr{y_{k+1} - y_k}$, we strengthen the first inequality to $\displaystyle \frac{1}{\beta}  - \frac{\tau_k\norm{A}}{\eta_k} > \frac{c}{\beta}$, $\,c \in (0,1)$. In practice, this constant is chosen as small as possible. The new conditions to be satisfied are:
\begin{align}
\begin{cases}
\frac{1}{\beta}  - \frac{\tau_k\norm{A}}{\eta_k} > \frac{c}{\beta}, \\[2mm]
1 -\tau_k\norm{A}\eta_k - \tau_k L_k  > \frac{1}{2}, 
\end{cases} \overset{}{\iff} \begin{cases}
\eta_k > \frac{\beta \tau_k\norm{A}}{1-c},   \\[2mm]
\eta_k < \frac{1 -2\tau_kL_k }{2\tau_k\norm{A}}.
\end{cases} \label{eq:epsilon-conditions}
\end{align}

The question we need to answer therefore is: given the  expression of $\tau_k$, is the interval always valid for choosing $\displaystyle \eta_k \in \left( \frac{\beta \tau_k\norm{A}}{1-c}, \;\frac{1 -2\tau_kL_k }{2\tau_k\norm{A}}\right)$? 

To answer, we form the corresponding quadratic inequality in $\tau_k$:
\begin{equation}
    \frac{\beta \tau_k\norm{A}}{1-c} - \frac{1 -2\tau_kL_k }{2\tau_k\norm{A}}  <   0 \iff \frac{2\beta \tau_k^2\normsqr{A}}{1-c}  + 2\tau_k L_k - 1 <0, \label{eq:quad-ineq-tau-k}
\end{equation}
whose 2 real roots are  given by:
\begin{align}
\begin{cases}
\tau_{k,1} = \frac{1}{L_k - \sqrt{L_k^2 + 2(\beta/(1-c))\normsqr{A}}}\;\; < 0, \\[3mm]
\tau_{k, 2} = \frac{1}{L_k + \sqrt{L_k^2 + 2(\beta/(1-c))\normsqr{A}}}\;\; > 0.
\end{cases} \nn
\end{align}

For inequality~\eqref{eq:quad-ineq-tau-k} to be satisfied, we need: 
\begin{equation}
\tau_k \in (0, \tau_{k, 2})= \left( 0,  \, \frac{1}{L_k + \sqrt{L_k^2 + 2(\beta/(1-c))\normsqr{A}}}\right), \; \forall k. \label{eq:tauk-valid-interval}
\end{equation}

The lower bound for $\tau_k$ trivially holds, and for the upper bound we make the following observation:
\begin{align*}
L_k + \sqrt{L_k^2 + 2(\beta/(1-c))\normsqr{A}} &= \frac{2\left[ \sqrt{L_k^2} + \sqrt{L_k^2 + 2(\beta/(1-c))\normsqr{A}}\right]}{2} \\
&\overset{\text{Jensen}}{<} 2\sqrt{\frac{2L_k^2+ 2(\beta/(1-c))\normsqr{A} }{2}} \\
&= 2\sqrt{L_k^2+ (\beta/(1-c))\normsqr{A} }.
\end{align*}
Here Jensen's inequality holds strictly because function $\sqrt{\cdot}$ is strictly concave and $L_k^2 \neq L_k^2 +2\normsqr{A}\beta$. Thus, we obtain:
\begin{equation*}
0 < \tau_k \leq \frac{1}{2\sqrt{L_k^2 + \normsqr{A}\beta}} < \frac{1}{L_k + \sqrt{L_k^2 + 2\beta\normsqr{A}}} = \tau_{k, 2}\;\; \forall k.
\end{equation*}

It follows that we can find an $\displaystyle \eta_k \in \left(\frac{\beta \tau_k\norm{A}}{1-c}, \;\frac{1 -2\tau_kL_k }{2\tau_k\norm{A}}\right), \, \forall k$, which implies that conditions~\eqref{eq:epsilon-conditions} can always be satisfied. This concludes  the proof. \qedhere

\end{proof}

\subsection{Proof of Theorem~\ref{thm:conv-base-case}}
\ThmBaseCase*

\begin{proof}

\medskip 

\textbf{1) Sequence boundedness.} Using the inequality of Lemma~\eqref{lem:recurrence-adaptive2} with $(x, y) = (\xopt, \yopt)$ and the fact that $\tau_kL_k < \frac{1}{2}, \; \forall k$, unrolling it over the iterations and rearranging the terms we obtain:
\begin{align}
\normsqr{x_{k+1} - \xopt} + \frac{1}{\beta} \normsqr{y_{k+1} -  \yopt} + \left(1 -\eta_k\tau_k\norm{A} -\tau_k L_k \right)\normsqr{ x_{k+1} - x_k}&\nn\\
&\hspace{-110mm}+ \sum\limits_{i=1}^{k-1} \left(\frac{1}{2} -\eta_i\tau_i\norm{A} -\tau_i L_i \right)\normsqr{x_{i+1} - x_i } + \frac{c}{\beta} \sum\limits_{i=1}^k\normsqr{y_{i+1} - y_i}  + 2 \tau_k(1+\theta_k)  P_{\xopt, \yopt}(x_{k})   \nn\\
&\hspace{-100mm} + 2\sum\limits_{i=2}^{k-1} \left(\tau_i(1+\theta_i) - \tau_{i+1}\theta_{i+1}\right)  P_{\xopt, \yopt}(x_{i}) + 2\sum\limits_{i=1}^k\tau_iD_{\xopt, \yopt}(y_{i+1})\nn\\[2mm]
&\hspace{-90mm}\leq \normsqr{x_{1} - \xopt} +  \frac{1}{\beta}\normsqr{y_1 - \yopt} + \frac{1}{2}\normsqr{x_1 - x_{0}} +2\tau_1\theta_1 P_{\xopt, \yopt}(x_{0}). \label{eq:thm-unrolled-recurrence}
\end{align}

All the terms on the left  hand-side of~\eqref{eq:thm-unrolled-recurrence} are non-negative:
\begin{empheq}[left={\empheqlbrace}]{alignat=1}
&\frac{1}{\beta}  - \frac{\tau_k\norm{A}}{\eta_i} > \frac{c}{\beta} >0, \;\forall i, \tag{by Lemma~\ref{lem:recurrence-adaptive2}}\\[2mm]  
&\frac{1}{2} -\eta_i\tau_i\norm{A} -\tau_i L_i> 0,  \;\forall i, \tag{by Lemma~\ref{lem:recurrence-adaptive2}}\\[2mm]  
&\tau_{i+1}\theta_{i+1} \leq \tau_{i}\sqrt{1 + \theta_i} \;\theta_{i+1}\leq \tau_{i}(1 + \theta_{i}), \tag{by stepsize update rule}\\[2mm]  
&P_{\xopt, \yopt}(x) \geq 0, \; D_{\xopt, \yopt}(y) \geq 0 \;\; \forall x, y.  \tag{by the saddle point property}
\end{empheq}

Also, by our parameter setup  we have that $\theta_1 = 0$. Consequently, it holds that:
\begin{align*}
\normsqr{x_{k+1} - \xopt} + \frac{1}{\beta} \normsqr{ y_{k+1}- \yopt} \leq M <\infty \; \; \forall k,
\end{align*}
where $  M := \normsqr{x_{1} - \xopt} +  \frac{1}{\beta}\normsqr{y_1 - \yopt} + \frac{1}{2}\normsqr{x_1 - x_{0}}$, which implies that the sequence is bounded.

We make the following remarks which will be useful for the remainder of the theorem's proof:
\begin{itemize}
    \item Boundedness of $\{x_k\}$ together with the local Lipschitz continuity of $\nabla f$ from Assumption~\ref{ass:regularity} implies that there exists $L > 0$ such that $f$ is $L$-smooth over $\overline{\text{Conv}}(\{x^*, x_0, x_1, \ldots \})$.  Furthermore, $L \geq L_k \; \forall k$. 
    \item A consequence of the prior point is that $\tau_k$ has a uniform and positive lower-bound. By the definition of APDA it holds that: 
    \begin{equation*}
        \tau_1 = \frac{1}{2\sqrt{L_1^2 + (\beta/(1-c))\normsqr{A}}} \geq \frac{1}{2\sqrt{L^2 + (\beta/(1-c))\normsqr{A}}}
    \end{equation*}    
    and, from the definition of $\tau_k$ it is straightforward to see that at every iteration we either explicitly increase $\tau_k$ relative to $\tau_{k-1}$ or otherwise set it to an expression  dictated by the local smoothness constant $L_k$. Thus it holds that:
\begin{equation}
\tau_k \geq \frac{1}{2\sqrt{L^2 + (\beta/(1-c))\normsqr{A}}}, \; \forall k. \label{eq:tauk-unif-lowerbd}
\end{equation}
    \item Furthermore, the existence of $L$ guarantees that $\tau_kL_k$ can have a tighter upper bound than the $1/2$ shown before, as follows:
\begin{align}
\tau_kL_k &\leq \frac{L_k}{2\sqrt{L_k^2 + (\beta/(1-c))\normsqr{A}}} \nn \\
&\leq \frac{L}{2\sqrt{L^2 + (\beta/(1-c))\normsqr{A}}},\label{eq:taukLk-boundedness}.
\end{align}
where we used the fact  that $z \mapsto \frac{z}{2\sqrt{z^2 + a}}, \, a > 0$ is an increasing function.
        \item Finally, due to the point above, we can uniformly  lower bound the coefficients of terms $\normsqr{x_{k+1} - x_k}$ on the LHS of~\eqref{eq:thm-unrolled-recurrence}, and thus obtain: 
        \begin{align*}
\frac{1}{2}\left(1 - \frac{L}{\sqrt{L^2 + (\beta/(1-c))\normsqr{A}}}\right) \sum\limits_{i=1}^{k-1} \normsqr{x_i - x_{i+1}} + \frac{c}{\beta} \sum\limits_{i=1}^k\normsqr{y_{i+1} - y_i} \leq M,
    \end{align*}
    which  conveniently ensures that:
    \begin{align}
\begin{cases}
\lim\limits_{k\to\infty}\normsqr{x_k - x_{k-1}} = 0,\\[1mm]
\lim\limits_{k\to\infty}\normsqr{y_k - y_{k-1}}= 0.
\end{cases} \label{eq:cauchy}
\end{align}
\end{itemize}

\medskip

\textbf{2) Convergence to a saddle point.}
Let $(\hat{x}, \hat{y})$ be an arbitrary cluster point of the sequence $\{(x_k, y_k)\}$. Since we have shown that the sequence is bounded, then there must exist a subsequence $\{(x_{k_i}, y_{k_i})\}$, such that $\lim_{i \to \infty} (x_{k_i}, y_{k_i})  = (\hat{x}, \hat{y})$. We wish to prove that $(\hat{x}, \hat{y})$ is a saddle point of ~\eqref{eq:prob-initial}.

More precisely, we wish to prove that $P_{\hat{x}, \hat{y}}(x) \geq 0$ and $D_{\hat{x}, \hat{y}}(y)\geq 0$ for $\forall x,\,y$, respectively. For convenience, we remind the reader the definitions of these two quantities:
\begin{align*}
    & P_{\hat{x}, \hat{y}}(x) &&\hspace{-40mm}= f(x) - f(\hat{x}) + \langle A(x - \hat{x}), \hat{y}\rangle,\\
    & D_{\hat{x}, \hat{y}}(y) &&\hspace{-40mm}= g^*(y) - g^*(\hat{y}) - \langle A\hat{x}, y - \hat{y} \rangle.
\end{align*}

We start with $ P_{\hat{x}, \hat{y}}(x)$:
\begin{align}
    P_{\hat{x}, \hat{y}}(x) &= f(x) - f(\hat{x}) + \langle A(x - \hat{x}), \hat{y}\rangle \nn\\
                            &=\lim_{i\to\infty} f(x) - f(x_{k_i}) + \langle A(x - x_{k_i}), y_{k_i}\rangle \tag{Continuity  of $f$}\\
                            &\geq \lim_{i\to\infty}  \langle \nabla f(x_{k_i}) + A^*y_{k_i + 1}, x - x_{k_i} \rangle + \langle A^* (y_{k_i} - y_{k_i + 1}) , x - x_{k_i}\rangle \tag{Convexity  of $f$}\\
                            &= \lim_{i\to\infty}  \langle \frac{x_{k_i + 1} - x_{k_i}}{\tau_{k_i}}, x - x_{k_i} \rangle + \langle A^* (y_{k_i} - y_{k_i + 1}) , x - x_{k_i}\rangle \tag{Primal update rule}\\
                            &= 0.\tag{By \eqref{eq:tauk-unif-lowerbd}, \eqref{eq:cauchy}}
\end{align}

Showing the analogous result for $D_{\hat{x}, \hat{y}}(y)$ relies on similar arguments, with the additional requirement that $\theta_k$ is uniformly upper bounded. From the update rule of $\tau_k$ we have:
\begin{align}
\theta_{k} = \frac{\tau_k}{\tau_{k-1}} \leq \sqrt{1 + \theta_{k-1}} \implies \theta_{k} \leq \sqrt{1+\ldots + \sqrt{1 + \theta_{2}}} \leq \underbrace{\sqrt{1+\ldots + \sqrt{1 + 1}}}_{k-2 \text{ times}} \leq 2, \label{eq:theta-bound}
\end{align}
where the second to last inequality comes from the way APDA's first two iterations are set up. 

Therefore, we have that $\forall y \in \Y$:
\vspace{2mm}
\begin{align}
D_{\hat{x}, \hat{y}}(y) &= g^*(y) - g^*(\hat{y}) - \langle A\hat{x}, y - \hat{y} \rangle \nn\\
                            &\geq  g^*(y) - \liminf_{i\to\infty} g^*(y_{k_i}) - \langle A\liminf_{i\to\infty}x_{k_i}, y - \liminf_{i\to\infty}y_{k_i}\rangle \tag{l.s.c. of $g^*$}\\
                            &= \limsup_{i\to\infty} g^*(y) - g^*(y_{k_i}) - \langle A x_{k_i}, y - y_{k_i}\rangle \nn\\
                            &\geq \limsup_{i\to\infty} \langle\frac{y_{k_i - 1} - y_{k_i}}{\sigma_{k_i - 1}}, y - y_{k_i}\rangle + \langle A ( \tilde{x}_{k_i - 1} - x_{k_i}) , y - y_{k_i} \rangle \tag{Poperty~\eqref{eq:prox-property}}\\
                           &= \limsup_{i\to\infty} \langle\frac{y_{k_i - 1} - y_{k_i}}{\beta\tau_{k_i - 1}}, y - y_{k_i}\rangle + \langle A \left[ x_{k_i - 1} - x_{k_i} +\theta_{k_i - 1}( x_{k_i - 1} - x_{k_i - 2}) \right] , y - y_{k_i} \rangle\nn \\
                           &= 0. \tag{By \eqref{eq:tauk-unif-lowerbd}, \eqref{eq:cauchy}, \eqref{eq:theta-bound}} 
\end{align}

\textbf{3) Gap rate.}
Unrolling the inequality of Lemma~\ref{lem:recurrence-adaptive2} for some $(x, y) \in B_1\times B_2$, we obtain:
\begin{align}
\normsqr{x_{k+1} - x} + \frac{1}{\beta} \normsqr{y_{k+1} -  x} + \left(1 -\eta_k\tau_k\norm{A} -\tau_k L_k \right)\normsqr{ x_{k+1} - x_k}&\nn\\
&\hspace{-110mm}+ \sum\limits_{i=1}^{k-1} \left(\frac{1}{2} -\eta_i\tau_i\norm{A} -\tau_i L_i \right)\normsqr{x_{i+1} - x_i } + \frac{c}{\beta} \sum\limits_{i=1}^k\normsqr{y_{i+1} - y_i}  + 2 \tau_k(1+\theta_k)  P_{x, y}(x_{k})   \nn\\
&\hspace{-100mm} + 2\sum\limits_{i=2}^{k-1} \left(\tau_i(1+\theta_i) - \tau_{i+1}\theta_{i+1}\right)  P_{x, y}(x_{i}) + 2\sum\limits_{i=1}^k\tau_iD_{x, y}(y_{i+1})\nn\\[2mm]
&\hspace{-90mm}\leq \normsqr{x_{1} - x} +  \frac{1}{\beta}\normsqr{y_1 - y} + \frac{1}{2}\normsqr{x_1 - x_{0}} \label{eq:new-gap-rate}. 
\end{align}

First, note that due to $\theta_1 = 0$, the following holds:
\begin{align}
\tau_k(1+\theta_k) + \sum\limits_{i=1}^{k-1}\left( \tau_i(1+\theta_i)  - \tau_{i+1}\theta_{i+1}\right) = \sum\limits_{i=1}^{k} \tau_i =: S_k. \nn
\end{align}

Second, since all the terms on the LHS of ~\eqref{eq:new-gap-rate} except those involving $P_{x, y}(\cdot)$ and $D_{x, y}(\cdot)$ are non-negative and, for fixed $(x, y) \in \X \times \Y$ the functions $P_{x, y}(\cdot)$ and $D_{x, y}(\cdot)$ are convex, we have:
\begin{align}
2 S_k\left( P_{x, y}(X_{k}) + D_{x, y}(Y_k)\right) &\nn\\
&\hspace{-40mm}\leq 2 \tau_k(1+\theta_k)  P_{x, y}(x_{k}) + 2\sum\limits_{i=2}^{k-1} \left(\tau_i(1+\theta_i) - \tau_{i+1}\theta_{i+1}\right)  P_{x, y}(x_{i}) + 2\sum\limits_{i=1}^k\tau_iD_{x, y}(y_{i+1})\nn\\[2mm]
&\hspace{-40mm}\leq \normsqr{x_{1} - x} +  \frac{1}{\beta}\normsqr{y_1 - y} + \frac{1}{2}\normsqr{x_1 - x_{0}}. 
\end{align}

Lastly, since $\tau_k \geq \frac{1}{2\sqrt{L^2 + (\beta/(1-c)) \normsqr{A}}}, \; \forall k$, we have that $S_k \geq \frac{k}{2\sqrt{L^2 + (\beta/(1-c)) \normsqr{A}}}$ and the rate for the restricted gap is:

\begin{align*}
\G_{B_1 \times B_2}(X_k, Y_k)&\\
 &\hspace{-15mm}= \sup\limits_{(x, y) \in B_1 \times B_2} P_{x, y} (X_k)+ D_{x, y}(Y_k) \\
                    &\hspace{-15mm}\leq \sup\limits_{(x, y) \in B_1 \times B_2} \frac{\left(\normsqr{x_{1} - x} +  \frac{1}{\beta}\normsqr{y_1 - y} + \frac{1}{2}\normsqr{x_1 - x_{0}}\right) \sqrt{L^2 + (\beta/(1-c))\normsqr{A}}}{k} \\
                    &\hspace{-15mm}=\frac{M(B_1, B_2) \sqrt{L^2 + (\beta/(1-c))\normsqr{A}}}{k},
\end{align*}
which concludes the proof of the theorem. \qedhere
\end{proof}

\subsection{Proof of Theorem~\ref{thm:strong-conv}}
Before proving the result of Theorem~\ref{thm:strong-conv}, a few remarks are in order. First, the boundedness result of Theorem~\ref{thm:conv-base-case} point 1) also holds for constant $c  = 0$, since this constant was required only for proving convergence to a saddle point in point 2) of the theorem. Second, taking a stepsize smaller than the originally considered $\tau_k$ will not change the validity of Lemma~\ref{lem:recurrence-adaptive2} or the boundedness result of Theorem~\ref{thm:conv-base-case}, as it remains within the interval given in~\eqref{eq:tauk-valid-interval}. 

Consequently, for studying APDA under the additional Assumption~\ref{ass:local-strong-convexity} we can simplify the stepsize expression by taking $c = 0$, since now we will prove convergence of the iterates directly by using the strong convexity and full row-rank assumptions. Specifically, we consider $\tau_k$ as defined in~\eqref{eq:new-tau}, which is smaller than the one originally considered and, due to the above remarks it ensures that APDA produces a bounded sequence. It follows that, under the local smoothness and local strong convexity assumptions, there exist constant $L$ and $\mu$ such that $f$ is $L$-smooth and $\mu$-strongly convex over $\overline{\text{Conv}}(\{\xopt, x_0, x_1, \ldots \})$. This observation suffices to show linear convergence in Theorem~\ref{thm:strong-conv}.

\vspace{5mm}
\ThmStrcnvx*
\begin{proof}
The outline of the proof is first arriving at a strengthened version of the inequality in Lemma~\ref{lem:recurrence-adaptive2}, and then  showing  that the inequality expresses a contraction. 

Since this new stepsize still ensures the boundedness result of Theorem~\ref{thm:conv-base-case}, there exist $\mu$ and $L$ such that $f$ is $\mu$-strongly convex and $L$-Lipschitz smooth over the compact set $\overline{\text{Conv}}(\{x^*, x_0, x_1, \ldots \})$.  From these properties it follows that, for all $k$:
\begin{align*}
&2\tau_k \langle\nabla f(x_k), \xopt - x_k \rangle &&\hspace{-10mm}\leq 2\tau_k\left( f(\xopt) - f(x_k)\right) - \mu \tau_k \normsqr{x_k - \xopt}, \\
&2\tau_k \langle\nabla f(x_k), \xopt - x_k \rangle &&\hspace{-10mm}\leq 2\tau_k\left( f(\xopt) - f(x_k)\right) - \frac{\tau_k}{L} \normsqr{\nabla f(x_k) - \nabla f(\xopt)}. 
\end{align*}

Summing these two inequalities and dividing by $2$, we obtain a stronger version of equation~\eqref{eq:adapt2-step1-convexity}:
\begin{align}
    \hspace{-3mm}-2\tau_k\langle \nabla f(x_k) + A^Ty_{k+1}, x_k - \xopt\rangle &\leq 2\tau_k \left(f(\xopt) - f(x_k) \right) -\frac{\tau_k \mu}{2}\normsqr{x_k - \xopt} \nn\\
    &\hspace{-0mm}- \frac{\tau_k}{2L}\normsqr{\nabla f(x_k) - \nabla f(\xopt)} + 2\tau_k\langle A(\xopt - x_k), y_{k+1}\rangle. \label{eq:adapt2-step1-strong-convexity}
\end{align}

We further bound the term $\normsqr{\nabla f(x_k) - \nabla f(\xopt)}$ in~\eqref{eq:adapt2-step1-strong-convexity}:
\begin{align}
\normsqr{\nabla f(\xopt) - \nabla f(x_k)} &= \normsqr{A^T(y_{k+1} - \yopt) -\frac{ x_k - x_{k+1}}{\tau_k} } \label{eq:opt-cond-and-update} \\
                &\geq \normsqr{A^T(y_{k+1} - \yopt)}  + \frac{1}{\tau_k^2}\normsqr{x_{k+1}- x_k} \nn\\
                &\hspace{30mm}- \frac{2}{\tau_k}\norm{A^T(y_{k+1} - \yopt)}\norm{x_{k+1}- x_k}  \label{eq:inverse-triang-ineq} \\
                &\geq \normsqr{A^T(y_{k+1} - \yopt)}  + \frac{1}{\tau_k^2}\normsqr{x_{k+1}- x_k} \nn\\
                &\hspace{10mm}- \left( \frac{1}{\xi + 1}\normsqr{A^T(y_{k+1} - \yopt)} + \frac{\xi + 1}{\tau_k^2}\normsqr{x_{k+1}- x_k}\right) \label{eq:weird-inequality-from-squares}\\
                &\geq \frac{\xi\sigma_{\mathrm{min}}^2(A)}{\xi + 1}\normsqr{y_{k+1} - \yopt}  - \frac{\xi}{\tau_k^2}\normsqr{x_{k+1}- x_k},\label{eq:full-row-rank}
\end{align}
where line ~\eqref{eq:opt-cond-and-update} comes from the primal iterate update rule and the optimality condition~\eqref{eq:xy-optimality};  line~\eqref{eq:inverse-triang-ineq} comes from developing the square and applying Cauchy-Schwarz; line~\eqref{eq:weird-inequality-from-squares} comes from applying Young's inequality with constant $1 + \xi$, where $\xi > 0$; line~\eqref{eq:full-row-rank} comes from the assumption of $A$ having full-row rank, which implies that $\normsqr{A^T(y_{k+1} - \yopt)} \geq \sigma^2_{\mathrm{min}}(A)\normsqr{y_{k+1} - \yopt}$.

Finally, setting $\xi = 2\tau_k^2L_kL$ we obtain that:
\begin{align}
    \hspace{-3mm}-2\tau_k\langle \nabla f(x_k) + A^Ty_{k+1}, x_k - \xopt\rangle &\leq 2\tau_k \left(f(\xopt) - f(x_k) \right) -\frac{\tau_k \mu}{2}\normsqr{x_k - \xopt} \nn\\
    &\hspace{0mm} -\frac{\tau_k^3L_k\sigma_{\mathrm{min}}^2(A)}{1 + 2\tau_k^2L_kL}\normsqr{y_{k+1} - \yopt}  + \tau_kL_k\normsqr{x_{k+1}- x_k}\nn\\
    &\hspace{0mm}+ 2\tau_k\langle A(\xopt - x_k), y_{k+1}\rangle. \label{eq:adapt2-step1-strong-convexity}
\end{align}

Replacing inequality~\eqref{eq:adapt2-step1-convexity} with inequality~\eqref{eq:adapt2-step1-strong-convexity} in the proof of Lemma~\ref{lem:recurrence-adaptive2} and keeping everything else identical, we obtain the a strengthened version of Lemma's~\ref{lem:recurrence-adaptive2} result:
\begin{align}
\normsqr{x_{k+1} - \xopt} + \left(\frac{1}{\beta} + \frac{\tau_k^3L_k\sigma_{\mathrm{min}}^2(A)}{1 + 2\tau_k^2L_kL} \right)\normsqr{ y_{k+1}- \yopt}& + \Bigg(1 -\eta_k\tau_k\norm{A} -2\tau_k L_k\Bigg) \nn\\
&\hspace{-81mm} \normsqr{x_{k+1} - x_k} + \frac{\eta_k- \tau_k\beta\norm{A}}{\beta\eta_k}\normsqr{y_{k+1} - y_k} + 2\tau_k(1+\theta_k)  P_{\xopt, \yopt}(x_{k}) + 2\tau_kD_{\xopt, \yopt}(y_{k+1})\nn\\[2mm]
&\hspace{-83mm}\leq\left( 1 - \frac{\mu\tau_k}{2}\right) \normsqr{x_{k} - \xopt} +  \frac{1}{\beta}\normsqr{y_k - \yopt} + \tau_kL_k\normsqr{x_k - x_{k-1}} +2\tau_k\theta_k P_{\xopt, \yopt}(x_{k-1}). \label{eq:new-energy}
\end{align}

In order to show that this is in fact a contraction, we note a few  properties of  the terms in~\eqref{eq:new-energy}:
\begin{enumerate}[label=\alph*)]
\item It holds that $1 -\eta_k\tau_k\norm{A} -2\tau_k L_k > 1/2$ and $\frac{\eta_k- \tau_k\beta\norm{A}}{\beta\eta_k} > 0 $ since:
\begin{align*}
\begin{cases}
1 -\eta_k\tau_k\norm{A} -2\tau_k L_k > 1/2 \\[2mm]
\frac{\eta_k- \tau_k\beta\norm{A}}{\beta\eta_k} > 0
\end{cases} &\iff \begin{cases}
\eta_k < \frac{1}{2\tau_k\norm{A} }- \frac{2L_k}{\norm{A}} \\[2mm]
\eta_k > \tau_k\beta\norm{A} 
\end{cases} \\[2mm]
&\iff 2\beta\norm{A}^2\tau_k^2 + 4L_k\tau_k -1 < 0,
\end{align*}
which holds for any $\displaystyle \tau_k \in \left(0, \frac{1}{2L_k + \sqrt{4L_k^2 + 2\beta\normsqr{A}} }\right)$. Our choice of $\tau_k$ belongs to this interval, and therefore ensures the stated properties;
\item It holds that $\tau_kL_k < 1/4$, by the same observation as that in~\eqref{eq:tauk-lk-less-than-1-over-2} but with  a different limit constant given by the new stepsize;
\item It holds that $\tau_k\theta_k \leq \tau_{k-1}\sqrt{1 + \theta_{k-1}/2}\,\theta_k\leq \tau_{k-1}(1 + \theta_{k-1}/2)$, by the definitions of $\tau_k$ and $\theta_k$;
\item It holds that:
\begin{equation}
\frac{1}{2\sqrt{4L^2 + \beta\normsqr{A}}} \leq \tau_k \leq \frac{1}{2\sqrt{4\mu^2 + \beta\normsqr{A}}},
\end{equation}
by the existence of $\mu$ and $L$ over $\overline{\text{Conv}}(\{x^*, x_0, x_1, \ldots \})$ and a similar argument to that in~\eqref{eq:tauk-unif-lowerbd}, plus the fact that under strong convexity $\norm{\nabla f(x) - \nabla f (y)} \geq \mu\norm{x - y}$;
\item It holds that:
\begin{equation}
\frac{\mu}{2\sqrt{4\mu^2 + \beta\normsqr{A}}} \leq \tau_kL_k \leq \frac{L}{2\sqrt{4L^2 + \beta\normsqr{A}}},
\end{equation}
by a similar argument to that in~\eqref{eq:taukLk-boundedness}.
\end{enumerate}
Using properties a), b), c) in the list above and ignoring the positive terms on the LHS that do not have a correspondent on the RHS of~\eqref{eq:new-energy}, the main inequality becomes:
\begin{align}
\normsqr{x_{k+1} - \xopt} + \left(\frac{1}{\beta} + T \right)\normsqr{ y_{k+1}- \yopt}+\frac{1}{2}\normsqr{x_{k+1} - x_k} + 2\tau_k(1+\theta_k)  P_{\xopt, \yopt}(x_{k})&\nn\\[2mm]
&\hspace{-110mm}\leq\left( 1 - \frac{\mu\tau_k}{2}\right) \normsqr{x_{k} - \xopt} +  \frac{1}{\beta}\normsqr{y_k - \yopt} + \frac{1}{2}\left(1 - \frac{1}{2}\right)\normsqr{x_k - x_{k-1}} \nn\\
&\hspace{-55mm} +2\tau_{k-1}(1 + \theta_{k-1}/2) P_{\xopt, \yopt}(x_{k-1}), \label{eq:new-energy-1}
\end{align}
where $T$ is given by: 
\begin{align}
T &:= \frac{\sigma^2_{\mathrm{min}}(A)\mu}{8s^2t + 4L^2s} \nn\\
&=\frac{\sigma^2_{\mathrm{min}}(A)\mu}{8(4L^2 + \beta \normsqr{A})\sqrt{4\mu^2 + \beta\normsqr{A}} + 4L^2\sqrt{4L^2 + \beta\normsqr{A}}} \nn \\
&\leq  \frac{\tau_k^3L_k\sigma_{\mathrm{min}}^2(A)}{1 + 2\tau_k^2L_kL}, \tag{by d) and e) above}
\end{align}
where we used the definitions of $\displaystyle s = \sqrt{4L^2 + \beta \normsqr{A}}$ and $\displaystyle t = \sqrt{4\mu^2 + \beta \normsqr{A}}$ to simplify notations.

We thus have the following contractions in ~\eqref{eq:new-energy-1}:
\begin{itemize}
    \item For $\frac{1}{2}\normsqr{x_{k+1} - x_k}$ it  is: $1 - \underbrace{\frac{1}{2}}_{=:p}$;
    \item For  $\left(\frac{1}{\beta} + T \right)\normsqr{ y_{k+1}- \yopt}$ it is:
    \begin{align*}
        \hspace{-80mm} 1 + \frac{1}{1 + T\beta} &= 1 - \frac{T\beta}{1 + T\beta} \\
            &= 1 - \underbrace{\frac{\beta\sigma^2_{\mathrm{min}}(A)\mu}{\sigma^2_{\mathrm{min}}(A)\mu +8s^2t + 4L^2s}}_{=: r}
    \end{align*}
    \item  For $\normsqr{x_{k+1} - \xopt}$ it is:\\
                \begin{equation*}
        \hspace{-55mm}1 - \frac{\mu\tau_k}{2} \leq 1 - \underbrace{\frac{\mu}{4s}}_{=:q}
                    \end{equation*}
     \item For $2\tau_k(1+\theta_k)  P_{\xopt, \yopt}(x_{k})$ it is:
     \begin{align*}
        \hspace{-10mm}\frac{1 + \theta_{k-1}/2}{1 + \theta_{k-1}} &= 1 - \frac{\theta_{k-1}}{2(1 + \theta_{k-1})} \\
             &\leq 1 - \frac{t}{s} \tag{\text{By def. of $\theta_{k-1}$ and property 5.}}
     \end{align*}
\end{itemize}

Note that for the latter two contractions above, it always holds that  $\mu/(4s) < t/s$ so in the final bound we can ignore the latter. Finally, denoting the LHS of inequality~\eqref{eq:new-energy-1} as $E_{k+1}$, we have that:
\begin{align*}
    E_{k+1} \leq \left( 1 - \min \left\{p, q, r\right\}\right)^{k+1}M.
\end{align*}
where $M = E_2$ and  we used the fact  that  $\theta_1 = 0$. \qedhere
%


\end{proof}

\end{document}